\documentclass[11pt,reqno]{amsart}

\usepackage[T1]{fontenc}
\usepackage[utf8]{inputenc}
\usepackage{lmodern}
\usepackage{amsmath,amssymb,amsfonts,amsthm,mathtools}
\usepackage{geometry}
\usepackage{microtype}
\usepackage{hyperref}
\usepackage{aliascnt}
\usepackage[nameinlink,noabbrev]{cleveref}

\hypersetup{%
  colorlinks=true,
  linkcolor=blue,
  citecolor=blue,
  urlcolor=blue,
  pdftitle={A Weighted Bossel--Daners Transfer Principle and a Pure-Power Robin Faber--Krahn Inequality},
  pdfauthor={Tan Duc Do, Nguyen Ngoc Trong, Nguyen Ngoc Huy Truong},
  pdfsubject={Weighted Robin eigenvalue comparison and pure-power Faber--Krahn inequality},
  pdfkeywords={Robin eigenvalue, Faber--Krahn inequality, Bossel--Daners method, weighted perimeter, power weights}
}
\allowdisplaybreaks
\numberwithin{equation}{section}

\newtheorem{theorem}{Theorem}[section]
\newaliascnt{lemma}{theorem}
\newtheorem{lemma}[lemma]{Lemma}
\aliascntresetthe{lemma}
\newaliascnt{proposition}{theorem}
\newtheorem{proposition}[proposition]{Proposition}
\aliascntresetthe{proposition}
\newaliascnt{corollary}{theorem}
\newtheorem{corollary}[corollary]{Corollary}
\aliascntresetthe{corollary}
\theoremstyle{definition}
\newaliascnt{definition}{theorem}
\newtheorem{definition}[definition]{Definition}
\aliascntresetthe{definition}
\newaliascnt{assumption}{theorem}
\newtheorem{assumption}[assumption]{Assumption}
\aliascntresetthe{assumption}
\theoremstyle{remark}
\newaliascnt{remark}{theorem}
\newtheorem{remark}[remark]{Remark}
\aliascntresetthe{remark}

\crefname{theorem}{theorem}{theorems}
\Crefname{theorem}{Theorem}{Theorems}
\crefname{lemma}{lemma}{lemmas}
\Crefname{lemma}{Lemma}{Lemmas}
\crefname{proposition}{proposition}{propositions}
\Crefname{proposition}{Proposition}{Propositions}
\crefname{corollary}{corollary}{corollaries}
\Crefname{corollary}{Corollary}{Corollaries}
\crefname{definition}{definition}{definitions}
\Crefname{definition}{Definition}{Definitions}
\crefname{assumption}{assumption}{assumptions}
\Crefname{assumption}{Assumption}{Assumptions}
\crefname{remark}{remark}{remarks}
\Crefname{remark}{Remark}{Remarks}

\newcommand{\R}{\mathbb R}
\newcommand{\Haus}{\mathcal H}

\title[A weighted Robin Bossel--Daners transfer]{A Weighted Bossel--Daners Transfer Principle and a Pure-Power Robin Faber--Krahn Inequality}
\author[T. D. Do]{Tan Duc Do}
\address{$^{1}$Faculty of Applied Sciences, HCMC University of Industry and Trade\\
140 Le Trong Tan Street, Tan Phu District, Ho Chi Minh City, Vietnam.}
\email{\textcolor[rgb]{0.00,0.00,0.84}{tanducdo.math@gmail.com}}

\author[N. N. Trong]{Nguyen Ngoc Trong}
\address{$^{2}$Group of Analysis and Applied Mathematics, Department of Mathematics, Ho Chi Minh City University of Education, Vietnam.}
\email{\textcolor[rgb]{0.00,0.00,0.84}{trongnn@hcmue.edu.vn}}

\author[N. N. H. Truong]{Nguyen Ngoc Huy Truong$^*$}\thanks{$^*$Corresponding author}
\address{$^{3}$Ho Chi Minh City University of Education, Vietnam.}
\email{\textcolor[rgb]{0.00,0.00,0.84}{hitruongofficial@gmail.com}}
\date{}

\subjclass[2020]{35P15, 35J92, 35J70, 49Q20}
\keywords{Robin eigenvalue, Faber--Krahn inequality, Bossel--Daners method, weighted perimeter, weighted isoperimetry, power weights, double-density isoperimetry}

\begin{document}

\begin{abstract}
We prove a weighted Bossel--Daners transfer principle for the first Robin eigenvalue of the $p$-Laplacian under $w=m^{1/p'}$, where $p'=p/(p-1)$. The argument combines double-density isoperimetry with spectral admissibility for singular weights and normalized-flux monotonicity. It uses an exact $BV$ zero-extension formula, an $L^{p'}(m\,dx)$ selection lemma, and a nonatomic rank map that remains well defined on positive-measure level sets.

For the singular pair
\[
m_b(x)=|x|^b,
\qquad
w_b(x)=|x|^{b/p'},
\]
known power-weight isoperimetry provides the geometric input. We establish spectral admissibility up to the critical exponent $p=N$ and, for the positive radial first eigenfunction $z=z(r)$ on the centered ball $B_R$, derive the integrated singular radial equation and center asymptotics and prove directly that
\[
\theta_R'(r)>0
\quad(0<r<R),
\qquad
\theta_R(r)=\frac{|z'(r)|^{p-1}}{r^{b/p'}z(r)^{p-1}},
\]
with $\theta_R(0)=0$ and $\theta_R(R)=\beta$. Let $\Omega\subset\R^N$ be a finite union of bounded connected Lipschitz domains whose closures are pairwise disjoint, let $\Omega_b^\sharp$ be the centered ball of equal $|x|^b$-weighted volume, and let $\lambda_{1,\beta}^b$ denote the first eigenvalue for the displayed pair. Consequently,
\[
\lambda_{1,\beta}^{b}(\Omega_b^\sharp)
\le
\lambda_{1,\beta}^{b}(\Omega)
\]
whenever $N\ge2$, $1<p\le N$, $-p<b<0$, and $\beta>0$. The range $1<p<N$ is complementary to the previously known $p\ge N$ weighted Talenti theory; at the shared endpoint $p=N$, the present proof also permits the singular contact $0\in\partial\Omega$.
\end{abstract}

\maketitle

\section{Introduction}

Let $N\ge2$, $1<p<\infty$, $\beta>0$, and let $\Omega\subset\R^N$ be a bounded Lipschitz open set. Throughout, a domain is connected, while a bounded Lipschitz open set may be disconnected and is understood to be a finite union of bounded connected Lipschitz domains whose closures are pairwise disjoint. The first Robin eigenvalue of the $p$-Laplacian is
\[
\lambda_{1,\beta}(\Omega)
:=
\inf_{0\ne u\in W^{1,p}(\Omega)}
\frac{\displaystyle\int_\Omega|\nabla u|^p\,dx
+\beta\int_{\partial\Omega}|\operatorname{Tr}u|^p\,d\Haus^{N-1}}
{\displaystyle\int_\Omega|u|^p\,dx}.
\]
The Robin Faber--Krahn inequality states that balls minimize this eigenvalue under a volume constraint. Bossel proved the planar linear result, and Daners established the inequality in arbitrary dimension \cite{Bossel,Daners}. For the nonlinear $p$-Laplacian, Bucur and Daners introduced a flexible weak level-set proof, while Dai and Fu obtained the comparison independently by a different route \cite{BucurDaners,DaiFu}. The level-set method is especially useful for Robin problems because it avoids a Robin P\'olya--Szeg\H{o} inequality for arbitrary Sobolev functions.

Weighted Robin spectral inequalities are substantially more delicate because the weights governing diffusion, volume, and boundary interaction need not coincide. Chiacchio and Gavitone treated the Hermite operator with Robin boundary conditions \cite{ChiacchioGavitone}. Amato, Chiacchio, and Gentile studied weighted $p$-Poisson and Robin eigenvalue comparisons with variable boundary coefficients \cite{ACG}. Their eigenvalue theorem assumes $p\ge N$, $-N<b<0$, and $0\notin\partial\Omega$; it is therefore complementary to the range proved here when $1<p<N$ and overlaps it only at $p=N$, subject to their additional exclusion of boundary contact with the origin. Chen, Li, and Wei recently developed the Bossel--Daners method for the Robin $p$-Laplacian on complete Riemannian manifolds \cite{ChenLiWei}. Chen, Wang, and Zhu proved weighted linear Robin comparisons for smooth radially log-convex densities under additional conditions on the density or the Robin parameter \cite{ChenWangZhu}; in their variational structure the same density weights diffusion, volume, and boundary interaction.

The representation--selection architecture itself is not new: in the linear same-density setting, Propositions~3.1--3.3 of \cite{ChenWangZhu} provide the superlevel representation, differential comparison, and level selection. Here it is adapted to a nonlinear compatible pair with unweighted diffusion, $BV$ traces, an $L^{p'}(m\,dx)$ selection argument, and a rank construction that is insensitive to plateaus. No unique-continuation or regularity result available under our rough-weight hypotheses is used to exclude positive-measure level sets, and we make no assertion that such plateaus actually occur.

The present paper addresses a different weighted structure. Let
\[
d\mu=m(x)\,dx
\]
be a radial weighted volume measure, put $p'=p/(p-1)$, and set
\[
w=m^{1/p'}.
\]
Thus this is one-density data in a compatible form: $m$ determines both the volume density and, through $m^{1/p'}$, the boundary density, while diffusion remains unweighted. For $m_b(x)=|x|^b$ with $b<0$, the interior operator is of Hardy type.
For a bounded Lipschitz domain on which the integrals are finite, define
\[
\lambda_{1,\beta}^{m,w}(\Omega)
:=
\inf_{\substack{u\in W^{1,p}(\Omega)\\ \int_\Omega m|u|^p\,dx>0}}
\frac{\displaystyle\int_\Omega|\nabla u|^p\,dx
+\beta\int_{\partial\Omega}w|\operatorname{Tr}u|^p\,d\Haus^{N-1}}
{\displaystyle\int_\Omega m|u|^p\,dx}.
\]
The relation $w=m^{1/p'}$ is dictated by the level-set H\"older estimate
\begin{equation}
\left(\int_{\{u=t\}}m^{1/p'}\,d\Haus^{N-1}\right)^p
\le
\left(\int_{\{u=t\}}|\nabla u|^{p-1}\,d\Haus^{N-1}\right)
\left(\int_{\{u=t\}}\frac{m}{|\nabla u|}\,d\Haus^{N-1}\right)^{p-1}.
\label{eq:intro-coarea-holder}
\end{equation}
Thus the associated weighted perimeter of a finite-perimeter set $E$ is
\[
P_w(E)=\int_{\partial^*E}w\,d\Haus^{N-1},
\]
where $\partial^*E$ denotes the reduced boundary. If $E^\sharp$ is the centered ball satisfying $\mu(E^\sharp)=\mu(E)$, the geometric input is
\[
P_w(E)\ge P_w(E^\sharp).
\]

\subsection{The weighted transfer}
Our central contribution is an abstract transfer theorem based on three inputs:
\[
\boxed{\text{compatible isoperimetry}
+\text{spectral admissibility}
+\text{regular monotone radial flux}.}
\]
On the centered comparison ball $B_R$, let $z=z(r)$ be the positive radial first eigenfunction. Writing $\mathfrak w(r)$ for the radial profile of $w$, set
\[
\theta_R(r)=\frac{|z'(r)|^{p-1}}{\mathfrak w(r)z(r)^{p-1}}.
\]
Besides continuity and monotonicity of $\theta_R$, the ball identity requires an integrated radial equation and a relative center-decay condition for the radial flux; the endpoint values $\theta_R(0)=0$ and $\theta_R(R)=\beta$ are then consequences. If the three displayed inputs hold together with the natural finite boundary-integrability conditions, we prove
\[
\lambda_{1,\beta}^{m,w}(\Omega^\sharp)
\le
\lambda_{1,\beta}^{m,w}(\Omega).
\]
The proof contains three points that are not formal consequences of the unweighted argument. First, the weighted selection lemma is established under the natural condition $\varphi\in L^{p'}(\Omega,d\mu)$. Second, the superlevel perimeter must include the boundary trace exactly:
\[
P_w(\{u>t\})
=
\int_{\partial^*\{u>t\}\cap\Omega}w\,d\Haus^{N-1}
+
\int_{\{\operatorname{Tr}u>t\}\cap\partial\Omega}w\,d\Haus^{N-1}
\]
for almost every $t$, where $\operatorname{Tr}u$ is the Sobolev trace. Third, the usual distribution-function parametrization is ambiguous on positive-measure level sets. We replace it with a nonatomic rank map that treats every plateau, including a possible maximum plateau.

When $m=w=1$, the Euclidean isoperimetric inequality and the radial equation verify the assumptions. The connected-domain comparison, the component identity
\[
\lambda_{1,\beta}(\Omega)=\min_j\lambda_{1,\beta}(\Omega_j)
\]
and strict decrease of $R\mapsto\lambda_{1,\beta}(B_R)$ give a consistency check with the Bucur--Daners inequality under our standing finite-component convention. Stronger formulations for finite-perimeter and free-discontinuity classes are already known \cite{BucurGiacomini2010,BucurGiacomini2015,BucurFreitasKennedy} and are not recovered by the present Lipschitz transfer.

\subsection{The pure-power theorem and its difficulties}
Our concrete application is the singular compatible pair
\begin{equation*}
m_b(x)=|x|^b,\qquad w_b(x)=|x|^{b/p'}.
\end{equation*}
The compatible geometric inequality is
\begin{equation*}
\int_{\partial^*E}|x|^{b/p'}\,d\Haus^{N-1}
\ge
\int_{\partial E_b^\sharp}|x|^{b/p'}\,d\Haus^{N-1},
\end{equation*}
where $E_b^\sharp$ is the centered ball having the same $|x|^b$-weighted volume as $E$. This geometric inequality is a specialization of the power-weight theory of Alvino, Brock, Chiacchio, Mercaldo, and Posteraro \cite{ABCMP}; no geometric novelty is claimed here.

The radial map
\[
T(x)=|x|^{\gamma-1}x,\qquad \gamma=\frac{N+b}{N},
\]
has Jacobian determinant $\det DT(x)=\gamma|x|^b$ and reduces the compatible perimeter to the increasing power perimeter with exponent
\[
a_{N,p,b}=\frac{b(p-N)}{p(N+b)}.
\]
For $1<p\le N$ and $-N<b<0$, one has $a_{N,p,b}\ge0$. The power-weight theorem of \cite{ABCMP} covers every such nonnegative exponent; the narrower condition $b>-p$ is imposed by spectral compactness and the radial center scaling, not by the geometry.

If $b>0$ and $p<N$, the same transformation gives $a_{N,p,b}<0$, outside the centered increasing-power inequality used here. This is consistent with the possibility of symmetry breaking in related H\'enon-type problems; see \cite{SmetsSuWillem,Serra}.

The analytic verification has two separate singularities. The volume density may diverge at the origin, while the boundary density may diverge at a contact point $0\in\partial\Omega$. We prove compactness, boundedness, positivity, and simplicity of the first eigenfunction for the full range $1<p\le N$ and $-p<b<0$. On the centered ball, the radial eigenfunction satisfies
\[
r^{N-1}|z'(r)|^{p-1}
=
\lambda_R\int_0^r s^{N-1+b}z(s)^{p-1}\,ds.
\]
The absence of an integration constant here is exactly where $p\le N$ enters the radial argument. The identity gives the center asymptotic
\[
\theta_R(r)=\frac{\lambda_R}{N+b}r^{1+b/p}+o(r^{1+b/p}).
\]
Moreover,
\[
\theta_R'(r)
=
r^{b/p}\bigl[\lambda_R+(p-1)\theta_R(r)^{p'}\bigr]
-
\left(N-1+\frac b{p'}\right)\frac{\theta_R(r)}r.
\]
After the change of variable $t=r^{1+b/p}/(1+b/p)$, every positive critical point of the transformed flux would be a strict local minimum. Since the derivative is positive near the origin, a first zero cannot occur. Hence $\theta_R'>0$ on $(0,R)$ and the Robin condition gives $\theta_R(R)=\beta$.

For every bounded Lipschitz open set $\Omega\subset\R^N$ satisfying the standing finite-component convention, let $\Omega_b^\sharp$ be the centered ball of equal $|x|^b$-weighted volume and define
\begin{equation*}
\lambda_{1,\beta}^{b}(\Omega)
:=
\inf_{\substack{u\in W^{1,p}(\Omega)\\ \int_\Omega|x|^b|u|^p\,dx>0}}
\frac{\displaystyle\int_\Omega|\nabla u|^p\,dx
+\beta\int_{\partial\Omega}|x|^{b/p'}|\operatorname{Tr}u|^p\,d\Haus^{N-1}}
{\displaystyle\int_\Omega|x|^b|u|^p\,dx}.
\end{equation*}
Combining the abstract transfer, the compatible power-weight geometry, spectral admissibility, and the radial flux analysis, we prove
\begin{equation*}
\boxed{\lambda_{1,\beta}^{b}(\Omega_b^\sharp)
\le
\lambda_{1,\beta}^{b}(\Omega)}
\end{equation*}
for every $\beta>0$ and
\[
N\ge2,\qquad 1<p\le N,\qquad -p<b<0.
\]

\subsection{Organization}
The paper is organized as follows. The next section introduces weighted volume and comparison balls. It is followed by the finite-exponent spectral-admissibility criterion, the geometric hypothesis, and the weighted transfer with its fixed-volume consequence. We then check consistency with the unweighted Lipschitz theory, verify the pure-power geometry, compare the result precisely with \cite{ACG}, and finally prove the radial flux result and the pure-power inequality.

\section{Weighted volume, comparison balls, and compatible perimeter}

Throughout the paper,
\[
N\ge2,\qquad 1<p<\infty,\qquad p'=\frac{p}{p-1}.
\]

\subsection{Notation and standing conventions}

We write $\mathcal L^k$ for $k$-dimensional Lebesgue measure and $\Haus^k$ for $k$-dimensional Hausdorff measure. In particular, $|E|:=\mathcal L^N(E)$ for a Lebesgue-measurable set $E\subset\R^N$. The open ball of radius $R>0$ centered at the origin is $B_R$, $\mathbb S^{N-1}:=\partial B_1$, and
\[
\omega_N:=|B_1|,
\qquad
\sigma_N:=\Haus^{N-1}(\mathbb S^{N-1})=N\omega_N.
\]
If $E$ and $F$ are sets, $\chi_E$ is the characteristic function of $E$ and $E\triangle F$ is their symmetric difference. Statements holding almost everywhere in $\Omega$ refer to $\mathcal L^N$, while almost-everywhere statements on $\partial\Omega$ refer to $\Haus^{N-1}$.

For a Radon measure $\nu$, the restriction of $\nu$ to a Borel set $A$ is denoted by $\nu\lfloor A$. If $F$ is Borel measurable, $F_\#\nu$ is the pushforward measure, defined by $(F_\#\nu)(A)=\nu(F^{-1}(A))$. The notation $d\mu=m\,dx$ means that $\mu$ is the Radon measure with density $m$ relative to $\mathcal L^N$; since $m\in L^1_{\mathrm{loc}}$, one has $\mu\ll\mathcal L^N$.

The spaces $L^q(\Omega)$, $W^{1,p}(\Omega)$, and $BV(\Omega)$ have their standard meanings; $L^q(\Omega,d\mu)$ denotes the corresponding space with respect to $\mu$. If $E$ has finite perimeter, $\partial^*E$ is its reduced boundary, $\nu_E$ is its measure-theoretic outward unit normal, $D\chi_E$ is the distributional derivative of $\chi_E$, and
\[
P(E):=|D\chi_E|(\R^N)=\Haus^{N-1}(\partial^*E)
\]
is its ordinary De Giorgi perimeter in $\R^N$. For a bounded Lipschitz domain $\Omega$, $\nu_\Omega$ is the outward unit normal and $\operatorname{Tr}u$ is the Sobolev or $BV$ trace of $u$ on $\partial\Omega$, as appropriate. The symbol $\widetilde u$ denotes the precise representative of a Sobolev function. Norms in $L^\infty$ are essential-supremum norms.

We write
\[
\Delta_pu:=\operatorname{div}(|\nabla u|^{p-2}\nabla u)
\]
for the $p$-Laplacian. For a classically differentiable function, $\partial_\nu u:=\nabla u\cdot\nu_\Omega$ is its outward normal derivative; boundary conditions in the weak setting are always understood through the displayed variational identity.

For an exponent $s\in(1,\infty)$, $s':=s/(s-1)$ is its H\"older conjugate. When $1<p<N$, we use
\[
p^*:=\frac{Np}{N-p},
\qquad
p_{\partial}:=\frac{p(N-1)}{N-p}
\]
for the Sobolev and trace critical exponents. A radial density is written $m(x)=\mathfrak m(|x|)$, with boundary profile $\mathfrak w(r):=\mathfrak m(r)^{1/p'}$. A radial function $z$ is written $z(x)=Z(|x|)$ when the distinction between the function on $\R^N$ and its profile is useful. A prime denotes differentiation with respect to the displayed radial variable. Constants denoted by $C$, possibly with subscripts, are positive and may change from line to line unless explicitly fixed.

\subsection{Radial weighted volume}
We first impose the standing condition on the radial volume density.
\begin{assumption}
\label{ass:radial-volume-density}
Let $m:\R^N\to[0,\infty]$ be radial and Borel measurable. We assume
\[
m\in L^1_{\mathrm{loc}}(\R^N)
\]
and that
\[
V(R):=\int_{B_R}m(x)\,dx
\]
is finite and strictly increasing for $R>0$, with
\[
V(0)=0,\qquad \lim_{R\to\infty}V(R)=\infty.
\]
\end{assumption}

Under \Cref{ass:radial-volume-density}, $V$ is continuous on $[0,\infty)$. Indeed, if $m(x)=\mathfrak m(|x|)$, then
\[
V(R)=\sigma_N\int_0^R\mathfrak m(r)r^{N-1}\,dr,
\]
which also proves the asserted continuity.
For a measurable set $E\subset\R^N$, define
\[
\mu(E):=\int_E m(x)\,dx.
\]
Then $\mu$ is a nonatomic Radon measure, finite on bounded sets.

The next observation explains how singular densities are interpreted.
\begin{remark}
The density may be singular on a Lebesgue-null set. In particular, $m_b(x)=|x|^b$, $-N<b<0$, may be assigned the value $+\infty$ at the origin without changing $\mu$.
\end{remark}

\subsection{Centered comparison balls}
Since $V$ is continuous and strictly increasing from $0$ to $\infty$, it has a continuous inverse
\[
\varrho:[0,\infty)\to[0,\infty),\qquad \varrho(0):=0,\quad \varrho(v):=V^{-1}(v)\ \ (v>0).
\]

The centered weighted rearrangement is defined through the preceding volume coordinate.
\begin{definition}
If $E\subset\R^N$ is measurable and $0<\mu(E)<\infty$, define
\[
E^\sharp:=B_{\varrho(\mu(E))}.
\]
Thus $\mu(E^\sharp)=\mu(E)$. For $v>0$, write
\[
B_v^\sharp:=B_{\varrho(v)}.
\]
Thus $\varrho=V^{-1}$ is the volume-to-radius function, whereas $R$ in $B_R$ denotes a fixed scalar radius.
\end{definition}

The volume-to-radius correspondence has the following elementary properties.
\begin{lemma}
\label{lem:radial-volume-coordinate}
Fix $R_0>0$ and define $\mathcal V(x):=V(|x|)$ on $B_{R_0}$. Then
\[
\mathcal V_\#(\mu\lfloor B_{R_0})=\mathcal L^1\lfloor(0,V(R_0)).
\]
Equivalently, for $0<s<V(R_0)$,
\[
\mu\bigl(\{x\in B_{R_0}:V(|x|)<s\}\bigr)=s.
\]
Consequently, for every nonnegative Borel function $F$,
\[
\int_{B_{R_0}}F(V(|x|))\,d\mu(x)
=
\int_0^{V(R_0)}F(s)\,ds.
\]
\end{lemma}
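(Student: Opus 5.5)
The plan is to compute the distribution function of $\mathcal V$ under $\mu\lfloor B_{R_0}$ directly, identify the pushforward from that, and deduce the integral identity by the abstract change-of-variables formula for pushforward measures.

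\emph{Distribution function.} Fix $0<s<V(R_0)$. Since $V$ is continuous and strictly increasing on $[0,\infty)$ with $V(0)=0$, the inverse $\varrho$ gives, for $x\in B_{R_0}$,
\[
V(|x|)<s \iff |x|<\varrho(s).
\]
Here $\varrho(s)<R_0$ because $s<V(R_0)$. Hence
\[
\{x\in B_{R_0}:V(|x|)<s\}=B_{\varrho(s)},
\qquad
\mu\bigl(B_{\varrho(s)}\bigr)=V(\varrho(s))=s,
\]
by the definition of $V$.

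\emph{Identification of the pushforward.} The map $\mathcal V$ is Borel, being continuous, and it takes values in $[0,V(R_0))$. Its pushforward $\mathcal V_\#(\mu\lfloor B_{R_0})$ is a finite Borel measure on $\R$ with total mass $\mu(B_{R_0})=V(R_0)$. The previous step shows that it agrees with $\mathcal L^1\lfloor(0,V(R_0))$ on every interval $(-\infty,s)$ with $0<s<V(R_0)$.

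For $s\le0$ both measures vanish on $(-\infty,s)$. For the pushforward this follows from
\[
\mathcal V^{-1}\bigl((-\infty,0]\bigr)\cap B_{R_0}=\{0\},
\]
which is $\mu$-null since $\mu\ll\mathcal L^N$. For $s\ge V(R_0)$ both measures equal $V(R_0)$. The half-lines $(-\infty,s)$ form a $\pi$-system generating the Borel $\sigma$-algebra, and the two measures are finite with equal total mass. Dynkin's $\pi$-$\lambda$ theorem (equivalently, uniqueness of a measure with a given distribution function) therefore gives equality of the two measures.

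\emph{Integral identity.} For nonnegative Borel $F$ on $(0,V(R_0))$, extended by $0$ elsewhere, the composition $F\circ\mathcal V$ is Borel. The change-of-variables formula
\[
\int G\circ\mathcal V\,d\nu=\int G\,d(\mathcal V_\#\nu),
\]
proved for indicator functions, then simple functions, then extended by monotone convergence, yields
\[
\int_{B_{R_0}}F(V(|x|))\,d\mu(x)=\int_0^{V(R_0)}F(s)\,ds.
\]
The origin, where $F(V(0))=F(0)$ might be ambiguous, is a $\mu$-null set, so it does not affect the left-hand side.

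There is no serious obstacle. The only point needing care is the use of strict monotonicity of $V$, which makes the sublevel sets exactly balls and so prevents any plateau of $V$ from creating an atom in the pushforward. Continuity of $V$ is what makes $\mu(B_{\varrho(s)})=s$ hold exactly, with no gap between open and closed balls.
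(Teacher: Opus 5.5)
Your proposal is correct and follows the paper's route: in $B_{R_0}$ the sublevel sets $\{V(|x|)<s\}$ are exactly the balls $B_{\varrho(s)}$, each of $\mu$-measure $V(\varrho(s))=s$, and the pushforward and integral identities follow from this. You simply supply the measure-theoretic details that the paper leaves implicit, namely the $\pi$-$\lambda$ uniqueness step and the pushforward change-of-variables formula.
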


\begin{proof}
Since $V$ is strictly increasing,
\[
\{x\in B_{R_0}:V(|x|)<s\}=B_{\varrho(s)}.
\]
Hence its $\mu$-measure is $V(\varrho(s))=s$. The pushforward and integral identities follow.
\end{proof}

\begin{remark}
The coordinate $x\mapsto V(|x|)$ is uniformly distributed with respect to $d\mu$ on each centered ball. It is the ball-side measure-preserving coordinate used in the transfer argument.
\end{remark}

\subsection{Compatible boundary density}
The boundary density compatible with $m$ is defined as follows.
\begin{definition}
The boundary density associated with $m$ is
\[
w(x):=m(x)^{1/p'}.
\]
For a finite-perimeter set $E\subset\R^N$, define
\[
P_w(E):=\int_{\partial^*E}w(x)\,d\Haus^{N-1}(x),
\]
with the value $+\infty$ allowed.
\end{definition}

If $\Omega$ is a bounded Lipschitz domain, then
\[
P_w(\Omega)=\int_{\partial\Omega}w\,d\Haus^{N-1}
\]
whenever the right-hand side is finite.

The choice $w=m^{1/p'}$ is dictated by the coarea--H\"older estimate.
\begin{remark}
The exponent is dictated by the coarea--H\"older estimate \eqref{eq:intro-coarea-holder}. The same compatibility appears in radial flux identities through
\[
w^{p'}=m.
\]
\end{remark}

\subsection{Weighted coarea and centered perimeter profile}
We shall use the following weighted coarea formula.
\begin{lemma}
Let $O\subset\R^N$ be open, let $u\in BV(O)$, and let $g:O\to[0,\infty]$ be Borel measurable. Then
\[
\int_O g\,d|Du|
=
\int_{\R}
\left(\int_{\partial^*\{u>t\}\cap O}g\,d\Haus^{N-1}\right)dt,
\]
with both sides understood in $[0,\infty]$.
\end{lemma}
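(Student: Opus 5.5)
The plan is to reduce to the classical coarea formula for $BV$ functions and then pass from indicator functions to arbitrary nonnegative Borel $g$. We use the Fleming--Rishel coarea formula: for $u\in BV(O)$, the superlevel set $E_t=\{u>t\}$ has finite perimeter in $O$ for $\mathcal L^1$-a.e. $t$, the map $t\mapsto|D\chi_{E_t}|(A)$ is Lebesgue measurable for every Borel $A\subset O$, and
\[
|Du|(A)=\int_{\R}|D\chi_{E_t}|(A)\,dt .
\]
By De Giorgi's structure theorem, $|D\chi_{E_t}|\lfloor O=\Haus^{N-1}\lfloor(\partial^*E_t\cap O)$ for every $t$ at which $E_t$ has locally finite perimeter in $O$. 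This gives the claimed identity for $g=\chi_A$.

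Next we extend by linearity to nonnegative Borel simple functions. For a general Borel $g:O\to[0,\infty]$, we take an increasing sequence of simple Borel functions $g_k\uparrow g$ pointwise and apply monotone convergence twice. On the left, this is monotone convergence with respect to $|Du|$. On the right, it is first applied inside each $\Haus^{N-1}\lfloor(\partial^*E_t\cap O)$-integral, for a.e. $t$, and then to the $dt$-integral of the resulting increasing sequence of functions of $t$. Measurability of $t\mapsto\int_{\partial^*E_t\cap O}g\,d\Haus^{N-1}$ is inherited as a pointwise increasing limit of measurable functions. Both sides may equal $+\infty$, and the identity then holds in $[0,\infty]$. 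The value $g=+\infty$ causes no trouble: on sets of positive $|Du|$-measure it makes both sides infinite consistently, and on $|Du|$-null sets it is invisible on both sides.

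The main technical point is the null set of exceptional $t$, where $E_t$ may fail to have finite perimeter in $O$. On this $\mathcal L^1$-null set we simply define the inner integral arbitrarily, for instance as $0$. This does not affect the $dt$-integral, but it must be handled so that measurability in $t$ holds for the chosen Borel representative. We also rely on the fact that the measurability of $t\mapsto|D\chi_{E_t}|(A)$ for each Borel $A$ is part of the standard statement (e.g.\ Ambrosio--Fusco--Pallara, Thm.~3.40), so the monotone class step only needs to pass through simple functions.
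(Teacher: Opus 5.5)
Your proposal is correct and follows the same route as the paper: the paper simply invokes the standard $BV$ coarea formula, citing Maggi, where the version with a nonnegative Borel weight $g$ is already stated. Your additional steps are the routine derivation behind that citation: the Fleming--Rishel set version, De Giorgi's identity $|D\chi_{E_t}|\lfloor O=\Haus^{N-1}\lfloor(\partial^*E_t\cap O)$, and the passage from indicators to general Borel $g$ by simple functions and monotone convergence, with the exceptional $t$-null set harmless because Lebesgue measure is complete.
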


\begin{proof}
This is the standard coarea formula for $BV$ functions; see, for example, \cite{Maggi}. If $u\in W^{1,1}(O)$, then $d|Du|=|\nabla u|\,dx$.
\end{proof}

For $v>0$, define the centered weighted perimeter profile
\[
J(v):=P_w(B_{\varrho(v)}).
\]
A radial representative of $m$ and hence of $w$ is fixed throughout. In the concrete applications below, $w$ is continuous on $(0,\infty)$ and
\[
P_w(B_R)=\sigma_N R^{N-1}\mathfrak w(R).
\]

\section{Weighted Robin eigenvalues and spectral admissibility}
\label{sec:spectral}

Let
\[
N\ge2,
\qquad
1<p<\infty,
\qquad
\beta>0,
\]
and let $\Omega\subset\R^N$ be a bounded connected Lipschitz domain. Let
\[
m:\Omega\longrightarrow[0,\infty],
\qquad
w:\partial\Omega\longrightarrow[0,\infty)
\]
be measurable functions such that
\begin{equation}
m\ge0\quad\text{a.e. in }\Omega,
\qquad
w\ge0\quad\text{a.e. on }\partial\Omega,
\label{eq:spectral-basic}
\end{equation}
and
\begin{equation}
0<\int_\Omega m\,dx<\infty,
\qquad
0<\int_{\partial\Omega}w\,d\Haus^{N-1}<\infty.
\label{eq:spectral-positive-mass}
\end{equation}
Thus $m$ is finite almost everywhere, is not the zero element of $L^1(\Omega)$, and the constant function has positive weighted mass. Values assigned to $m$ on null sets are immaterial. Whenever the following expressions are finite, set
\[
\mathcal E_{\beta,\Omega}^{m,w}(u)
:=
\int_\Omega|\nabla u|^p\,dx
+
\beta\int_{\partial\Omega}
w|\operatorname{Tr}u|^p\,d\Haus^{N-1},
\]
and
\[
\mathcal M_\Omega^m(u)
:=
\int_\Omega m|u|^p\,dx.
\]
In the abstract definitions below, we assume that both forms are finite on $W^{1,p}(\Omega)$.

The weighted Rayleigh quotient and its first eigenvalue are defined next.
\begin{definition}
For $u\in W^{1,p}(\Omega)$ satisfying $\mathcal M_\Omega^m(u)>0$, define
\[
\mathcal R_{\beta,\Omega}^{m,w}(u)
:=
\frac{\mathcal E_{\beta,\Omega}^{m,w}(u)}
{\mathcal M_\Omega^m(u)}.
\]
The first weighted Robin eigenvalue is
\begin{equation}
\lambda_{1,\beta}^{m,w}(\Omega)
:=
\inf\left\{
\mathcal R_{\beta,\Omega}^{m,w}(u):
u\in W^{1,p}(\Omega),\ \mathcal M_\Omega^m(u)>0
\right\}.
\label{eq:weighted-rayleigh}
\end{equation}
\end{definition}

The abstract transfer argument uses the following notion of spectral admissibility.
\begin{definition}
\label{def:spectral-admissibility}
The pair $(m,w)$ is called spectrally admissible on $\Omega$ if:
\begin{enumerate}
\item the infimum in \eqref{eq:weighted-rayleigh} is attained by a function
\[
u\in W^{1,p}(\Omega)\cap L^\infty(\Omega);
\]
\item every nonnegative nontrivial first eigenfunction has a locally H\"older-continuous representative which is strictly positive in $\Omega$;
\item every first eigenfunction satisfies
\begin{align}
&\int_\Omega
|\nabla u|^{p-2}\nabla u\cdot\nabla\zeta\,dx
+
\beta\int_{\partial\Omega}
w|\operatorname{Tr}u|^{p-2}
\operatorname{Tr}u\,\operatorname{Tr}\zeta
\,d\Haus^{N-1}
\notag\\
&\hspace{35mm}=
\lambda_{1,\beta}^{m,w}(\Omega)
\int_\Omega
m|u|^{p-2}u\zeta\,dx
\label{eq:weak-eigenvalue}
\end{align}
for every $\zeta\in W^{1,p}(\Omega)$;
\item the positive first eigenfunction is unique up to multiplication by a positive constant.
\end{enumerate}
\end{definition}

\subsection{A finite-exponent sufficient framework}

For the remainder of this section, when proving a concrete sufficient criterion, assume
\[
1<p\le N.
\]
We impose
\begin{equation}
m\in L^q(\Omega)
\quad\text{for some }q>\frac Np,
\label{eq:m-finite-exponent}
\end{equation}
and
\begin{equation}
w\in L^r(\partial\Omega)
\quad\text{for some }r>\frac{N-1}{p-1}.
\label{eq:w-subcritical}
\end{equation}
Write
\[
q'=\frac q{q-1},
\qquad
r'=\frac r{r-1},
\]
with the conventions $q'=1$ when $q=\infty$ and $r'=1$ when $r=\infty$. If $p<N$, then
\begin{equation}
pq'<p^*,
\qquad
pr'<p_{\partial}.
\label{eq:strict-subcritical-exponents}
\end{equation}
If $p=N$, both $pq'$ and $pr'$ are finite. The Sobolev and trace embeddings are compact into $L^{pq'}(\Omega)$ and $L^{pr'}(\partial\Omega)$ in either case: for $p<N$ this follows from \eqref{eq:strict-subcritical-exponents}, while for $p=N$ it follows from compactness below every finite exponent.
H\"older's inequality and the Sobolev and trace embeddings give
\[
\int_{\partial\Omega}w|\operatorname{Tr}u|^p\,d\Haus^{N-1}
\le
\|w\|_{L^r(\partial\Omega)}
\|\operatorname{Tr}u\|_{L^{pr'}(\partial\Omega)}^p,
\]
and
\begin{equation}
\int_\Omega m|u|^p\,dx
\le
\|m\|_{L^q(\Omega)}
\|u\|_{L^{pq'}(\Omega)}^p.
\label{eq:weighted-mass-bound}
\end{equation}
Thus the abstract forms above are finite on $W^{1,p}(\Omega)$ under these finite-exponent hypotheses.

\subsection{Compactness and coercivity}

The exponent conditions above yield compactness of both weighted forms.
\begin{lemma}
\label{lem:weighted-form-compactness}
Suppose that $u_j\rightharpoonup u$ weakly in $W^{1,p}(\Omega)$. Then
\begin{equation}
\int_\Omega m|u_j|^p\,dx
\longrightarrow
\int_\Omega m|u|^p\,dx,
\label{eq:mass-compact}
\end{equation}
and
\begin{equation}
\int_{\partial\Omega}
w|\operatorname{Tr}u_j|^p\,d\Haus^{N-1}
\longrightarrow
\int_{\partial\Omega}
w|\operatorname{Tr}u|^p\,d\Haus^{N-1}.
\label{eq:boundary-compact}
\end{equation}
\end{lemma}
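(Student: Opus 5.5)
The plan is to reduce both convergences to strong convergence in the auxiliary Lebesgue spaces and then apply Hölder's inequality.

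\textbf{Strong convergence.} Since $u_j\rightharpoonup u$ weakly in $W^{1,p}(\Omega)$, the compactness of the embeddings recorded above applies. These are the Sobolev embedding $W^{1,p}(\Omega)\hookrightarrow L^{pq'}(\Omega)$ and the trace embedding $W^{1,p}(\Omega)\to L^{pr'}(\partial\Omega)$, which are compact for $p<N$ by \eqref{eq:strict-subcritical-exponents} and for $p=N$ because every finite exponent is subcritical. Compact operators map weakly convergent sequences to strongly convergent ones, so the whole sequence satisfies
\[
u_j\to u\ \text{ in } L^{pq'}(\Omega),
\qquad
\operatorname{Tr}u_j\to\operatorname{Tr}u\ \text{ in } L^{pr'}(\partial\Omega).
\]
No passage to a subsequence is needed. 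In particular $\sup_j\|u_j\|_{L^{pq'}}$ and $\sup_j\|\operatorname{Tr}u_j\|_{L^{pr'}}$ are finite.

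\textbf{Difference estimate.} For $a,b\in\R$ I will use the elementary inequality
\[
\bigl||a|^p-|b|^p\bigr|\le p\bigl(|a|^{p-1}+|b|^{p-1}\bigr)|a-b|.
\]
It follows from the mean value theorem applied to $t\mapsto|t|^p$. Multiplying by $m$ and integrating, I then apply Hölder's inequality twice. The first application separates $m\in L^q$ from a factor in $L^{q'}$. The second uses exponents $p/(p-1)$ and $p$ on $L^{pq'}$-functions, that is, $\||a|^{p-1}|a-b|\|_{L^{q'}}\le\|a\|_{L^{pq'}}^{p-1}\|a-b\|_{L^{pq'}}$. Together these give
\[
\Bigl|\int_\Omega m|u_j|^p-\int_\Omega m|u|^p\Bigr|
\le p\|m\|_{L^q}\bigl(\|u_j\|_{L^{pq'}}^{p-1}+\|u\|_{L^{pq'}}^{p-1}\bigr)\|u_j-u\|_{L^{pq'}}\to0,
\]
which is \eqref{eq:mass-compact}. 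The boundary statement \eqref{eq:boundary-compact} is identical, with $w\in L^r(\partial\Omega)$, the measure $\Haus^{N-1}$ on $\partial\Omega$, and the exponent $pr'$.

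\textbf{Endpoint cases.} When $q=\infty$ (respectively $r=\infty$), $q'=1$ and the argument runs in $L^p$ with the $L^\infty$ norm of the weight. The compactness of $W^{1,p}\hookrightarrow L^p$ on bounded Lipschitz domains, and of the trace into $L^p(\partial\Omega)$, remains valid, so nothing changes.

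The only step needing any care is the compactness of the embeddings, including the trace at $p=N$. I take this from the discussion preceding the lemma, which grants compactness into $L^{pq'}(\Omega)$ and $L^{pr'}(\partial\Omega)$ in both cases, so no real obstacle remains.
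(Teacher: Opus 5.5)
Your proposal is correct and follows the paper's proof. You use compactness of the Sobolev and trace embeddings into $L^{pq'}(\Omega)$ and $L^{pr'}(\partial\Omega)$, then H\"older's inequality against $m\in L^q(\Omega)$ and $w\in L^r(\partial\Omega)$; your mean-value difference estimate just spells out the paper's step that $|u_j|^p\to|u|^p$ strongly in $L^{q'}(\Omega)$, and likewise for the traces in $L^{r'}(\partial\Omega)$.
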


\begin{proof}
The compact Sobolev embedding described above gives
$u_j\to u$ strongly in $L^{pq'}(\Omega)$. Hence
$|u_j|^p\to|u|^p$ strongly in $L^{q'}(\Omega)$, and H\"older's inequality proves \eqref{eq:mass-compact}.
Similarly, compactness of the trace embedding implies
$\operatorname{Tr}u_j\to\operatorname{Tr}u$ strongly in $L^{pr'}(\partial\Omega)$. Thus
$|\operatorname{Tr}u_j|^p\to|\operatorname{Tr}u|^p$ strongly in $L^{r'}(\partial\Omega)$, which proves \eqref{eq:boundary-compact}.
\end{proof}

The same hypotheses also give a coercive control of the $W^{1,p}$-norm.
\begin{lemma}
\label{lem:compactness-coercivity}
There exists $C>0$ such that
\begin{equation}
\|u\|_{W^{1,p}(\Omega)}^p
\le
C\left[
\int_\Omega|\nabla u|^p\,dx
+
\int_{\partial\Omega}
w|\operatorname{Tr}u|^p\,d\Haus^{N-1}
\right]
\label{eq:weighted-coercivity}
\end{equation}
for every $u\in W^{1,p}(\Omega)$.
\end{lemma}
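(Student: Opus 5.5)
The plan is the standard contradiction and compactness argument. Suppose \eqref{eq:weighted-coercivity} fails. Then there are $u_j\in W^{1,p}(\Omega)$ with $\|u_j\|_{W^{1,p}(\Omega)}=1$ and
\[
\int_\Omega|\nabla u_j|^p\,dx+\int_{\partial\Omega}w|\operatorname{Tr}u_j|^p\,d\Haus^{N-1}<\frac1j .
\]
By reflexivity of $W^{1,p}(\Omega)$ (recall $1<p<\infty$) we pass to a subsequence with $u_j\rightharpoonup u$ weakly in $W^{1,p}(\Omega)$. The compact Rellich embedding for the bounded Lipschitz domain $\Omega$ then gives $u_j\to u$ strongly in $L^p(\Omega)$.

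Next we identify the limit. Weak lower semicontinuity of $v\mapsto\int_\Omega|\nabla v|^p\,dx$ gives $\nabla u=0$ a.e. Since $\Omega$ is connected, $u\equiv c$ for some constant $c$. Because $\nabla u_j\to0$ in $L^p$ and $u_j\to c$ in $L^p$, the convergence is in fact strong in $W^{1,p}(\Omega)$. Hence
\[
\|u\|_{W^{1,p}(\Omega)}=\lim_j\|u_j\|_{W^{1,p}(\Omega)}=1,
\]
so $c\neq0$; explicitly $|c|^p|\Omega|=1$.

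Then we use the boundary term to reach a contradiction. By \eqref{eq:boundary-compact} of \Cref{lem:weighted-form-compactness}, which applies under the standing hypotheses \eqref{eq:w-subcritical},
\[
\int_{\partial\Omega}w|\operatorname{Tr}u_j|^p\,d\Haus^{N-1}\to\int_{\partial\Omega}w|c|^p\,d\Haus^{N-1}=|c|^p\int_{\partial\Omega}w\,d\Haus^{N-1}.
\]
The left side tends to $0$, while the right side is strictly positive by \eqref{eq:spectral-positive-mass} and $c\ne0$. This contradiction yields a constant $C$ as claimed. Note that $C$ depends on $\Omega$, $p$ and $w$, but not on $\beta$ or $m$.

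The only step needing care is that the limit is nonzero. This requires the upgrade from weak to strong convergence, which is obtained from $\nabla u_j\to0$ in $L^p$ together with Rellich compactness. It also uses connectedness of $\Omega$, to reduce the kernel of the gradient to constants, and the positivity $\int_{\partial\Omega}w>0$, to rule out a nonzero constant. Everything else follows directly from \Cref{lem:weighted-form-compactness}.
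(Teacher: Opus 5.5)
Your proof is correct and follows essentially the same contradiction--compactness argument as the paper, ending in the same clash between $\|u_j\|_{W^{1,p}(\Omega)}=1$ and $\int_{\partial\Omega}w\,d\Haus^{N-1}>0$. The only cosmetic difference is how you get strong convergence to a constant. The paper subtracts the means $c_j$ and invokes the Poincar\'e inequality to obtain $u_j\to c$ strongly in $W^{1,p}(\Omega)$ directly. You reach the same conclusion from weak compactness, Rellich, lower semicontinuity, and connectedness, which is just the standard proof of that Poincar\'e inequality unpacked.
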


\begin{proof}
Suppose that \eqref{eq:weighted-coercivity} is false. Then there is a sequence $(u_j)$ such that
\[
\|u_j\|_{W^{1,p}(\Omega)}=1
\]
and the right-hand side of \eqref{eq:weighted-coercivity} tends to zero. Let
\[
c_j:=\frac1{|\Omega|}\int_\Omega u_j\,dx.
\]
The Poincar\'e inequality gives
$\|u_j-c_j\|_{W^{1,p}(\Omega)}\to0$. The constants $(c_j)$ are bounded because $(u_j)$ is bounded in $L^p(\Omega)$; after passing to a subsequence, $c_j\to c$. Hence $u_j\to c$ strongly in $W^{1,p}(\Omega)$ and in the trace space. Therefore
\[
\int_{\partial\Omega}w|\operatorname{Tr}u_j|^p\,d\Haus^{N-1}
\longrightarrow
|c|^p\int_{\partial\Omega}w\,d\Haus^{N-1}.
\]
By \eqref{eq:spectral-positive-mass}, $c=0$. This contradicts
$\|u_j\|_{W^{1,p}(\Omega)}=1$.
\end{proof}

\subsection{Existence and the Euler equation}

We first establish attainment and the corresponding Euler equation.
\begin{theorem}
\label{thm:attainment}
One has
\[
0<\lambda_{1,\beta}^{m,w}(\Omega)<\infty,
\]
and the infimum in \eqref{eq:weighted-rayleigh} is attained by a nonnegative function $u\in W^{1,p}(\Omega)$ satisfying
\begin{equation*}
\int_\Omega mu^p\,dx=1.
\end{equation*}
Every minimizer satisfies \eqref{eq:weak-eigenvalue}.
\end{theorem}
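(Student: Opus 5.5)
The argument is the direct method in the calculus of variations, combined with Lemma~\ref{lem:weighted-form-compactness} and Lemma~\ref{lem:compactness-coercivity}.

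\emph{Finiteness.} By \eqref{eq:spectral-positive-mass} the constant function $u\equiv1$ satisfies $\mathcal M_\Omega^m(1)=\int_\Omega m\,dx>0$. Its Rayleigh quotient is
\[
\beta\int_{\partial\Omega}w\,d\Haus^{N-1}\Big/\int_\Omega m\,dx<\infty,
\]
so $\lambda:=\lambda_{1,\beta}^{m,w}(\Omega)<\infty$.

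\emph{Homogeneity and sign.} The quotient is $p$-homogeneous, so we may normalize $\mathcal M_\Omega^m(u)=1$. Since $|\nabla|u||=|\nabla u|$ a.e. and $\operatorname{Tr}|u|=|\operatorname{Tr}u|$, replacing $u$ by $|u|$ does not change $\mathcal E$ or $\mathcal M$. We may therefore restrict to nonnegative competitors.

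\emph{Minimizing sequence.} Take $u_j\ge0$ with $\mathcal M_\Omega^m(u_j)=1$ and $\mathcal E_{\beta,\Omega}^{m,w}(u_j)\to\lambda$. Since $\beta>0$, Lemma~\ref{lem:compactness-coercivity} gives
\[
\|u_j\|_{W^{1,p}}^p\le C\max(1,\beta^{-1})\,\mathcal E(u_j),
\]
so the sequence is bounded in $W^{1,p}(\Omega)$. Pass to a subsequence with $u_j\rightharpoonup u$ weakly in $W^{1,p}$ and $u_j\to u$ a.e.

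\emph{Passing to the limit.} By \eqref{eq:mass-compact}, $\int_\Omega m u^p\,dx=1$, so the constraint survives in the limit and $u\ge0$. By \eqref{eq:boundary-compact} the boundary term converges. Weak lower semicontinuity of $\int|\nabla u|^p$ then gives $\mathcal E(u)\le\liminf\mathcal E(u_j)=\lambda$, so $u$ attains the infimum. This step is the main obstacle: without the compactness of the weighted mass, the normalization could be lost, for instance through concentration at a singularity of $m$. Lemma~\ref{lem:weighted-form-compactness} is precisely what rules this out.

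\emph{Positivity of $\lambda$.} Suppose $\lambda=0$. Then the minimizer has $\nabla u=0$ and $\int_{\partial\Omega}w|\operatorname{Tr}u|^p=0$. Since $\Omega$ is connected, $u$ equals a constant $c$. Because $\int_{\partial\Omega}w>0$, we get $c=0$, which contradicts $\int m u^p=1$. Equivalently, \eqref{eq:weighted-coercivity} together with \eqref{eq:weighted-mass-bound} and the Sobolev embedding gives
\[
1=\mathcal M(u)\le C'\|u\|_{W^{1,p}}^p\le C''\mathcal E(u).
\]

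\emph{Euler equation.} Let $u$ be any minimizer and fix $\zeta\in W^{1,p}(\Omega)$. Consider
\[
g(\varepsilon)=\mathcal E(u+\varepsilon\zeta)-\lambda\,\mathcal M(u+\varepsilon\zeta).
\]
We have $g\ge0$ for all $\varepsilon$ (where $\mathcal M>0$ this is the definition of $\lambda$; where $\mathcal M=0$ it is trivial), and $g(0)=0$. To differentiate at $\varepsilon=0$, use $\frac{d}{d\varepsilon}|a+\varepsilon b|^p=p|a+\varepsilon b|^{p-2}(a+\varepsilon b)b$. Dominated convergence is justified by the bound $p(|a|+|b|)^{p-1}|b|$ for $|\varepsilon|\le1$. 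This bound is integrable against $dx$ in the gradient term, against $w\,d\Haus^{N-1}$ in the boundary term (by H\"older with $L^r$ and $L^{pr'}$ of the traces), and against $m\,dx$ in the mass term (by H\"older with $L^q$ and $L^{pq'}$). Setting $g'(0)=0$ and dividing by $p$ yields \eqref{eq:weak-eigenvalue}.
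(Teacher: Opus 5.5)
Your proof is correct and follows essentially the same direct-method route as the paper: boundedness of a normalized minimizing sequence from \Cref{lem:compactness-coercivity}, passage to the limit via \Cref{lem:weighted-form-compactness}, and replacement of $u$ by $|u|$. The differences are minor. You derive the Euler equation by differentiating the nonnegative function $g(\varepsilon)=\mathcal E(u+\varepsilon\zeta)-\lambda\,\mathcal M(u+\varepsilon\zeta)$ at its minimum $\varepsilon=0$, which identifies the multiplier directly and covers unnormalized minimizers without the Lagrange multiplier rule. Your main positivity argument is qualitative (through the minimizer and connectedness), whereas the paper uses the quantitative coercivity bound, which you also record.
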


\begin{proof}
The constraint set
\[
\mathcal N:=\left\{u\in W^{1,p}(\Omega):\int_\Omega m|u|^p\,dx=1\right\}
\]
is nonempty by \eqref{eq:spectral-positive-mass}, since a suitable nonzero constant can be normalized to have weighted mass one. Let $(u_j)\subset\mathcal N$ be a minimizing sequence. By \Cref{lem:compactness-coercivity}, it is bounded in $W^{1,p}(\Omega)$. Passing to a subsequence, $u_j\rightharpoonup u$ weakly in $W^{1,p}(\Omega)$. By \Cref{lem:weighted-form-compactness}, $u\in\mathcal N$. Weak lower semicontinuity of the gradient term and compact convergence of the boundary term show that $u$ is a minimizer. Replacing $u$ by $|u|$ gives a nonnegative minimizer.

To prove positivity of the eigenvalue, observe that
\[
\mathcal E_{\beta,\Omega}^{m,w}(u)
\ge
\min\{1,\beta\}
\left[
\int_\Omega|\nabla u|^p\,dx
+
\int_{\partial\Omega}w|\operatorname{Tr}u|^p\,d\Haus^{N-1}
\right].
\]
Combining this with \eqref{eq:weighted-coercivity} and \eqref{eq:weighted-mass-bound} yields a positive lower bound for the energy on $\mathcal N$. Finiteness follows by evaluating the quotient on any function with positive weighted mass.

Finally, for $G(u):=\int_\Omega m|u|^p\,dx$, one has $G'(u)[u]=p$ on $\mathcal N$. Thus the constraint derivative does not vanish there. The Lagrange multiplier rule gives \eqref{eq:weak-eigenvalue}, with multiplier $\lambda_{1,\beta}^{m,w}(\Omega)$ after testing the Euler equation with $u$.
\end{proof}

\subsection{Boundedness and positivity}

The minimizer obtained above is essentially bounded.
\begin{theorem}
\label{thm:first-eigenfunction-boundedness}
Every nonnegative first eigenfunction belongs to $L^\infty(\Omega)$.
\end{theorem}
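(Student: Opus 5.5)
We will prove the $L^\infty$ bound by a De Giorgi (or equivalently Moser) iteration adapted to the Robin setting with the finite-exponent weights \eqref{eq:m-finite-exponent}--\eqref{eq:w-subcritical}. Let $u\ge0$ be a first eigenfunction and write $\lambda=\lambda_{1,\beta}^{m,w}(\Omega)$. For $k\ge0$ we test \eqref{eq:weak-eigenvalue} with $\zeta=(u-k)_+\in W^{1,p}(\Omega)$. On $\{u>k\}$ we have $u^{p-1}(u-k)\ge0$ and $\operatorname{Tr}u\ge\operatorname{Tr}(u-k)_+$, so the boundary term is nonnegative and can be discarded. This yields the Caccioppoli-type inequality
\[
\int_{A_k}|\nabla u|^p\,dx\le\lambda\int_{A_k}m\,u^{p-1}(u-k)\,dx,\qquad A_k:=\{u>k\}.
\]
The right-hand side is controlled by $u^{p-1}(u-k)\le 2^{p-1}\bigl[(u-k)^p+k^{p-1}(u-k)\bigr]$ on $A_k$, together with Hölder's inequality using $m\in L^q(\Omega)$.

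To close the iteration we need a Sobolev inequality for $v=(u-k)_+$ that uses only $\|\nabla v\|_p$. The full $W^{1,p}$ norm is not directly available, since we discarded the boundary term. The standard remedy is that $|A_k|\to0$ as $k\to\infty$, because $u\in L^p$. For $k\ge k_0$ with $|A_k|\le|\Omega|/2$, the function $v$ vanishes on half of $\Omega$. A Poincaré inequality for functions vanishing on a set of measure at least $|\Omega|/2$, which holds on connected Lipschitz domains, then gives
\[
\|v\|_{L^{s}(\Omega)}\le C\|\nabla v\|_{L^p(\Omega)},
\]
with $s=p^*$ if $p<N$ and any finite $s$ if $p=N$.

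Combining the two inequalities, Hölder's inequality gives
\[
\int_{A_k}m\,v^p\le\|m\|_{L^q}\,\|v\|_{L^{s}}^p\,|A_k|^{1/q'-p/s}.
\]
Since $q>N/p$, we may choose $s$ so that $\varepsilon:=1/q'-p/s>0$: for $p<N$ take $s=p^*$, and for $p=N$ take $s$ large. For the linear term, $k^{p-1}\int_{A_k}m\,v$ is estimated similarly. Passing from level $k$ to level $h>k$ via $(h-k)^{p}|A_h|\le\int_{A_k}v^p$ and the Sobolev bound, we obtain an estimate of the form
\[
|A_h|\le\frac{C\,(1+h)^{\gamma}}{(h-k)^{\alpha}}\,|A_k|^{1+\delta}
\]
for some $\delta>0$, valid for $h>k\ge k_0$. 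Once $|A_{k_0}|$ is small, the standard Stampacchia lemma gives $|A_{k_0+d}|=0$ for a finite $d$. The inhomogeneous term can be handled more cleanly by normalizing: either take $k_0$ large, or apply the lemma to $u/K$ with the factor $(1+h)^\gamma$ absorbed, as in the classical proof.

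The main obstacle is the smallness of $\varepsilon$ together with the inhomogeneous $k^{p-1}$ term, which prevents a purely homogeneous recursion. I would first try to avoid it by restricting to levels $k\ge1$ and using $u^{p-1}\le (u-k)^{p-1}+k^{p-1}\le 2^{p-1}k^{p-1}(1+v)^{p-1}$ on a normalized scale. If that fails, I would switch to Moser iteration: test with $\zeta=\min(u,L)^{p(\kappa-1)}u$, drop the nonnegative boundary term as before, and iterate $\|u\|_{L^{\kappa_j p q'}}$ with $\kappa_j=(s/(pq'))^j$ using the same local Sobolev inequality. The ratio $s/(pq')>1$ is exactly what \eqref{eq:m-finite-exponent} guarantees. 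Since $w\ge0$, the boundary weight condition \eqref{eq:w-subcritical} is not needed for this step.
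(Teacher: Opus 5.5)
Your argument is correct, but your main route differs from the paper's. You use De Giorgi--Stampacchia truncation with $(u-k)_+$, whereas the paper's proof is exactly your Moser fallback: test with $u\,u_L^{p(k-1)}$, drop the nonnegative Robin term, and iterate with ratio $\chi=s/(pq')$.

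In the Moser scheme the obstacle you flag never appears. The paper uses the full Sobolev inequality $\|v_L\|_{L^s}^p\le C(\|\nabla v_L\|_{L^p}^p+\|v_L\|_{L^p}^p)$. It then bounds $\|v_L\|_{L^p}\le C_\Omega\|v_L\|_{L^{pq'}}$, which is already the norm on the right-hand side because $pq'\ge p$. Your fallback must use this inequality, not the half-measure Poincar\'e inequality, since $u\,u_L^{\kappa-1}$ does not vanish on half of $\Omega$.

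Your truncation route also closes, though the $k$-dependent constant has to be handled as follows:
\begin{itemize}
\item The half-measure Poincar\'e--Sobolev inequality is available, because $\Omega$ is a connected domain in this section.
\item Absorbing the $v^p$ term for small $|A_k|$ and applying H\"older to the linear term give $\|v\|_{L^s}^{p-1}\le Ck^{p-1}|A_k|^{1/q'-1/s}$.
\item Hence $|A_h|\le C\bigl(k/(h-k)\bigr)^p|A_k|^{1+\delta}$ with $\delta=p\varepsilon/(p-1)>0$.
\item Restricting to levels in $[k_0,2k_0]$ makes the constant uniform. Stampacchia's lemma then gives $|A_{2k_0}|=0$ once $|A_{k_0}|$ is small enough.
\end{itemize}

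Moser's scheme is shorter and yields a norm bound $\|u\|_{L^\infty}\le C\|u\|_{L^{pq'}}$ without any Poincar\'e inequality or smallness of superlevel sets. The truncation route costs extra bookkeeping (absorption threshold, level-set smallness, the level-dependent factor), but it is the more flexible template for inhomogeneous data.
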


\begin{proof}
Let $u\ge0$ be a first eigenfunction, and write $\lambda:=\lambda_{1,\beta}^{m,w}(\Omega)$. For $L>0$, set $u_L:=\min\{u,L\}$. Fix $k\ge1$ and use
\[
\zeta=u\,u_L^{p(k-1)}
\]
as a test function in \eqref{eq:weak-eigenvalue}. The gradient term equals
\begin{align*}
&\int_{\{u\ge L\}}L^{p(k-1)}|\nabla u|^p\,dx
+
\bigl[1+p(k-1)\bigr]
\int_{\{u<L\}}u^{p(k-1)}|\nabla u|^p\,dx.
\end{align*}
Since the Robin term is nonnegative, this implies
\begin{equation*}
\int_\Omega
\left|\nabla\bigl(u\,u_L^{k-1}\bigr)\right|^p\,dx
\le
C_pk^p\lambda
\int_\Omega
m\bigl(u\,u_L^{k-1}\bigr)^p\,dx.
\end{equation*}
Put $v_L:=u\,u_L^{k-1}$. H\"older's inequality gives
\begin{equation*}
\int_\Omega mv_L^p\,dx
\le
\|m\|_{L^q(\Omega)}
\|v_L\|_{L^{pq'}(\Omega)}^p.
\end{equation*}
Choose
\[
s:=p^*\quad\text{if }p<N,
\]
whereas, if $p=N$, fix any finite $s>pq'$. In both cases $s>pq'$ and the Sobolev inequality on a bounded Lipschitz domain gives
\[
\|v_L\|_{L^s(\Omega)}^p
\le
C\left(
\|\nabla v_L\|_{L^p(\Omega)}^p
+
\|v_L\|_{L^p(\Omega)}^p
\right).
\]
Since $pq'\ge p$ and $|\Omega|<\infty$,
$\|v_L\|_{L^p}\le C_\Omega\|v_L\|_{L^{pq'}}$. Combining these estimates yields
\begin{equation*}
\|v_L\|_{L^s(\Omega)}
\le
C_0^{1/p}k\,
\|v_L\|_{L^{pq'}(\Omega)},
\end{equation*}
where $C_0$ is independent of $L$ and $k$. For $k=1$ the norm on the right is finite by the Sobolev embedding. Thereafter the exponents below are chosen inductively, so the right-hand side at each finite step is finite by the preceding step. Monotone convergence as $L\to\infty$ gives
\begin{equation}
\|u\|_{L^{ks}(\Omega)}
\le
\bigl(C_0^{1/p}k\bigr)^{1/k}
\|u\|_{L^{kpq'}(\Omega)}.
\label{eq:moser-step}
\end{equation}
Set
\[
\chi:=\frac{s}{pq'}>1,
\qquad
k_j:=\chi^j.
\]
Then $k_js=k_{j+1}pq'$, and iteration of \eqref{eq:moser-step} gives
\[
\|u\|_{L^{k_{J+1}pq'}(\Omega)}
\le
\left[
\prod_{j=0}^J
\bigl(C_0^{1/p}k_j\bigr)^{1/k_j}
\right]
\|u\|_{L^{pq'}(\Omega)}.
\]
The infinite product converges because
$\sum_j(1+\log k_j)/k_j<\infty$. Since $k_{J+1}pq'\to\infty$, one obtains $u\in L^\infty(\Omega)$.
\end{proof}

The nonnegative first eigenfunction is in fact continuous and strictly positive in the interior.
\begin{theorem}
\label{thm:first-eigenfunction-positivity}
Every nonnegative first eigenfunction has a locally H\"older-continuous representative and satisfies
\begin{equation*}
u>0\qquad\text{in }\Omega.
\end{equation*}
\end{theorem}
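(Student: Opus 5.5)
Since $u\ge0$ is bounded by \Cref{thm:first-eigenfunction-boundedness}, the eigenvalue equation \eqref{eq:weak-eigenvalue}, tested only with $\zeta\in C_c^\infty(\Omega)$, says that $u$ is a weak solution of the interior equation
\[
-\Delta_pu=V(x)\,u^{p-1},\qquad V:=\lambda m,\quad \lambda:=\lambda_{1,\beta}^{m,w}(\Omega),
\]
with a potential $V\in L^q(\Omega)$, $q>N/p$. The Robin term plays no role in the interior. Both assertions will be deduced from the classical local theory of Serrin and Trudinger for quasilinear equations of $p$-Laplace type with lower-order coefficients in $L^{q}$, $q>N/p$; this is exactly the Kato-type integrability in \eqref{eq:m-finite-exponent}.

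\emph{Step 1 (Hölder continuity).} Since $u\in L^\infty$, the right-hand side $f:=\lambda m u^{p-1}$ lies in $L^q_{\mathrm{loc}}$ with $q>N/p$. De Giorgi--Nash--Moser iteration for the $p$-Laplacian then gives local boundedness together with an oscillation decay $\operatorname{osc}_{B_\rho}u\le C(\rho/R)^\alpha$, up to a term of order $\rho^{(p-N/q)/(p-1)}\|f\|_{L^q}^{1/(p-1)}$. Hence $u$ has a locally Hölder-continuous representative. Concretely, I would quote Serrin's local estimates, or DiBenedetto's, rather than redo the iteration. The only thing to check is the structural hypothesis: the coefficient of $u^{p-1}$ must be in $L^{q}$ with $q>N/p$. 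This holds for all $1<p\le N$; the case $p=N$ requires $q>1$, which \eqref{eq:m-finite-exponent} gives.

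\emph{Step 2 (strict positivity).} I would apply Trudinger's weak Harnack inequality for nonnegative supersolutions. Since $m\ge0$ and $u\ge0$, $u$ satisfies $-\Delta_p u\ge0$ weakly. Alternatively, the Harnack inequality applies directly to $-\Delta_p u=Vu^{p-1}$ with $V\in L^q$, $q>N/p$, and gives $\sup_{B_\rho}u\le C\inf_{B_\rho}u$ on balls with $B_{2\rho}\Subset\Omega$. Consequently the zero set of the continuous representative is both open and relatively closed in $\Omega$. Since $\Omega$ is connected and $u\not\equiv0$ (its weighted mass is $1$), we get $u>0$ everywhere in $\Omega$. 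An alternative route is Vázquez's strong maximum principle for $-\Delta_pu\ge0$, which avoids the potential entirely.

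\emph{Main obstacle.} The delicate point is that $m$ may be singular, possibly unbounded near the origin, so smooth-coefficient arguments are unavailable. It must be verified that the singular potential enters the local theory only through its $L^q$ norm with $q>N/p$. This holds because boundedness of $u$ already converts the reaction term into an $L^q$ source. For positivity, the supersolution argument avoids the singular term altogether, because it is nonnegative. I expect the only real care to be needed at $p=N$, and in making precise that the Harnack constants depend on $\|V\|_{L^q}$ only through a scale-invariant smallness on small balls.
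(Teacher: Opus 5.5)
Your main route is the paper's proof: Serrin's local continuity theorem for the bounded solution with datum $\lambda m u^{p-1}\in L^q_{\mathrm{loc}}$, $q>N/p$, then Trudinger's weak Harnack inequality for the nonnegative supersolution of $-\Delta_p u\ge 0$, and finally the open--closed zero-set argument on the connected domain. One caveat concerns your side remark: V\'azquez's strong maximum principle is stated for $C^1$ functions, and $C^1$ regularity can fail when $m$ is only in $L^q$ with $N/p<q\le N$ (for instance $|x|^b$ with $b<-1$). The weak Harnack route you take as primary is therefore the right one here.
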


\begin{proof}
By \Cref{thm:first-eigenfunction-boundedness}, $u\in L^\infty(\Omega)$. The interior equation is
\[
-\Delta_pu=\lambda_{1,\beta}^{m,w}(\Omega)mu^{p-1}.
\]
The right-hand side belongs to $L^q_{\mathrm{loc}}(\Omega)$ with $q>N/p$. Choose $\varepsilon\in(0,p)$ so small that $q>N/(p-\varepsilon)$. The special case of Serrin's local continuity theorem \cite[Theorem~8]{Serrin} for $A(\xi)=|\xi|^{p-2}\xi$ and datum $f=\lambda m u^{p-1}$ then gives a representative in $C^{0,\alpha}_{\mathrm{loc}}(\Omega)$ for some $\alpha\in(0,1)$.
Moreover, $u$ is a nonnegative weak supersolution of $-\Delta_pu\ge0$. Trudinger's weak Harnack inequality \cite[Theorem~1.2]{Trudinger} states that, whenever $B_{2\rho}(x_0)\Subset\Omega$, there are $\kappa>0$ and $C>0$, depending only on $N$, $p$, and the fixed radius ratio, such that
\[
\left(\frac1{|B_{2\rho}|}\int_{B_{2\rho}(x_0)}u^\kappa\,dx\right)^{1/\kappa}
\le C\,\operatorname*{ess\,inf}_{B_\rho(x_0)}u.
\]
If the continuous representative vanished at $x_0$, the essential infimum on every sufficiently small $B_\rho(x_0)$ would be zero, so the inequality would force $u=0$ almost everywhere on $B_{2\rho}(x_0)$ and hence everywhere there by continuity. Thus its zero set is both open and closed in the connected set $\Omega$. It cannot be all of $\Omega$ because the eigenfunction is nontrivial. Therefore $u>0$ throughout $\Omega$.
\end{proof}

\subsection{Simplicity}

The first eigenspace is one-dimensional in the positive cone.
\begin{theorem}
\label{thm:first-eigenvalue-simplicity}
Any two positive first eigenfunctions are proportional.
\end{theorem}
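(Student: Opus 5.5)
We argue by the hidden convexity of the energy along the curve $t\mapsto\bigl((1-t)u^p+tv^p\bigr)^{1/p}$, in the Belloni--Kawohl / Díaz--Saa style. Let $u,v$ be positive first eigenfunctions. By \Cref{thm:first-eigenfunction-boundedness} and \Cref{thm:first-eigenfunction-positivity} they are bounded, locally H\"older continuous and strictly positive in $\Omega$. Normalize $\int_\Omega mu^p\,dx=\int_\Omega mv^p\,dx=1$ and put
\[
\eta:=\Bigl(\tfrac12(u^p+v^p)\Bigr)^{1/p}.
\]
Then $\eta\in W^{1,p}(\Omega)$ and $\int_\Omega m\eta^p\,dx=1$. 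The trace of $\eta$ is $\bigl(\tfrac12(\operatorname{Tr}u^p+\operatorname{Tr}v^p)\bigr)^{1/p}$, because the map $(a,b)\mapsto((a^p+b^p)/2)^{1/p}$ is Lipschitz on $[0,\infty)^2$ and commutes with traces. It follows that the Robin terms satisfy exactly
\[
\int_{\partial\Omega}w|\operatorname{Tr}\eta|^p\,d\Haus^{N-1}=\tfrac12\int_{\partial\Omega}w(\operatorname{Tr}u)^p\,d\Haus^{N-1}+\tfrac12\int_{\partial\Omega}w(\operatorname{Tr}v)^p\,d\Haus^{N-1}.
\]

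For the gradient term, set $s=u^p/(u^p+v^p)\in(0,1)$ a.e. A direct computation gives
\[
\nabla\eta=\eta\Bigl(s\,\frac{\nabla u}{u}+(1-s)\frac{\nabla v}{v}\Bigr).
\]
Convexity of $|\cdot|^p$ then yields, a.e. in $\Omega$,
\[
|\nabla\eta|^p\le \eta^p\Bigl(s\Bigl|\frac{\nabla u}{u}\Bigr|^p+(1-s)\Bigl|\frac{\nabla v}{v}\Bigr|^p\Bigr)=\tfrac12|\nabla u|^p+\tfrac12|\nabla v|^p.
\]
Adding the two parts gives $\mathcal E(\eta)\le\frac12\mathcal E(u)+\frac12\mathcal E(v)=\lambda_{1,\beta}^{m,w}(\Omega)$. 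Since $\eta$ is admissible, equality holds in the integrated inequality, and hence pointwise a.e. in the convexity inequality wherever $\eta>0$, which is all of $\Omega$.

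Strict convexity of $|\cdot|^p$ for $p>1$, together with $s\in(0,1)$, then forces $\nabla u/u=\nabla v/v$ a.e. in $\Omega$. Equivalently $\nabla\log(u/v)=0$ a.e., and this is legitimate since $\log u,\log v\in W^{1,p}_{\mathrm{loc}}$ by local positivity and continuity. Because $\Omega$ is connected, $u/v$ is a positive constant, which proves the theorem.

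The main technical obstacle is justifying that $\eta\in W^{1,p}(\Omega)$ with the displayed chain rule globally, up to the boundary, where $u$ and $v$ may approach $0$ and the quotients $\nabla u/u$ are not controlled. We plan to handle this through the representation
\[
\eta=\Phi(u,v),\qquad \Phi(a,b)=\bigl((a^p+b^p)/2\bigr)^{1/p}.
\]
The map $\Phi$ is $1$-homogeneous, Lipschitz, and $C^1$ away from the origin. The chain rule for Lipschitz maps of $W^{1,p}$ vector functions therefore gives $\eta\in W^{1,p}(\Omega)$, the gradient formula a.e. on $\{u,v>0\}=\Omega$, and the trace identity. As a safer alternative, one can first replace $u,v$ by $u+\varepsilon,v+\varepsilon$, for which the quotients are bounded, and pass to the limit $\varepsilon\to0$ using dominated convergence and the compactness of \Cref{lem:weighted-form-compactness}.
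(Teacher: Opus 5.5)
Your proof is correct, and it takes a genuinely different route from the paper's.

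The paper uses Picone's identity. It tests the equation for $v$ with $\psi_\varepsilon=u^p/(v+\varepsilon)^{p-1}$, which is admissible because $u\in L^\infty$, and subtracts the equation for $u$ tested with $u$. It then discards the Robin term using the sign of $1-\bigl(\operatorname{Tr}v/(\operatorname{Tr}v+\varepsilon)\bigr)^{p-1}$ and lets $\varepsilon\downarrow0$ by dominated convergence and Fatou. Finally, the equality case of the Allegretto--Huang identity gives $\nabla(u/v)=0$.

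You instead use hidden convexity along $\eta=\bigl(\tfrac12(u^p+v^p)\bigr)^{1/p}$. Since $\eta^p$ and $(\operatorname{Tr}\eta)^p$ are averages of the corresponding quantities for $u$ and $v$, the weighted mass and the weighted Robin term of $\eta$ are exact averages. Only the gradient term produces an inequality, so minimality forces equality in the strict convexity of $|\cdot|^p$, giving $\nabla u/u=\nabla v/v$. This route is purely variational: it uses the Euler equation only through $\mathcal E(u)=\lambda\int_\Omega mu^p\,dx$. It needs no $\varepsilon$-regularization, no limit passage, no boundary sign argument, and no $L^\infty$ bound to make the competitor admissible.

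The step you flag as the main obstacle is harmless. On each compact subset of $\Omega$ the pair $(u,v)$ stays away from the origin, where $\Phi$ is $C^1$, so the classical chain rule gives your gradient formula. Because $\nabla\Phi$ is $0$-homogeneous, $|\nabla\eta|\le C(|\nabla u|+|\nabla v|)$, hence $\eta\in W^{1,p}(\Omega)$. The trace identity follows from smooth approximation and the Lipschitz continuity of $\Phi$, just like the paper's composition rule for $\operatorname{Tr}\psi_\varepsilon$. Your $\varepsilon$-shifted alternative is therefore unnecessary.

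In exchange, the Picone route uses only the weak equation satisfied by $v$, not its minimality. Run with $v$ a positive eigenfunction for an arbitrary eigenvalue $\mu$, the same computation gives $\mu\le\lambda_{1,\beta}^{m,w}(\Omega)$, hence equality. So positive eigenfunctions occur only at the first eigenvalue, which your convexity argument does not give directly.

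Both proofs rely on the interior positivity and continuity of \Cref{thm:first-eigenfunction-positivity} in the last step. There $u/v\in W^{1,p}_{\mathrm{loc}}(\Omega)$ has zero gradient and is therefore constant on the connected set $\Omega$.
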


\begin{proof}
Let $u,v>0$ be first eigenfunctions corresponding to
$\lambda:=\lambda_{1,\beta}^{m,w}(\Omega)$. For $\varepsilon>0$, define
\[
\psi_\varepsilon:=\frac{u^p}{(v+\varepsilon)^{p-1}}.
\]
Because $u,v\in W^{1,p}(\Omega)\cap L^\infty(\Omega)$ and $v+\varepsilon\ge\varepsilon$, one has $\psi_\varepsilon\in W^{1,p}(\Omega)$. Moreover, the composition rule for Sobolev traces gives
\[
\operatorname{Tr}\psi_\varepsilon
=
\frac{|\operatorname{Tr}u|^p}
{(\operatorname{Tr}v+\varepsilon)^{p-1}}.
\]
Testing the equation for $v$ with $\psi_\varepsilon$ and subtracting it from the equation for $u$ tested with $u$ gives
\begin{align*}
&\int_\Omega
\left[
|\nabla u|^p
-
|\nabla v|^{p-2}\nabla v\cdot
\nabla\left(\frac{u^p}{(v+\varepsilon)^{p-1}}\right)
\right]dx
\\
&\quad+
\beta\int_{\partial\Omega}
w|\operatorname{Tr}u|^p
\left[
1-
\left(\frac{\operatorname{Tr}v}{\operatorname{Tr}v+\varepsilon}\right)^{p-1}
\right]d\Haus^{N-1}
\\
&=
\lambda\int_\Omega
mu^p
\left[
1-
\left(\frac{v}{v+\varepsilon}\right)^{p-1}
\right]dx.
\end{align*}
Picone's inequality applied to the pair $(u,v+\varepsilon)$ shows pointwise that the first integrand is nonnegative; separately, the boundary integrand is nonnegative. Consequently the first integral is bounded above by the right-hand side, which tends to zero by dominated convergence. Hence the integral of the Picone integrand tends to zero. At almost every point of $\Omega$, where $u$, $v$, and their weak gradients have pointwise representatives, that integrand converges to the nonnegative algebraic Picone integrand
\begin{equation*}
L_0:=
|\nabla u|^p
-
|\nabla v|^{p-2}\nabla v\cdot
\nabla\left(\frac{u^p}{v^{p-1}}\right).
\end{equation*}
Here the displayed derivative is interpreted locally: on every compact subset of $\Omega$, positivity and continuity give a positive lower bound for $v$, so $u^p/v^{p-1}\in W^{1,p}_{\mathrm{loc}}(\Omega)$. Fatou's lemma now gives $\int_\Omega L_0\,dx=0$. Hence $L_0=0$ almost everywhere. The pointwise equality case in the Allegretto--Huang Picone identity \cite{AllegrettoHuang} yields $\nabla(u/v)=0$ almost everywhere. Since $u/v\in W^{1,p}_{\mathrm{loc}}(\Omega)$ and $\Omega$ is connected, $u=cv$ for some $c>0$; interior continuity makes the proportionality pointwise.
\end{proof}

\subsection{Spectral admissibility criterion}

\begin{theorem}[Finite-exponent spectral admissibility]
\label{thm:spectral-admissibility}
Let $1<p\le N$. Under \eqref{eq:spectral-basic}, \eqref{eq:spectral-positive-mass}, \eqref{eq:m-finite-exponent}, and \eqref{eq:w-subcritical}, the pair $(m,w)$ is spectrally admissible on $\Omega$.
\end{theorem}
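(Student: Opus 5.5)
The proof assembles the four requirements of \Cref{def:spectral-admissibility} from the results already proved in this section. No new analysis is needed; the work is bookkeeping about which eigenfunctions each earlier theorem covers.

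\emph{Item (1).} \Cref{thm:attainment} gives a nonnegative minimizer $u\in W^{1,p}(\Omega)$ with $\int_\Omega mu^p\,dx=1$. This $u$ satisfies \eqref{eq:weak-eigenvalue}, so it is a nonnegative first eigenfunction. \Cref{thm:first-eigenfunction-boundedness} then places it in $L^\infty(\Omega)$.

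\emph{Item (3).} I interpret a ``first eigenfunction'' as a minimizer of \eqref{eq:weighted-rayleigh}. The last assertion of \Cref{thm:attainment} states that every minimizer satisfies \eqref{eq:weak-eigenvalue} for all $\zeta\in W^{1,p}(\Omega)$. The finiteness of the forms, guaranteed by \eqref{eq:m-finite-exponent} and \eqref{eq:w-subcritical}, makes the Lagrange multiplier argument legitimate.

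Under the alternative reading, a first eigenfunction is a weak solution of \eqref{eq:weak-eigenvalue} with $\lambda=\lambda_{1,\beta}^{m,w}(\Omega)$. Testing with $\zeta=u$ gives $\mathcal E(u)=\lambda\mathcal M(u)$, so any such $u$ with $\mathcal M(u)>0$ is a minimizer. The two notions therefore coincide, and I would record this remark.

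\emph{Item (2).} Let $u$ be a nonnegative nontrivial first eigenfunction. It satisfies \eqref{eq:weak-eigenvalue} by item (3), and \Cref{thm:first-eigenfunction-boundedness} gives $u\in L^\infty$. \Cref{thm:first-eigenfunction-positivity} then provides a locally H\"older representative that is strictly positive in the connected domain $\Omega$.

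One subtlety needs a line. A nonnegative eigenfunction might have zero weighted mass, $\mathcal M(u)=0$. Then \eqref{eq:weak-eigenvalue} tested with $u$ gives $\int|\nabla u|^p=0$ and $\int w\,\mathrm{Tr}\,u^p=0$. So $u$ is a constant $c$ with $|c|^p\int_{\partial\Omega}w=0$, and \eqref{eq:spectral-positive-mass} forces $u=0$. Hence every nontrivial eigenfunction has positive weighted mass, and the earlier theorems apply.

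\emph{Item (4).} This is exactly \Cref{thm:first-eigenvalue-simplicity}, applied to two positive first eigenfunctions. Both are bounded by \Cref{thm:first-eigenfunction-boundedness}, as its proof requires for $\psi_\varepsilon\in W^{1,p}$.

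The only step I expect to need care is this consistency of definitions. Specifically, the admissibility items quantify over ``every first eigenfunction'', while the earlier theorems were stated for minimizers or nonnegative eigenfunctions. The identification via testing with $\zeta=u$, together with the exclusion of zero-mass solutions, resolves it. Everything else is direct citation.
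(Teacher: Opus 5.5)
Your proposal is correct and follows the paper's proof, which likewise cites \Cref{thm:attainment}, \Cref{thm:first-eigenfunction-boundedness}, \Cref{thm:first-eigenfunction-positivity}, and \Cref{thm:first-eigenvalue-simplicity} for the four items of \Cref{def:spectral-admissibility}. Your extra bookkeeping is sound and makes explicit what the paper leaves implicit: you identify minimizers with weak solutions at $\lambda_{1,\beta}^{m,w}(\Omega)$ by testing with $\zeta=u$, and you rule out zero-mass nonnegative solutions via \eqref{eq:spectral-positive-mass}.
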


\begin{proof}
Existence and the weak equation follow from \Cref{thm:attainment}. Boundedness follows from \Cref{thm:first-eigenfunction-boundedness}, positivity from \Cref{thm:first-eigenfunction-positivity}, and simplicity from \Cref{thm:first-eigenvalue-simplicity}. These are exactly the requirements in \Cref{def:spectral-admissibility}.
\end{proof}

\subsection{The compatible pure-power pair}

The general criterion immediately gives spectral admissibility for the compatible power pair.
\begin{corollary}
\label{cor:power-spectral}
Let
\[
N\ge2,
\qquad
1<p\le N,
\qquad
-p<b<0,
\]
and let $\Omega\subset\R^N$ be a bounded connected Lipschitz domain. Define finite representatives at the origin by
\[
m_b(0)=w_b(0)=1,
\]
and, for $x\ne0$, set
\begin{equation*}
m_b(x)=|x|^b,
\qquad
w_b(x)=|x|^{b/p'}.
\end{equation*}
Then $(m_b,w_b)$ is spectrally admissible on $\Omega$. Moreover, $w_b\in L^1(\partial\Omega)$.
\end{corollary}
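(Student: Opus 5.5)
The plan is to verify the hypotheses of \Cref{thm:spectral-admissibility} for the pair $(m_b,w_b)$: \eqref{eq:spectral-basic}, \eqref{eq:spectral-positive-mass}, \eqref{eq:m-finite-exponent}, and \eqref{eq:w-subcritical}. Nonnegativity and measurability are immediate. For \eqref{eq:m-finite-exponent}, use that $\Omega\subset B_\rho$ for some $\rho$ and that $|x|^{bq}\in L^1(B_\rho)$ exactly when $bq>-N$, i.e.\ $q<N/|b|$. Since $-p<b<0$, one has $|b|<p$, so $N/p<N/|b|$. Any $q\in(N/p,N/|b|)$ therefore works. The condition $b>-p$ enters precisely at this point. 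Positivity $0<\int_\Omega m_b$ is clear because $m_b>0$ off a single point.

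The main obstacle is the boundary condition \eqref{eq:w-subcritical}, together with $w_b\in L^1(\partial\Omega)$, because $0$ may lie on $\partial\Omega$. If $0\notin\partial\Omega$, then $w_b$ is bounded on $\partial\Omega$ and $r=\infty$ works. In general, use that $\partial\Omega$ is a compact Lipschitz graph. Near any boundary point, $\partial\Omega$ is the graph of a Lipschitz function $g$ over a ball $D$ in $\R^{N-1}$, and the area element is bounded by $\sqrt{1+\mathrm{Lip}(g)^2}$. Since $|(y,g(y))|\ge|y-y_0|$ where $y_0$ is the projection of $0$ (or trivially if $0$ is not on the patch), we have
\[
\int_{\partial\Omega\cap U}|x|^{br/p'}\,d\Haus^{N-1}\le C\int_{D}|y-y_0|^{br/p'}\,dy .
\]
This is finite provided $|b|r/p'<N-1$. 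Covering $\partial\Omega$ by finitely many charts gives a global bound. In the radial setting one could also note directly that $\Haus^{N-1}(\partial\Omega\cap B_s)\le Cs^{N-1}$ and use a dyadic-annuli summation.

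It remains to fit $r$ into the window $(N-1)/(p-1)<r<(N-1)p'/|b|$. Since $p'=p/(p-1)$, the upper bound equals $(N-1)p/((p-1)|b|)$. This exceeds the lower bound exactly when $|b|<p$, again the hypothesis $b>-p$. Any such $r$ gives $w_b\in L^r(\partial\Omega)$. Because $\partial\Omega$ has finite measure, Hölder's inequality then gives $w_b\in L^1(\partial\Omega)$. Finally, $\int_{\partial\Omega}w_b>0$ since $w_b>0$ except at possibly one point, which is $\Haus^{N-1}$-null for $N\ge2$. The values $m_b(0)=w_b(0)=1$ are immaterial on null sets.

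With all hypotheses verified and $1<p\le N$, \Cref{thm:spectral-admissibility} yields spectral admissibility. The only delicate point is the uniform surface estimate near a boundary contact with the origin, which the Lipschitz graph representation handles.
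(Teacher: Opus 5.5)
Your proposal is correct and follows the paper's proof: you choose $q\in(N/p,N/|b|)$ and $r\in\bigl((N-1)/(p-1),\,(N-1)p'/|b|\bigr)$, both windows being nonempty exactly because $b>-p$, and then apply \Cref{thm:spectral-admissibility}. The only minor difference is near a boundary contact $0\in\partial\Omega$: your chartwise projection bound $|x|\ge|y-y_0|$ does the same job as the paper's argument, which uses the upper Ahlfors estimate $\Haus^{N-1}(\partial\Omega\cap B_\rho(x))\le C\rho^{N-1}$ and sums over dyadic annuli.
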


\begin{proof}
Because $b>-p$, one may choose $q$ such that
\[
\frac Np<q<-\frac Nb.
\]
Then $bq>-N$, and hence $|x|^b\in L^q(\Omega)$.
For the boundary density, choose $r$ satisfying
\[
\frac{N-1}{p-1}<r<-\frac{p'(N-1)}b.
\]
Such an $r$ exists precisely because $b>-p$. To verify boundary integrability near a possible contact point $0\in\partial\Omega$, let
\[
A_j:=\partial\Omega\cap\{2^{-j-1}<|x|\le2^{-j}\}.
\]
To see the required upper Ahlfors estimate directly, cover the compact Lipschitz boundary by finitely many graph charts. The area formula in each chart gives
$\Haus^{N-1}(\partial\Omega\cap B_\rho(x))\le C\rho^{N-1}$, with a uniform $C$ after taking the maximum over the finite atlas. Therefore
$\Haus^{N-1}(A_j)\le C2^{-j(N-1)}$, and
\[
\int_{\partial\Omega\cap B_1}|x|^{br/p'}\,d\Haus^{N-1}
\le
C\sum_{j=0}^\infty
2^{-j(N-1+br/p')},
\]
which converges because $N-1+br/p'>0$. Away from the origin the density is bounded. Thus $w_b\in L^r(\partial\Omega)\subset L^1(\partial\Omega)$, and the conclusion follows from \Cref{thm:spectral-admissibility}.
\end{proof}

The threshold $b=-p$ separates the admissible range from the Hardy-critical behavior.
\begin{proposition}
\label{prop:hardy-threshold}
Assume $1<p\le N$, $0\in\Omega$, and consider the interior mass $|x|^b\,dx$. If $p<N$ and $-N<b<-p$, then the infimum of the corresponding Robin quotient is zero. If $p<N$ and $b=-p$, the weighted mass embedding is not compact. If $p=N$, the endpoint $b=-p=-N$ is not locally integrable and belongs to a different, logarithmic critical problem.
\end{proposition}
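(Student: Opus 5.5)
The three assertions are independent, and I would prove each with an explicit construction centered at the origin. Throughout, fix $\delta>0$ with $B_{2\delta}\Subset\Omega$, so that test functions supported in $B_{2\delta}$ have vanishing trace. The Robin term then plays no role, and the Robin quotient reduces to
\[
\frac{\int|\nabla u|^p\,dx}{\int|x|^b|u|^p\,dx}.
\]

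\emph{Case $p<N$, $-N<b<-p$.} I would use concentrating radial profiles. Take a cutoff $\eta\in C_c^\infty(B_{2\delta})$ with $\eta=1$ on $B_\delta$, and set $u_\alpha(x)=|x|^{-\alpha}\eta(x)$. I want $u_\alpha\in W^{1,p}$ while the weighted mass diverges. For $u_\alpha\in W^{1,p}$ one needs $(\alpha+1)p<N$, that is, $\alpha<(N-p)/p$. For $\int|x|^{b-\alpha p}=\infty$ one needs $\alpha\ge(N+b)/p$. Since $b<-p$, we have $(N+b)/p<(N-p)/p$, so such $\alpha$ exists. A function in $W^{1,p}$ with infinite weighted mass does not give a zero quotient directly, so I would truncate instead: take $\alpha\in\bigl((N+b)/p,(N-p)/p\bigr)$ and replace $|x|^{-\alpha}$ by $\min\{|x|^{-\alpha},\varepsilon^{-\alpha}\}$. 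As $\varepsilon\to0$, the numerator stays bounded while the denominator tends to infinity, so the quotient tends to zero. Alternatively, scaling gives the cleaner picture. Setting $u_\varepsilon(x)=\phi(x/\varepsilon)$ with $\phi$ fixed, the quotient scales like $\varepsilon^{-p-b}$, which tends to zero as $\varepsilon\to0$ because $b<-p$. I would present the scaling argument, checking only that $\int|x|^b|\phi|^p<\infty$ for $\phi\in C_c^\infty(B_1\setminus\{0\})$, say.

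\emph{Case $p<N$, $b=-p$.} Here the same scaling is exactly invariant: the quotient of $\phi_\varepsilon=\varepsilon^{(p-N)/p}\phi(\cdot/\varepsilon)$ is independent of $\varepsilon$, and with this normalization $\int|x|^{-p}|\phi_\varepsilon|^p$ is constant. Take $\phi\in C_c^\infty(B_1\setminus\{0\})$. Then $(\phi_\varepsilon)$ is bounded in $W^{1,p}(\Omega)$: the gradient norm is invariant, and the $L^p$ norm scales like $\varepsilon^{p}\to0$. Also $\phi_\varepsilon\rightharpoonup0$ weakly, since the supports shrink to the origin. However, the weighted masses stay equal to a fixed positive constant, so the mass functional is not weakly continuous. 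Hence the embedding $W^{1,p}(\Omega)\to L^p(|x|^{-p}dx)$ is not compact, because a compact embedding would force strong, hence mass, convergence to $0$.

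\emph{Case $p=N$.} This is immediate, since $\int_{B_\delta}|x|^{-N}\,dx=\sigma_N\int_0^\delta r^{-1}\,dr=\infty$. So $m$ fails to be locally integrable, Assumption~\ref{ass:radial-volume-density} breaks down, and even constant functions have infinite mass. The only substantive point is the truncation/scaling bookkeeping in the first case, namely making sure the test functions lie in $W^{1,p}$ with finite positive weighted mass. Using a fixed profile supported away from $0$ avoids this issue entirely, so I expect no real obstacle.
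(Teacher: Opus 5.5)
Your proposal is correct and follows the paper's proof. The paper also uses the dilation $\eta(x/\varepsilon)$ of a profile supported in an annulus to get the quotient $C_\eta\varepsilon^{-p-b}\to0$, the normalized family $\varepsilon^{-(N-p)/p}\eta(x/\varepsilon)$ (weakly null, with constant gradient energy and $|x|^{-p}$-mass) at $b=-p$, and the non-integrability of $|x|^{-N}$ at $p=N$. Your truncated power-profile alternative is also valid but not needed.
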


\begin{proof}
Choose $\eta\in C_c^\infty(B_1\setminus\overline{B_{1/2}})$, $\eta\ne0$, and, for sufficiently small $\varepsilon$, set $u_\varepsilon(x)=\eta(x/\varepsilon)$. Its boundary term vanishes, while
\[
\frac{\int_\Omega|\nabla u_\varepsilon|^p\,dx}
{\int_\Omega|x|^b|u_\varepsilon|^p\,dx}
=C_\eta\varepsilon^{-p-b}.
\]
This tends to zero when $b<-p$. At $b=-p$ and $p<N$, the normalized family
\[
v_\varepsilon(x):=\varepsilon^{-(N-p)/p}\eta(x/\varepsilon)
\]
has both $\int|\nabla v_\varepsilon|^p$ and $\int|x|^{-p}|v_\varepsilon|^p$ independent of $\varepsilon$, but $v_\varepsilon\rightharpoonup0$ in $W^{1,p}$; hence the mass embedding is not compact. The classical Hardy inequality
\[
\left(\frac{N-p}{p}\right)^p
\int_{\R^N}|x|^{-p}|v|^p\,dx
\le
\int_{\R^N}|\nabla v|^p\,dx,
\qquad v\in C_c^\infty(\R^N),
\]
identifies the same critical scaling. For $p=N$, $|x|^{-N}\notin L^1_{\mathrm{loc}}$.
\end{proof}

\begin{remark}
For $b<0$, the inequalities needed to choose the spectral exponents $q$ and $r$ in \Cref{cor:power-spectral} are both equivalent to $b>-p$. The same condition is $\delta=1+b/p>0$ in the radial ODE and makes the dilation factors $q^{-p-b}$ and $q^{-1-b/p}$ strictly smaller than one. By contrast, the geometric reduction only requires $b>-N$ when $1<p\le N$.
\end{remark}

On centered balls, the first eigenfunction is radial.
\begin{corollary}
\label{cor:radiality-ball}
Let $m$ and $w$ be radial and satisfy the hypotheses of \Cref{thm:spectral-admissibility} on a centered ball $B_R$. Then the positive first eigenfunction on $B_R$ is radial.
\end{corollary}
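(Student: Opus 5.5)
The natural route is rotation invariance combined with simplicity of the positive first eigenfunction, \Cref{thm:first-eigenvalue-simplicity}. Let $u>0$ be the positive first eigenfunction on $B_R$ supplied by \Cref{thm:spectral-admissibility}. Fix an orthogonal matrix $Q\in O(N)$ and set $u_Q(x):=u(Qx)$. The plan is to show that $u_Q$ is again a positive first eigenfunction, so $u_Q=c_Qu$ for some $c_Q>0$, and then to use normalization to force $c_Q=1$.

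First, since $Q$ is an isometry mapping $B_R$ onto itself and $\partial B_R$ onto itself, we have $u_Q\in W^{1,p}(B_R)$ and $\nabla u_Q(x)=Q^{T}\nabla u(Qx)$, so $|\nabla u_Q|=|\nabla u|\circ Q$. The trace commutes with composition by $Q$: this holds for smooth functions and extends by density and continuity of the trace operator, giving $\operatorname{Tr}u_Q=(\operatorname{Tr}u)\circ Q$. The change of variables $y=Qx$ has unit Jacobian in $B_R$, and $\Haus^{N-1}$ on $\partial B_R$ is $O(N)$-invariant. Because $m$ and $w$ are radial, $m\circ Q=m$ and $w\circ Q=w$. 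Consequently
\[
\mathcal E_{\beta,B_R}^{m,w}(u_Q)=\mathcal E_{\beta,B_R}^{m,w}(u),
\qquad
\mathcal M_{B_R}^m(u_Q)=\mathcal M_{B_R}^m(u).
\]
Hence $u_Q$ attains the infimum \eqref{eq:weighted-rayleigh}. By \Cref{thm:attainment}, every minimizer satisfies \eqref{eq:weak-eigenvalue}, so $u_Q$ is a first eigenfunction, and it is positive since $u$ is. (Equivalently, one could transform the weak equation directly by substituting the test function $\zeta\circ Q^{-1}$.)

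By \Cref{thm:first-eigenvalue-simplicity}, $u_Q=c_Qu$ with $c_Q>0$. Taking weighted masses gives $\mathcal M^m(u)=c_Q^p\,\mathcal M^m(u)$, and since $\mathcal M^m(u)>0$ this forces $c_Q=1$. Thus $u\circ Q=u$ almost everywhere for every $Q\in O(N)$. The locally H\"older-continuous representative from \Cref{thm:first-eigenfunction-positivity} makes this equality hold pointwise in $B_R$. Since $O(N)$ acts transitively on each sphere $\{|x|=r\}$, it follows that $u(x)=Z(|x|)$.

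The only step requiring care is the trace identity $\operatorname{Tr}(u\circ Q)=(\operatorname{Tr}u)\circ Q$, together with the invariance of the weighted boundary integral when $w$ is merely measurable. For the trace identity, approximate $u$ by functions in $C^\infty(\overline{B_R})$ and pass to the limit in $L^p(\partial B_R)$ using continuity of the trace operator. For the weighted boundary integral, radiality of $w$ means $w$ is constant on $\partial B_R$ up to the fixed representative, so the invariance is immediate.
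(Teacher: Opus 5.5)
Your proposal is correct and follows the paper's argument: rotation invariance of the weighted energy and mass, simplicity of the positive first eigenfunction, and the equal weighted masses forcing $c_Q=1$. The extra details you supply, namely that the trace commutes with rotations and that the continuous representative upgrades a.e. invariance to pointwise invariance, are routine justifications that the paper leaves implicit.
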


\begin{proof}
Let $z$ be the positive first eigenfunction, normalized by
$\int_{B_R}mz^p\,dx=1$. For every orthogonal transformation $O$, the function $z_O(x):=z(Ox)$ has the same weighted mass and weighted Robin energy as $z$. Hence $z_O$ is also a positive normalized first eigenfunction. By simplicity, $z_O=c_Oz$ for some $c_O>0$; the common normalization gives $c_O=1$. Therefore $z(Ox)=z(x)$ for every orthogonal $O$, and $z$ is radial.
\end{proof}

\begin{remark}
When $p<N$, the strict inequalities $q>N/p$ and $r>(N-1)/(p-1)$ place the weighted mass and boundary forms below the critical Sobolev and trace exponents. When $p=N$, all finite target exponents lie below the limiting Sobolev and trace embeddings. Thus the same direct compactness and iteration argument covers the endpoint $p=N$.
\end{remark}

\section{The weighted double-density isoperimetric principle}

Let $m$ satisfy \Cref{ass:radial-volume-density}, set $d\mu=m(x)\,dx$ and $w=m^{1/p'}$, and recall
\[
J(v):=P_w(B_v^\sharp).
\]

The geometric input required by the transfer is the following compatible double-density inequality.
\begin{assumption}
\label{ass:double-density}
For every $v>0$,
\[
0<J(v)<\infty.
\]
Moreover, every bounded finite-perimeter set $E\subset\R^N$ with $0<\mu(E)<\infty$ satisfies
\begin{equation}
P_w(E)\ge P_w(E^\sharp)=J(\mu(E)).
\label{eq:double-density}
\end{equation}
The inequality is understood in the extended sense.
\end{assumption}

\begin{remark}
The two densities are compatible rather than independent: $m$ determines weighted volume, whereas $w=m^{1/p'}$ determines perimeter. No homogeneity assumption is made. Only bounded sets are required because the transfer proof applies the inequality to superlevel sets inside a bounded domain.
\end{remark}

Applied to superlevel sets, the preceding assumption gives the perimeter comparison used below.
\begin{proposition}
\label{prop:superlevel-isop}
Suppose that \Cref{ass:double-density} holds. Let $U_t=\{x\in\Omega:u(x)>t\}$ have finite perimeter and positive weighted volume, and let $r(t)>0$ satisfy
\[
\mu(B_{r(t)})=\mu(U_t).
\]
Then
\[
P_w(U_t)\ge P_w(B_{r(t)}).
\]
\end{proposition}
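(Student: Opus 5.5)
The statement is essentially a direct specialization of \Cref{ass:double-density}, so the plan is to verify its hypotheses for $E:=U_t$ and then identify the comparison ball.

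First, $U_t\subset\Omega$, and $\Omega$ is bounded, so $U_t$ is a bounded set. By hypothesis it has finite perimeter and $0<\mu(U_t)$. Since $\Omega$ is bounded and $m\in L^1_{\mathrm{loc}}$ by \Cref{ass:radial-volume-density}, we also have $\mu(U_t)\le\mu(\Omega)<\infty$. Thus $U_t$ is an admissible set in \eqref{eq:double-density}, and
\[
P_w(U_t)\ge P_w(U_t^\sharp)=J(\mu(U_t)).
\]

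Second, we identify $U_t^\sharp$ with $B_{r(t)}$. By definition, $U_t^\sharp=B_{\varrho(\mu(U_t))}$. The hypothesis $\mu(B_{r(t)})=\mu(U_t)$ says that $V(r(t))=\mu(U_t)$. Since $V$ is strictly increasing, and hence injective, $r(t)=V^{-1}(\mu(U_t))=\varrho(\mu(U_t))$. Therefore $B_{r(t)}=U_t^\sharp$, and the inequality becomes $P_w(U_t)\ge P_w(B_{r(t)})$, understood in the extended sense.

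There is no genuine obstacle here. The only points needing care are the finiteness of $\mu(U_t)$, which comes from local integrability together with boundedness of $\Omega$, and the uniqueness of the radius $r(t)$, which comes from strict monotonicity of $V$. No measurability or regularity of $u$ beyond the standing assumption that $U_t$ has finite perimeter is needed.
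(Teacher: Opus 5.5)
Your proposal is correct and matches the paper's proof, which is the one-line observation that $U_t^\sharp=B_{r(t)}$, so the claim is exactly \eqref{eq:double-density}. You additionally spell out two points the paper leaves implicit: that $U_t$ is admissible (bounded, with $0<\mu(U_t)\le\mu(\Omega)<\infty$), and that $r(t)=\varrho(\mu(U_t))$ because $V$ is strictly increasing.
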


\begin{proof}
Since $U_t^\sharp=B_{r(t)}$, this is exactly \eqref{eq:double-density}.
\end{proof}

\section{The weighted Bossel--Daners transfer}

Throughout this section,
\[
N\ge2,
\qquad
1<p<\infty,
\qquad
p'=\frac{p}{p-1},
\qquad
\beta>0.
\]
In addition to \Cref{ass:radial-volume-density}, we assume that the fixed radial representative
$m(x)=\mathfrak m(|x|)$ satisfies
\begin{equation}
0<\mathfrak m(r)<\infty
\qquad\text{for every }r>0.
\label{eq:positive-radial-density}
\end{equation}
The value at the origin is irrelevant. Set $w=m^{1/p'}$. Thus $w$ is finite and strictly positive on $\R^N\setminus\{0\}$.

Let $\Omega\subset\R^N$ be a bounded connected Lipschitz domain with
$0<\mu(\Omega)<\infty$, and assume
\begin{equation}
\int_{\partial\Omega}w\,d\Haus^{N-1}<\infty.
\label{eq:boundary-weight-integrable}
\end{equation}
Assume that $(m,w)$ is spectrally admissible on $\Omega$, and let $u$ be its positive first eigenfunction, normalized by
\begin{equation*}
\|u\|_{L^\infty(\Omega)}=1.
\end{equation*}
We use the precise Sobolev representative $\widetilde u$ in the interior and the Sobolev trace $\operatorname{Tr}u$ on $\partial\Omega$.

\subsection{Weighted level-set functional}

For $0<t<1$, define
\begin{equation*}
U_t:=\{x\in\Omega:\widetilde u(x)>t\},
\qquad
\Gamma_t:=\{x\in\partial\Omega:\operatorname{Tr}u(x)>t\}.
\end{equation*}
For almost every $t$, $U_t$ has finite perimeter in $\Omega$. At such a level, write
\begin{equation*}
\Sigma_t:=\partial^*U_t\cap\Omega.
\end{equation*}
For a nonnegative Borel function $\varphi$, define
\begin{equation*}
\mathcal H_\Omega^w(U_t,\varphi)
:=
\frac{1}{\mu(U_t)}
\left[
\int_{\Sigma_t}\varphi w\,d\Haus^{N-1}
+
\beta\int_{\Gamma_t}w\,d\Haus^{N-1}
-
(p-1)\int_{U_t}\varphi^{p'}\,d\mu
\right]
\end{equation*}
whenever the terms on the right are finite.

The eigenvalue admits the following weighted level-set representation.
\begin{lemma}
\label{lem:level-set-identity}
Define
\begin{equation*}
\varphi_u(x):=
\frac{|\nabla u(x)|^{p-1}}
{w(x)u(x)^{p-1}}
\end{equation*}
at points $x\ne0$ where the Sobolev gradient is defined. Set $\varphi_u=0$ at the origin and on the negligible exceptional set where the quotient is undefined. Then, for almost every $t\in(0,1)$,
\begin{equation*}
\lambda_{1,\beta}^{m,w}(\Omega)
=
\mathcal H_\Omega^w(U_t,\varphi_u).
\end{equation*}
\end{lemma}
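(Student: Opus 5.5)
The plan is to test the weak eigenvalue equation \eqref{eq:weak-eigenvalue} with a truncated version of $u^{1-p}$ localized to the superlevel set, and then to rewrite the resulting gradient integral via the coarea formula. This is the weighted analogue of the Bossel--Daners identity. Write $\lambda:=\lambda_{1,\beta}^{m,w}(\Omega)$.

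For $0<t<1$ and $\varepsilon>0$, I would use the test function
\[
\zeta_\varepsilon:=\bigl(\max\{u,t\}^{1-p}-\ldots\bigr)
\]
or, more concretely, $\zeta=\min\{(u-t)^+/\varepsilon,1\}\,u^{1-p}$. On the support, $u>t>0$, so $u^{1-p}$ is bounded and Lipschitz in $u$. Hence $\zeta\in W^{1,p}(\Omega)$, with trace $\min\{(\operatorname{Tr}u-t)^+/\varepsilon,1\}(\operatorname{Tr}u)^{1-p}$.

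Substituting into \eqref{eq:weak-eigenvalue} produces three terms. The first is a layer term
\[
\frac1\varepsilon\int_{\{t<u<t+\varepsilon\}}|\nabla u|^p u^{1-p}\,dx .
\]
The second is a bulk term
\[
-(p-1)\int |\nabla u|^p u^{-p}\min\{\cdot,1\}\,dx .
\]
The third is the boundary term $\beta\int_{\partial\Omega}w\,\min\{\cdot,1\}\,d\Haus^{N-1}$, which arises because $|\operatorname{Tr}u|^{p-2}\operatorname{Tr}u\cdot(\operatorname{Tr}u)^{1-p}=1$. The right-hand side is $\lambda\int m\min\{\cdot,1\}\,dx$.

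Letting $\varepsilon\to0$ is done by monotone and dominated convergence:
\begin{itemize}
\item the right side tends to $\lambda\mu(U_t)$, using $\mu\ll\mathcal L^N$ and that $\widetilde u=u$ a.e.;
\item the boundary term tends to $\beta\int_{\Gamma_t}w$, using \eqref{eq:boundary-weight-integrable};
\item the bulk term tends to $-(p-1)\int_{U_t}|\nabla u|^pu^{-p}\,dx$.
\end{itemize}
For the bulk term I use the compatibility $w^{p'}=m$. Since $\varphi_u^{p'}=|\nabla u|^p/(w^{p'}u^p)=|\nabla u|^p/(m u^p)$, we get $|\nabla u|^pu^{-p}=\varphi_u^{p'}m$, which gives exactly $-(p-1)\int_{U_t}\varphi_u^{p'}\,d\mu$. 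This uses $0<\mathfrak m<\infty$ off the origin, by \eqref{eq:positive-radial-density}, and that the origin is negligible. For finiteness I would note that $\nabla u=0$ a.e. on level sets, and argue that the bulk integral is finite because every other term in the identity is.

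For the layer term I apply the coarea formula to $u\in W^{1,1}(\Omega)$ with $g=|\nabla u|^{p-1}u^{1-p}$:
\[
\frac1\varepsilon\int_t^{t+\varepsilon}\Bigl(\int_{\partial^*U_s\cap\Omega}|\nabla u|^{p-1}u^{1-p}\,d\Haus^{N-1}\Bigr)ds .
\]
By the Lebesgue differentiation theorem this tends, for a.e. $t$, to $\int_{\Sigma_t}|\nabla u|^{p-1}u^{1-p}$. Since $\widetilde u=t$ $\Haus^{N-1}$-a.e. on $\Sigma_t$ for a.e. $t$, this equals $\int_{\Sigma_t}\varphi_u w\,d\Haus^{N-1}$.

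\textbf{Main obstacle.} The delicate point is making this last step rigorous. Lebesgue points of the map $s\mapsto\int_{\partial^*U_s}g$ require that this function lie in $L^1_{\mathrm{loc}}(0,1)$. That follows from the coarea formula, because $\int_{\{u>\tau\}}|\nabla u|^pu^{1-p}<\infty$ for $\tau>0$. One must also check the pointwise evaluation $\nabla u$ on $\Sigma_t$: the coarea formula is understood with the Borel representative of $\nabla u$, and it is valid for $\Haus^{N-1}$-a.e. point of $\partial^*U_s$ for a.e. $s$.

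Intersecting the resulting full-measure sets of $t$ and dividing by $\mu(U_t)>0$ yields $\lambda=\mathcal H_\Omega^w(U_t,\varphi_u)$. Here $\mu(U_t)>0$ holds for $t<1$ because $\|u\|_\infty=1$ and $\mathfrak m>0$.
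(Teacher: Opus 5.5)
Your proposal is correct and follows essentially the same route as the paper: your test function $\min\{(u-t)^+/\varepsilon,1\}u^{1-p}$ is exactly the paper's $h_{\varepsilon,t}(u)$. As in the paper, you treat the layer term by coarea plus Lebesgue differentiation, pass to the limit in the remaining terms by monotone or dominated convergence, and use $w^{p'}=m$ to identify $\varphi_u^{p'}\,d\mu$ with $|\nabla u|^pu^{-p}\,dx$. The only differences are cosmetic. You merge the paper's two bulk pieces into a single monotone limit. And finiteness of $\int_{U_t}|\nabla u|^pu^{-p}\,dx$ is simpler than your indirect argument: it follows at once from $u>t$ on $U_t$.
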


\begin{proof}
Fix $t\in(0,1)$ and $0<\varepsilon<t$. Define the Lipschitz function
\[
h_{\varepsilon,t}(s):=
\begin{cases}
0,&0\le s\le t,\\
s^{1-p}(s-t)/\varepsilon,&t<s<t+\varepsilon,\\
s^{1-p},&s\ge t+\varepsilon,
\end{cases}
\]
and set
\[
\psi_{\varepsilon,t}:=h_{\varepsilon,t}(u).
\]
Then $\psi_{\varepsilon,t}\in W^{1,p}(\Omega)$ and is admissible in the weak eigenvalue equation. This piecewise definition also fixes its value on $\{u=0\}$. By the Sobolev chain rule, its contribution to the gradient term is
\begin{align*}
I_{\varepsilon,t}
:={}&
\int_\Omega
|\nabla u|^{p-2}\nabla u\cdot\nabla\psi_{\varepsilon,t}\,dx
\\
={}&
\frac1\varepsilon
\int_{\{t<u<t+\varepsilon\}}
u^{1-p}|\nabla u|^p\,dx
-
\frac{p-1}{\varepsilon}
\int_{\{t<u<t+\varepsilon\}}
(u-t)u^{-p}|\nabla u|^p\,dx
\\
&-
(p-1)
\int_{\{u\ge t+\varepsilon\}}
u^{-p}|\nabla u|^p\,dx.
\end{align*}
For the first term, the coarea formula gives
\[
\frac1\varepsilon
\int_t^{t+\varepsilon}
s^{1-p}
\left(
\int_{\Sigma_s}|\nabla u|^{p-1}\,d\Haus^{N-1}
\right)ds.
\]
Lebesgue differentiation therefore identifies its limit for almost every $t$. The absolute value of the second term is bounded by
\[
(p-1)t^{-p}
\int_{\{t<u<t+\varepsilon\}}|\nabla u|^p\,dx,
\]
which tends to zero by absolute continuity of the integral. The last term converges by monotone convergence. Consequently, for almost every $t$,
\begin{align*}
\lim_{\varepsilon\downarrow0}
I_{\varepsilon,t}
={}&
\int_{\Sigma_t}
\frac{|\nabla u|^{p-1}}{u^{p-1}}\,d\Haus^{N-1}
-(p-1)
\int_{U_t}\frac{|\nabla u|^p}{u^p}\,dx.
\end{align*}
On the boundary,
$(\operatorname{Tr}u)^{p-1}\operatorname{Tr}\psi_{\varepsilon,t}
\to\chi_{\Gamma_t}$ almost everywhere and is bounded by $1$. Hence
\eqref{eq:boundary-weight-integrable} and dominated convergence yield
\[
\beta\int_{\partial\Omega}
w(\operatorname{Tr}u)^{p-1}\operatorname{Tr}\psi_{\varepsilon,t}\,d\Haus^{N-1}
\longrightarrow
\beta\int_{\Gamma_t}w\,d\Haus^{N-1}.
\]
The right-hand side of the eigenvalue equation converges to
$\lambda_{1,\beta}^{m,w}(\Omega)\mu(U_t)$. Finally,
\[
\varphi_uw=\frac{|\nabla u|^{p-1}}{u^{p-1}},
\qquad
\varphi_u^{p'}\,d\mu=\frac{|\nabla u|^p}{u^p}\,dx,
\]
because $w^{p'}=m$. Division by $\mu(U_t)$ proves the result.
\end{proof}

\subsection{The weighted selection lemma}

The next lemma selects a level at which the comparison function does not increase the level-set functional.
\begin{lemma}
\label{lem:selection}
Let $\varphi:\Omega\to[0,\infty)$ be Borel measurable and satisfy
\begin{equation*}
\varphi\in L^{p'}(\Omega,d\mu).
\end{equation*}
For $t\in(0,1)$, define
\begin{equation}
F(t):=
\int_{U_t}
(\varphi-\varphi_u)
\frac{|\nabla u|}{u}w\,dx.
\label{eq:F-def}
\end{equation}
Then $F(t)$ is finite for every $t>0$ and is locally absolutely continuous on $(0,1)$. Moreover, for almost every $t\in(0,1)$,
\begin{equation}
\mathcal H_\Omega^w(U_t,\varphi)
\le
\lambda_{1,\beta}^{m,w}(\Omega)
-
\frac{1}{\mu(U_t)t^{p-1}}
\frac{d}{dt}\bigl(t^pF(t)\bigr).
\label{eq:selection-differential}
\end{equation}
Consequently, given any full-measure subset $\mathcal T\subset(0,1)$, there exists $t\in\mathcal T$ such that
\begin{equation}
\mathcal H_\Omega^w(U_t,\varphi)
\le
\lambda_{1,\beta}^{m,w}(\Omega).
\label{eq:selection-conclusion}
\end{equation}
\end{lemma}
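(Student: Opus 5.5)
We adapt the classical Bossel--Daners selection argument (as in \cite{BucurDaners} and Propositions~3.1--3.3 of \cite{ChenWangZhu}) to the measure $d\mu=m\,dx$ and the compatible density $w=m^{1/p'}$. The plan has three steps: finiteness and absolute continuity of $F$, the pointwise comparison \eqref{eq:selection-differential}, and a contradiction argument giving \eqref{eq:selection-conclusion}.

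\textbf{Finiteness and absolute continuity of $F$.} On $U_s$ with $s>0$ we have $u>s$. Since $w\,dx=m^{1/p'}dx$, Hölder's inequality in $L^{p'}(d\mu)\times L^p(d\mu)$ gives
\[
\int_{U_s}\varphi\frac{|\nabla u|}{u}w\,dx
\le
\|\varphi\|_{L^{p'}(d\mu)}
\left(\int_{U_s}\frac{|\nabla u|^p}{u^p}\,dx\right)^{1/p}
\le
s^{-1}\|\varphi\|_{L^{p'}(d\mu)}\|\nabla u\|_{L^p}.
\]
Here we used $w^{-p}m\cdot m^{p/p'}\cdots$; concretely, $\varphi w|\nabla u|/u=\varphi m^{1/p'}\cdot m^{-1/p'}\,(m^{1/p'}|\nabla u|/u)$, and the exponents should be checked so that the second factor is exactly $\int|\nabla u|^p u^{-p}dx$. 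Similarly,
\[
\varphi_u\frac{|\nabla u|}{u}w=\frac{|\nabla u|^p}{u^p}\in L^1(U_s).
\]
Hence $F(t)$ is finite for every $t>0$. The coarea formula then writes
\[
F(t)=\int_t^1\left(\int_{\Sigma_s}(\varphi-\varphi_u)\frac{w}{u}\,d\Haus^{N-1}\right)ds,
\]
with an integrable integrand on $[t_0,1]$, since $u=s$ on $\Sigma_s$. This gives local absolute continuity, together with
\[
-F'(t)=t^{-1}\int_{\Sigma_t}(\varphi-\varphi_u)w\,d\Haus^{N-1}\quad\text{for a.e. }t.
\]
The set $\{\nabla u=0\}$ contributes nothing to $F$, since $|\nabla u|$ appears as a factor, so plateaus cause no trouble here.

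\textbf{The pointwise comparison.} Subtract the identity of \Cref{lem:level-set-identity} from the definition of $\mathcal H_\Omega^w(U_t,\varphi)$. This gives
\[
\mu(U_t)\bigl[\mathcal H(U_t,\varphi)-\lambda\bigr]
=\int_{\Sigma_t}(\varphi-\varphi_u)w-(p-1)\int_{U_t}(\varphi^{p'}-\varphi_u^{p'})\,d\mu.
\]
Young's inequality gives $\varphi^{p'}-\varphi_u^{p'}\ge p'\varphi_u^{p'-1}(\varphi-\varphi_u)$, and the compatibility identity is
\[
(p-1)p'\varphi_u^{p'-1}m=p\,\varphi_u^{1/(p-1)}w^{p'}=p\,\frac{|\nabla u|}{u}w.
\]
Together these yield
\[
(p-1)\int_{U_t}(\varphi^{p'}-\varphi_u^{p'})\,d\mu\ge pF(t).
\]
Substituting the first piece $=-tF'(t)$ gives
\[
\mu(U_t)\bigl[\mathcal H-\lambda\bigr]\le -tF'(t)-pF(t)=-t^{1-p}\frac{d}{dt}\bigl(t^pF\bigr),
\]
which is \eqref{eq:selection-differential}. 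The Young step needs $\varphi^{p'}\in L^1(d\mu)$, which is exactly the hypothesis, and $\varphi_u^{p'}\in L^1(U_t,d\mu)$, which holds because $\varphi_u^{p'}m=|\nabla u|^p/u^p$.

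\textbf{Selection.} Suppose \eqref{eq:selection-conclusion} fails for a.e.\ $t$ in $\mathcal T$. Then $\frac{d}{dt}(t^pF)<0$ a.e., so $t^pF(t)$ is strictly decreasing on $(0,1)$. We need endpoint information. At $t\to1^-$, $F(t)\to0$ because $\mu(U_t)$ shrinks and the integrand is integrable on $U_{1/2}$; hence $t^pF>0$ on $(0,1)$. At $t\to0^+$, the estimate from the first step gives
\[
|t^pF(t)|\le t^p\int_{U_t}\Bigl(\varphi\,\frac{w|\nabla u|}{u}+\frac{|\nabla u|^p}{u^p}\Bigr).
\]
Here we bound $\int_{U_t}|\nabla u|^pu^{-p}\le\int_{U_t}|\nabla u|^p\,(u\vee t)^{-p}$ and note that $t^p(u\vee t)^{-p}\le1$ tends to $0$ pointwise where $u>0$. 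By dominated convergence (and Hölder for the $\varphi$ term) we get $t^pF(t)\to0$. This contradicts the fact that a strictly decreasing function tends to $0$ at $0^+$ yet is positive afterward.

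\textbf{Main obstacle.} The delicate point is the limit at $t\to1^-$ when $u$ has a maximum plateau $\{u=1\}$ of positive measure. There $\nabla u=0$ a.e., so $F$ still tends to $0$, but the level sets $U_t$ may become $\mu$-null. Hence $\mathcal H$ is undefined for $t$ near $1$, and $\mu(U_t)>0$ must be restricted to $t<\operatorname{ess\,sup}$, which is $1$ by normalization. The argument should be run only on the range where $\mu(U_t)>0$.
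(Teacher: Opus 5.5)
Your proposal is correct and follows the paper's proof essentially step for step: H\"older with $w^{p'}=m$ for finiteness, the coarea formula for $F'$, the tangent-line convexity bound combined with $\varphi_u^{p'-1}\,d\mu=\frac{|\nabla u|}{u}w\,dx$, and the contradiction from a strictly decreasing $G(t)=t^pF(t)$ that is positive and tends to $0$ as $t\downarrow0$. The only point to correct is your closing ``main obstacle'', which does not arise: since $\|u\|_{L^\infty}=1$ and $m>0$ a.e., one has $\mu(U_t)>0$ for every $t<1$ (a maximum plateau only gives $\mu(U_t)\ge\mu(\{u=1\})>0$), and $F(t)\to0$ as $t\uparrow1$ because $\nabla u=0$ a.e.\ on $\{u=1\}$, not because $\mu(U_t)$ shrinks.
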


\begin{proof}
For $t>0$, H\"older's inequality and $w^{p'}=m$ give
\[
\int_{U_t}\varphi\frac{|\nabla u|}{u}w\,dx
\le
\left(\int_{U_t}\varphi^{p'}\,d\mu\right)^{1/p'}
\left(\int_{U_t}\frac{|\nabla u|^p}{u^p}\,dx\right)^{1/p}<\infty.
\]
Also,
\[
\int_{U_t}\varphi_u\frac{|\nabla u|}{u}w\,dx
=
\int_{U_t}\frac{|\nabla u|^p}{u^p}\,dx<\infty.
\]
Thus $F(t)$ is well defined.

By \Cref{lem:level-set-identity},
\begin{align*}
\mu(U_t)
\bigl(\mathcal H_\Omega^w(U_t,\varphi)-\lambda_{1,\beta}^{m,w}(\Omega)\bigr)
={}&
\int_{\Sigma_t}(\varphi-\varphi_u)w\,d\Haus^{N-1}
-(p-1)\int_{U_t}(\varphi^{p'}-\varphi_u^{p'})\,d\mu.
\end{align*}
Convexity gives
\[
(p-1)(\varphi^{p'}-\varphi_u^{p'})
\ge
p\varphi_u^{p'-1}(\varphi-\varphi_u),
\]
and
\[
\varphi_u^{p'-1}\,d\mu
=
\frac{|\nabla u|}{u}w\,dx.
\]
Hence
\[
\mu(U_t)
\bigl(\mathcal H_\Omega^w(U_t,\varphi)-\lambda_{1,\beta}^{m,w}(\Omega)\bigr)
\le
\int_{\Sigma_t}(\varphi-\varphi_u)w\,d\Haus^{N-1}-pF(t).
\]
On compact subintervals of $(0,1)$ the integrand in \eqref{eq:F-def} is absolutely integrable. The coarea formula yields
\[
F(t)=
\int_t^1\frac1\tau
\int_{\Sigma_\tau}(\varphi-\varphi_u)w\,d\Haus^{N-1}\,d\tau,
\]
so
\[
F'(t)=-\frac1t
\int_{\Sigma_t}(\varphi-\varphi_u)w\,d\Haus^{N-1}
\]
for almost every $t$. Therefore
\[
\int_{\Sigma_t}(\varphi-\varphi_u)w\,d\Haus^{N-1}-pF(t)
=
-t^{1-p}\frac{d}{dt}\bigl(t^pF(t)\bigr),
\]
which proves \eqref{eq:selection-differential}.

Suppose that \eqref{eq:selection-conclusion} fails on a full-measure set. Then
$G(t):=t^pF(t)$ is strictly decreasing. For any fixed $t_0>0$, the integrand in \eqref{eq:F-def} is absolutely integrable on $U_{t_0}$, while
$U_t\downarrow\{u=1\}$ as $t\uparrow1$ and $\nabla u=0$ almost everywhere on $\{u=1\}$. Dominated convergence gives $G(t)\to0$ as $t\uparrow1$, so $G(t)>0$ for $0<t<1$. On the other hand,
\begin{align*}
0<G(t)
&=t^pF(t)
\le
t^p\int_{U_t}\varphi\frac{|\nabla u|}{u}w\,dx
\le
t^{p-1}
\|\varphi\|_{L^{p'}(\Omega,d\mu)}
\|\nabla u\|_{L^p(\Omega)}
\longrightarrow0
\end{align*}
as $t\downarrow0$. This contradicts strict decrease. Hence \eqref{eq:selection-conclusion} holds at a level in every prescribed full-measure set.
\end{proof}

\subsection{Superlevel sets and the boundary trace}

We first record the exact weighted perimeter identity for the zero extension of a Sobolev function.
\begin{lemma}
\label{lem:weighted-superlevel-perimeter}
For almost every $t\in(0,1)$, the zero extension $\widehat\chi_{U_t}$ of $\chi_{U_t}$ belongs to $BV(\R^N)$ and
\begin{equation}
P_w(U_t)
=
\int_{\Sigma_t}w\,d\Haus^{N-1}
+
\int_{\Gamma_t}w\,d\Haus^{N-1}.
\label{eq:superlevel-perimeter}
\end{equation}
All terms are understood in $[0,\infty]$; in fact, they are finite for almost every $t$.
\end{lemma}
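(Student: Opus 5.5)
The plan is to approximate $\chi_{U_t}$ by Lipschitz truncations of $u$, compute the distributional gradient of their zero extensions to $\R^N$ by the Gauss--Green formula on the Lipschitz domain $\Omega$, and pass to the limit. Concretely, fix $t\in(0,1)$ and, for $0<\varepsilon<1-t$, set
\[
g_\varepsilon(s):=\min\{1,\max\{0,(s-t)/\varepsilon\}\},\qquad v_\varepsilon:=g_\varepsilon(u)\in W^{1,p}(\Omega)\cap L^\infty(\Omega),
\]
with $\operatorname{Tr}v_\varepsilon=g_\varepsilon(\operatorname{Tr}u)$ by the composition rule for traces. Let $\widehat v_\varepsilon$ be the zero extension. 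The standard extension-by-zero formula for $BV$ functions on Lipschitz domains gives
\[
D\widehat v_\varepsilon=\nabla v_\varepsilon\,\mathcal L^N\lfloor\Omega-\operatorname{Tr}v_\varepsilon\,\nu_\Omega\,\Haus^{N-1}\lfloor\partial\Omega .
\]
Because the two parts are mutually singular, this yields
\[
|D\widehat v_\varepsilon|=|\nabla v_\varepsilon|\,\mathcal L^N\lfloor\Omega+|\operatorname{Tr}v_\varepsilon|\,\Haus^{N-1}\lfloor\partial\Omega .
\]

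Next, let $\varepsilon\downarrow0$. Then $v_\varepsilon\to\chi_{U_t}$ in $L^1(\Omega)$ and $\operatorname{Tr}v_\varepsilon\to\chi_{\Gamma_t}$ pointwise on $\partial\Omega$, boundedly. By the coarea formula,
\[
\int_\Omega|\nabla v_\varepsilon|\,dx=\frac1\varepsilon\int_t^{t+\varepsilon}P(U_s;\Omega)\,ds,
\]
which is bounded for a.e.\ $t$ (Lebesgue points of the integrable function $s\mapsto P(U_s;\Omega)$). Lower semicontinuity therefore gives $\widehat\chi_{U_t}\in BV(\R^N)$. To identify $D\widehat\chi_{U_t}$ exactly, I would rather argue directly. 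Since $\chi_{U_t}\in BV(\Omega)$ for a.e.\ $t$, the same zero-extension formula applies to $\chi_{U_t}$ itself:
\[
D\widehat\chi_{U_t}=D\chi_{U_t}\lfloor\Omega-\operatorname{Tr}^{BV}\chi_{U_t}\,\nu_\Omega\,\Haus^{N-1}\lfloor\partial\Omega .
\]
On $\Omega$, $|D\chi_{U_t}|=\Haus^{N-1}\lfloor\Sigma_t$ by De Giorgi's structure theorem. The remaining task is to show that the $BV$ trace of $\chi_{U_t}$ equals $\chi_{\Gamma_t}$ $\Haus^{N-1}$-a.e.\ for a.e.\ $t$.

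This trace identification is the main obstacle. My first attempt uses the $L^1$ continuity of the $BV$ trace along strict convergence. By the coarea formula the total variations of $v_\varepsilon$ in $\Omega$ converge to $P(U_t;\Omega)$ for a.e.\ $t$, so $v_\varepsilon\to\chi_{U_t}$ strictly in $BV(\Omega)$. Strict continuity of the trace map then gives $\operatorname{Tr}v_\varepsilon\to\operatorname{Tr}^{BV}\chi_{U_t}$ in $L^1(\partial\Omega)$. Since $\operatorname{Tr}v_\varepsilon\to\chi_{\{\operatorname{Tr}u>t\}}$ pointwise, the two limits agree a.e. The precise requirement is that $\operatorname{Tr}v_\varepsilon$ converge pointwise to $\chi_{\Gamma_t}$ except on $\{\operatorname{Tr}u=t\}$, and this set is $\Haus^{N-1}$-null for all but countably many $t$. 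As a backup, one can use the layer-cake argument: for every $\zeta\in L^\infty(\partial\Omega)$ and a.e.\ $t$, Fubini applied to $\int_0^1\operatorname{Tr}\chi_{U_s}\,ds=\operatorname{Tr}u$ would give the identity.

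Finally, $w$ is a nonnegative Borel function, finite away from the origin, and $\{0\}$ is $\Haus^{N-1}$-null since $N\ge2$. Integrating $w$ against $|D\widehat\chi_{U_t}|=\Haus^{N-1}\lfloor\partial^*U_t$ then gives \eqref{eq:superlevel-perimeter}, where $U_t$ is regarded as a subset of $\R^N$. Finiteness for a.e.\ $t$ has two sources. The boundary term is at most $\int_{\partial\Omega}w$, which is finite by \eqref{eq:boundary-weight-integrable}. The interior term is integrable in $t$ by the weighted coarea formula applied to $\int_{U_{t_0}}w|\nabla u|$. This last integral is finite by the Hölder bound used in \Cref{lem:selection}, taken with $\varphi=1\in L^{p'}(d\mu)$, after multiplying by the bounded factor $u$.
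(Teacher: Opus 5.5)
Your proof is correct, but it identifies the boundary trace of $\chi_{U_t}$ by a different mechanism from the paper's.

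The paper works with the zero extension $\widehat u$ of $u$ itself. The half-ball density characterization of the trace gives the one-sided inclusion $\partial^*\{\widehat u>t\}\cap\partial\Omega\subset\{\operatorname{Tr}u\ge t\}$. The paper then compares the coarea formula for $|D\widehat u|\lfloor\partial\Omega=\operatorname{Tr}u\,\Haus^{N-1}\lfloor\partial\Omega$ with the layer-cake formula for $\operatorname{Tr}u$. This forces $\partial^*\{\widehat u>t\}\cap\partial\Omega=\Gamma_t$ up to null sets for a.e.\ $t$.

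You instead show, via coarea and Lebesgue differentiation of $s\mapsto P(U_s;\Omega)$, that $g_\varepsilon(u)\to\chi_{U_t}$ strictly in $BV(\Omega)$ at a.e.\ $t$. You then invoke the continuity of the $BV$ trace under strict convergence on bounded Lipschitz domains (part of the boundary trace theorem in \cite{AFP}), together with $\operatorname{Tr}g_\varepsilon(u)=g_\varepsilon(\operatorname{Tr}u)\to\chi_{\Gamma_t}$.

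The two routes buy different things. Yours delegates the delicate step to a citable theorem, makes the exceptional set of levels explicit (the non-Lebesgue points of the perimeter function), and reuses the truncation device of \Cref{lem:level-set-identity}. The paper's route avoids strict-convergence continuity of traces and yields the geometric identification of the reduced boundary on $\partial\Omega$ directly.

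Minor points:
\begin{enumerate}
\item Since $g_\varepsilon(t)=0$, the convergence $g_\varepsilon(\operatorname{Tr}u)\to\chi_{\{\operatorname{Tr}u>t\}}$ holds at every point, so your caveat about $\{\operatorname{Tr}u=t\}$ is not needed.
\item Your ``backup'' argument is incomplete as stated. The identity $\int_0^1\operatorname{Tr}\chi_{U_s}\,ds=\operatorname{Tr}u$ alone does not determine $\operatorname{Tr}\chi_{U_s}$ for a.e.\ $s$. You would also need a one-sided bound, such as the paper's inclusion, or monotonicity in $s$ combined with $\{0,1\}$-valuedness.
\item Finiteness of the interior term follows more simply from $\int_\Omega w|\nabla u|\,dx\le\mu(\Omega)^{1/p'}\|\nabla u\|_{L^p(\Omega)}$.
\end{enumerate}
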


\begin{proof}
Since $u\in W^{1,p}(\Omega)\subset W^{1,1}(\Omega)$ and $u\ge0$, its zero extension $\widehat u$ belongs to $BV(\R^N)$. The gluing formula on a Lipschitz domain gives the measure identity
\begin{equation}
|D\widehat u|
=|\nabla u|\,\mathcal L^N\lfloor\Omega
+
\operatorname{Tr}u\,\Haus^{N-1}\lfloor\partial\Omega.
\label{eq:zero-extension-u}
\end{equation}
For almost every $t>0$, the set
$E_t:=\{\widehat u>t\}$ has finite perimeter in $\R^N$, agrees with $U_t$ in $\Omega$, and is empty outside $\overline\Omega$. At a boundary point at which the interior trace of $u$ exists, the half-ball density characterization of the trace shows
\begin{equation}
\partial^*E_t\cap\partial\Omega
\subset
\{\operatorname{Tr}u\ge t\}
\quad\text{up to an }\Haus^{N-1}\text{-null set}.
\label{eq:trace-inclusion}
\end{equation}
The possible equality set causes no difficulty: by Fubini, $\Haus^{N-1}(\{\operatorname{Tr}u=t\})=0$ for almost every $t$.

Indeed, apply the $BV$ coarea formula to $\widehat u$ and restrict the resulting measures to $\partial\Omega$. From \eqref{eq:zero-extension-u}, for every Borel set $A\subset\partial\Omega$,
\begin{align*}
\int_A\operatorname{Tr}u\,d\Haus^{N-1}
&=|D\widehat u|(A)
=\int_0^\infty
\Haus^{N-1}(A\cap\partial^*E_t)\,dt.
\end{align*}
On the other hand, the layer-cake formula gives
\[
\int_A\operatorname{Tr}u\,d\Haus^{N-1}
=\int_0^\infty
\Haus^{N-1}(A\cap\{\operatorname{Tr}u>t\})\,dt.
\]
Together with \eqref{eq:trace-inclusion}, equality of these nonnegative integrated measures implies
\begin{equation*}
\partial^*E_t\cap\partial\Omega
=\{\operatorname{Tr}u>t\}
\quad\text{up to }\Haus^{N-1}\text{-null sets}
\end{equation*}
for almost every $t>0$. Equivalently,
$\operatorname{Tr}\chi_{U_t}=\chi_{\{\operatorname{Tr}u>t\}}$ for almost every $t$. The zero-extension formula applied now to $\chi_{U_t}$ gives
\[
D\widehat\chi_{U_t}
=D\chi_{U_t}\lfloor\Omega
-
\chi_{\Gamma_t}\nu_\Omega\,
\Haus^{N-1}\lfloor\partial\Omega.
\]
The two measures on the right are mutually singular. Their total variations therefore add, and integration of the nonnegative Borel weight $w$ proves \eqref{eq:superlevel-perimeter}.

Finally,
\[
\int_\Omega w|\nabla u|\,dx
\le
\mu(\Omega)^{1/p'}\|\nabla u\|_{L^p(\Omega)}<\infty,
\]
so the weighted coarea formula gives
$\int_{\Sigma_t}w\,d\Haus^{N-1}<\infty$ for almost every $t$. The boundary term is finite by \eqref{eq:boundary-weight-integrable}.
\end{proof}

\subsection{Regular radial flux on the comparison ball}

Let $B_R=\Omega^\sharp$, and assume
\begin{equation}
\int_{\partial B_R}w\,d\Haus^{N-1}<\infty.
\label{eq:ball-boundary-weight}
\end{equation}
Assume that $(m,w)$ is spectrally admissible on $B_R$, and let $z$ be the positive first eigenfunction on $B_R$, normalized by
$\|z\|_{L^\infty(B_R)}=1$.

The comparison-ball eigenfunction is required to have the following radial structure.
\begin{assumption}
\label{ass:flux}
The eigenfunction has a radial representative $z=z(r)>0$ which is locally absolutely continuous on $(0,R]$ and strictly decreasing. Define
\begin{equation}
\mathfrak f_z(r)
:=-r^{N-1}|z'(r)|^{p-2}z'(r)
=r^{N-1}|z'(r)|^{p-1}.
\label{eq:radial-flux-def}
\end{equation}
We assume
\begin{equation}
\mathfrak f_z\in W^{1,1}_{\mathrm{loc}}((0,R]),
\qquad
\mathfrak f_z'(r)
=
\lambda_{1,\beta}^{m,w}(B_R)
r^{N-1}\mathfrak m(r)z(r)^{p-1}
\label{eq:radial-flux-equation}
\end{equation}
for almost every $r\in(0,R)$, and
\begin{equation}
\lim_{r\downarrow0}\mathfrak f_z(r)z(r)^{1-p}=0,
\qquad
\mathfrak f_z(r)z(r)^{1-p}
=o\bigl(r^{N-1}\mathfrak w(r)\bigr)
\qquad(r\downarrow0).
\label{eq:radial-center-condition}
\end{equation}
Define
\begin{equation*}
\theta_R(r):=
\frac{|z'(r)|^{p-1}}
{\mathfrak w(r)z(r)^{p-1}},
\qquad 0<r\le R.
\end{equation*}
We assume that $\theta_R$ is continuous on $(0,R]$ and nondecreasing.
\end{assumption}

The normalized radial flux has the prescribed endpoint values.
\begin{lemma}
\label{lem:abstract-flux-endpoints}
Under \Cref{ass:flux}, $\theta_R$ extends continuously to $[0,R]$ and
\[
\theta_R(0)=0,
\qquad
\theta_R(R)=\beta.
\]
In particular, $0\le\theta_R\le\beta$ on $[0,R]$.
\end{lemma}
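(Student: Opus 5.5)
Since $\theta_R$ is continuous on $(0,R]$ and nondecreasing, the limit $\theta_R(0^+):=\lim_{r\downarrow0}\theta_R(r)$ exists in $[0,\infty)$. The continuous extension to $[0,R]$ is therefore automatic, and only the two endpoint values need to be identified. Once $\theta_R(0)=0$ and $\theta_R(R)=\beta$ are known, monotonicity immediately gives $0\le\theta_R\le\beta$.

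\emph{The center value.} By \eqref{eq:radial-flux-def},
\[
\theta_R(r)=\frac{\mathfrak f_z(r)\,z(r)^{1-p}}{r^{N-1}\mathfrak w(r)}.
\]
The second condition in \eqref{eq:radial-center-condition} says exactly that this quotient tends to $0$ as $r\downarrow0$. Hence $\theta_R(0)=0$, and no further analysis is needed here. The first condition in \eqref{eq:radial-center-condition} is not used for this endpoint; I would keep it in reserve for the boundary identification below.

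\emph{The boundary value.} This is the main step: I must extract the Robin condition $|z'(R)|^{p-1}=\beta\mathfrak w(R)z(R)^{p-1}$ from the weak equation \eqref{eq:weak-eigenvalue} on $B_R$. I would test \eqref{eq:weak-eigenvalue} with a radial function $\zeta(x)=\eta(|x|)$, where $\eta$ is Lipschitz, equals $1$ near $r=R$, and vanishes on $[0,\delta]$. In polar coordinates the gradient term is
\[
-\sigma_N\int_\delta^R \mathfrak f_z(r)\,\eta'(r)\,dr,
\]
and, because $\mathfrak f_z\in W^{1,1}_{\mathrm{loc}}((0,R])$, I would integrate it by parts on $[\delta,R]$. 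The boundary term contributes $\sigma_N\mathfrak f_z(R)$, while the interior term cancels against the right-hand side $\lambda\sigma_N\int_\delta^R r^{N-1}\mathfrak m\,z^{p-1}\eta\,dr$ by \eqref{eq:radial-flux-equation}. The Robin term equals $\beta R^{N-1}\sigma_N\mathfrak w(R)z(R)^{p-1}$. Here $\operatorname{Tr}z=z(R)$, since $z$ is absolutely continuous up to $R$, and \eqref{eq:ball-boundary-weight} guarantees that $\mathfrak w(R)$ is finite. Comparing these gives
\[
\mathfrak f_z(R)=\beta R^{N-1}\mathfrak w(R)z(R)^{p-1},
\]
which is $\theta_R(R)=\beta$. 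Here I interpret $z'(R)$ through $\mathfrak f_z(R)$, which is well defined because $\mathfrak f_z$ is continuous at $R$. Note that $z(R)>0$, so the division is legitimate.

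The remaining care is in this boundary identification, specifically in justifying that $\zeta\in W^{1,p}(B_R)$ and that the polar-coordinate reduction is valid. Since $\eta$ vanishes near the origin, the singularity of $\mathfrak m$ at $0$ never enters, and everything reduces to one-dimensional integrals over $[\delta,R]$, where all densities are locally integrable.

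If one wanted to avoid cutting off near $0$, a test with $\eta\equiv1$ would give instead the global identity $\sigma_N\mathfrak f_z(R)-\lim_{r\downarrow0}\sigma_N\mathfrak f_z(r)=\lambda\int mz^{p-1}$. The limit at $0$ would then be controlled by the first condition in \eqref{eq:radial-center-condition} together with boundedness of $z$. The cutoff version avoids this, so I would use the cutoff.
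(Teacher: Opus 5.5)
Your proposal is correct and follows the paper's proof. The center value comes directly from the second condition in \eqref{eq:radial-center-condition}, and the outer value comes from testing the weak equation with radial functions and integrating the flux equation by parts; your cutoff vanishing near the origin is a harmless refinement that avoids any use of $\lim_{r\downarrow0}\mathfrak f_z(r)$. One sign is off: with your expression $-\sigma_N\int_\delta^R\mathfrak f_z\eta'\,dr$, the boundary contribution is $-\sigma_N\mathfrak f_z(R)$, which the Robin term then balances, so your final identity $\mathfrak f_z(R)=\beta R^{N-1}\mathfrak w(R)z(R)^{p-1}$ is right as stated.
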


\begin{proof}
By the definitions of $\mathfrak f_z$ and $\mathfrak w$,
\[
\theta_R(r)
=\frac{\mathfrak f_z(r)z(r)^{1-p}}
{r^{N-1}\mathfrak w(r)}.
\]
Thus \eqref{eq:radial-center-condition} gives $\theta_R(r)\to0$ as $r\downarrow0$. To identify the outer endpoint, take radial test functions in the weak eigenvalue equation and integrate the one-dimensional flux equation by parts. The interior terms cancel by \eqref{eq:radial-flux-equation}, leaving
\[
\mathfrak f_z(R)
=\beta R^{N-1}\mathfrak w(R)z(R)^{p-1}.
\]
Hence $\theta_R(R)=\beta$. The bounds follow from monotonicity.
\end{proof}

For $0<r\le R$, define
\begin{equation*}
\mathcal H_{B_R}^w(B_r,\theta_R)
:=
\frac1{\mu(B_r)}
\left[
\theta_R(r)P_w(B_r)
-
(p-1)\int_{B_r}\theta_R(|x|)^{p'}\,d\mu
\right].
\end{equation*}

The radial equation yields an exact identity for every concentric subball.
\begin{lemma}
Under \Cref{ass:flux}, for every $0<r\le R$,
\begin{equation*}
\mathcal H_{B_R}^w(B_r,\theta_R)
=
\lambda_{1,\beta}^{m,w}(B_R).
\end{equation*}
\end{lemma}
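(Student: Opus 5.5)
The identity will follow from writing both terms of $\mathcal H_{B_R}^w(B_r,\theta_R)$ through the radial flux $\mathfrak f_z$ and integrating the flux equation \eqref{eq:radial-flux-equation} against the weight $z^{1-p}$. Put $\lambda:=\lambda_{1,\beta}^{m,w}(B_R)$ and $g(s):=\mathfrak f_z(s)z(s)^{1-p}$. Since $P_w(B_r)=\sigma_N r^{N-1}\mathfrak w(r)$, the boundary term is
\[
\theta_R(r)P_w(B_r)=\sigma_N g(r).
\]
For the volume term, use $\mathfrak w^{p'}=\mathfrak m$ and $|z'|^{p}=|z'|^{(p-1)p'}$ to get
\[
\theta_R^{p'}\mathfrak m=\frac{|z'|^p}{z^p}.
\]
In polar coordinates this gives
\[
(p-1)\int_{B_r}\theta_R(|x|)^{p'}\,d\mu=\sigma_N(p-1)\int_0^r s^{N-1}\frac{|z'(s)|^p}{z(s)^p}\,ds.
\]

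Next, compute the derivative of $g$ on $(0,R]$. Because $\mathfrak f_z\in W^{1,1}_{\mathrm{loc}}((0,R])$ and $z$ is locally absolutely continuous and positive, $g$ is locally absolutely continuous there. Using $z'=-|z'|$,
\[
g'=\mathfrak f_z'z^{1-p}-(p-1)\mathfrak f_z z^{-p}z'=\lambda s^{N-1}\mathfrak m+(p-1)s^{N-1}\frac{|z'|^p}{z^p}.
\]
Integrate this over $[\varepsilon,r]$ and let $\varepsilon\downarrow0$. The first condition in \eqref{eq:radial-center-condition} gives $g(\varepsilon)\to0$, and both integrands are nonnegative, so monotone convergence applies. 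The result is
\[
g(r)=\lambda\int_0^r s^{N-1}\mathfrak m\,ds+(p-1)\int_0^r s^{N-1}\frac{|z'|^p}{z^p}\,ds.
\]
In particular the last integral is finite.

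Multiply by $\sigma_N$ and subtract the volume term. This leaves
\[
\theta_R(r)P_w(B_r)-(p-1)\int_{B_r}\theta_R^{p'}\,d\mu=\lambda\,\sigma_N\int_0^r s^{N-1}\mathfrak m\,ds=\lambda\,\mu(B_r).
\]
Dividing by $\mu(B_r)$, which is positive by \Cref{ass:radial-volume-density}, gives the claim.

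The only delicate step is the passage $\varepsilon\downarrow0$. It needs the center condition to remove the boundary contribution at the origin and positivity of the integrands to justify the limit. Both are supplied directly by \Cref{ass:flux}, and the outer endpoint value from \Cref{lem:abstract-flux-endpoints} is not used.
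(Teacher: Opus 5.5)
Your proof is correct and follows the paper's argument. Both write the boundary and volume terms through $g=\mathfrak f_z z^{1-p}$, differentiate $g$ using \eqref{eq:radial-flux-equation}, integrate up to the center using \eqref{eq:radial-center-condition}, and then subtract. The only difference is minor: the paper first bounds $\int_0^r s^{N-1}|z'|^p z^{-p}\,ds$ a priori via $z(s)\ge z(r)$ and $z\in W^{1,p}(B_R)$ and then integrates from $0$, whereas you integrate on $[\varepsilon,r]$ and obtain that finiteness from monotone convergence.
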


\begin{proof}
Write $\lambda_R:=\lambda_{1,\beta}^{m,w}(B_R)$. By \Cref{lem:abstract-flux-endpoints}, the endpoint values and bounds for $\theta_R$ are available. For every fixed $r>0$, strict decrease gives $z(s)\ge z(r)>0$ on $(0,r)$, and hence
\[
\int_0^r s^{N-1}\frac{|z'(s)|^p}{z(s)^p}\,ds
\le
z(r)^{-p}\int_0^r s^{N-1}|z'(s)|^p\,ds<\infty.
\]
Thus the product below is absolutely continuous up to the center. From \eqref{eq:radial-flux-equation},
\[
\frac{d}{ds}\left(\mathfrak f_z(s)z(s)^{1-p}\right)
=
\lambda_Rs^{N-1}\mathfrak m(s)
+
(p-1)s^{N-1}\frac{|z'(s)|^p}{z(s)^p}.
\]
Integrating from $0$ to $r$ and using the center limit, which also follows from \eqref{eq:radial-center-condition}, gives
\[
\lambda_R\int_0^r s^{N-1}\mathfrak m(s)\,ds
=
\mathfrak f_z(r)z(r)^{1-p}
-
(p-1)\int_0^r s^{N-1}\frac{|z'(s)|^p}{z(s)^p}\,ds.
\]
Multiplication by $\sigma_N$ and the identities
\[
\sigma_N\mathfrak f_z(r)z(r)^{1-p}
=
\theta_R(r)P_w(B_r),
\qquad
\theta_R(|x|)^{p'}\,d\mu
=
\frac{|\nabla z|^p}{z^p}\,dx
\]
prove the result.
\end{proof}

\subsection{A plateau-safe weighted rank rearrangement}

Define
\[
M(t):=\mu(\{\widetilde u>t\}),
\qquad
M_-(t):=\mu(\{\widetilde u\ge t\}),
\qquad 0<t\le1,
\]
and
\begin{equation*}
\mathcal A:=
\{t\in(0,1]:M_-(t)>M(t)\}
=
\{t\in(0,1]:\mu(\{\widetilde u=t\})>0\}.
\end{equation*}
The set $\mathcal A$ is at most countable and includes a possible maximum plateau $\{u=1\}$. Because $\mu=m\,dx$ is nonatomic, the standard isomorphism theorem for nonatomic standard probability spaces \cite[Vol.~II, Theorem~9.2.2]{Bogachev} gives, for every $t\in\mathcal A$, a Borel map
\[
q_t:\{\widetilde u=t\}\longrightarrow(M(t),M_-(t))
\]
which pushes $\mu\lfloor\{\widetilde u=t\}$ forward to Lebesgue measure on that interval.
Here and below $\widetilde u$ is chosen Borel measurable. Each plateau is therefore a Borel subset of the standard Borel space $\Omega$. After normalizing the two finite measures, the cited theorem gives a Borel isomorphism between conull Borel subsets of the plateau and the target interval. Extend it by one fixed target value on the Borel null complement. This produces a Borel map on the whole plateau without changing its pushforward.

Define the weighted rank function $Q:\Omega\to[0,\mu(\Omega)]$ by
\begin{equation}
Q(x):=
\begin{cases}
M(\widetilde u(x)),&\widetilde u(x)\notin\mathcal A,\\[1mm]
q_t(x),&\widetilde u(x)=t\in\mathcal A.
\end{cases}
\label{eq:rank-function}
\end{equation}
On the exceptional set where the precise representative is undefined, define $Q$ to be zero. That set is $\mathcal L^N$-null and therefore $\mu$-null because $\mu\ll\mathcal L^N$. Since $\mathcal A$ is countable, $M$ is monotone and hence Borel, and every $q_t$ is Borel on its Borel plateau, the countable piecewise definition makes $Q$ Borel.

The nonatomic rank construction transfers the comparison radius distribution to the superlevel geometry.
Here $Q_\#\mu$ denotes the pushforward of $\mu$ under $Q$,
whereas $\mathcal L^1\lfloor(0,\mu(\Omega))$ denotes
one-dimensional Lebesgue measure restricted to the interval
$(0,\mu(\Omega))$. Thus \eqref{eq:rank-pushforward} below means that, for
every Borel set $A\subset(0,\mu(\Omega))$,
\begin{equation*}
	\mu\bigl(Q^{-1}(A)\bigr)=\mathcal L^1(A).
\end{equation*}
Equivalently, $Q$ is measure preserving from $(\Omega,\mu)$ onto this
interval; after normalizing the two measures by $\mu(\Omega)$, the
variable $Q$ is uniformly distributed on $(0,\mu(\Omega))$. The
restriction notation used below has the analogous meaning:
$(\mu\lfloor U_t)(E)=\mu(E\cap U_t)$ for every Borel set $E$.
\begin{lemma}
\label{lem:rank-rearrangement}
The function $Q$ satisfies
\begin{equation}
Q_\#\mu
=
\mathcal L^1\lfloor(0,\mu(\Omega)).
\label{eq:rank-pushforward}
\end{equation}
If $t\in(0,1)\setminus\mathcal A$, then
\begin{equation*}
U_t=\{Q<M(t)\}
\end{equation*}
up to a $\mu$-null set, and the restriction of $Q$ to $U_t$ pushes $\mu\lfloor U_t$ forward to Lebesgue measure on $(0,M(t))$.

Set $\varrho(0):=0$, let
\begin{equation*}
r(t):=\varrho(M(t)),
\qquad
\rho(x):=\varrho(Q(x)),
\qquad
\varphi(x):=\theta_R(\rho(x)).
\end{equation*}
By \Cref{lem:abstract-flux-endpoints}, $0\le\varphi\le\beta$, so $\varphi\in L^{p'}(\Omega,d\mu)$. For almost every $t\in(0,1)\setminus\mathcal A$,
\begin{equation}
\varphi=\theta_R(r(t))
\qquad
\Haus^{N-1}\text{-a.e. on }\Sigma_t,
\label{eq:phi-on-level}
\end{equation}
and
\begin{equation}
\int_{U_t}\varphi^{p'}\,d\mu
=
\int_{B_{r(t)}}\theta_R(|x|)^{p'}\,d\mu.
\label{eq:equimeasurable-integral}
\end{equation}
\end{lemma}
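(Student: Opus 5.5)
We prove the pushforward identity \eqref{eq:rank-pushforward} by testing it on the intervals $(0,s)$, $0<s<\mu(\Omega)$. These intervals form a $\pi$-system generating the Borel sets of $(0,\mu(\Omega))$, so equality of $\mu(\{Q<s\})$ and $s$ for every such $s$ forces equality of the two finite measures.

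To compute $\mu(\{Q<s\})$, set $t_s:=\inf\{t\in(0,1]:M(t)<s\}$. This uses the facts that $M$ is nonincreasing and right-continuous, that $M_-$ is its left limit, and that $M(t)\to\mu(\Omega)$ as $t\downarrow0$; the last holds because $u>0$ in $\Omega$ by spectral admissibility.

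\textbf{Case $s\in(M(t_s),M_-(t_s)]$.} A point with $\widetilde u(x)>t_s$ has rank $Q(x)\le M(\widetilde u(x))\le M(t_s)<s$. A point with $\widetilde u(x)<t_s$ has rank at least $M_-(\widetilde u(x))\ge M_-(t_s)\ge s$. When $t_s\in\mathcal A$, the plateau contributes $\mu(\{q_{t_s}<s\})=s-M(t_s)$ by the defining pushforward property of $q_{t_s}$. Summing gives $\mu(\{Q<s\})=M(t_s)+(s-M(t_s))=s$.

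\textbf{Case $s$ strictly between the values of $M$.} If $M(t_s)=M_-(t_s)=s$, the same sorting argument gives $\{Q<s\}=\{\widetilde u>t_s\}$ up to a $\mu$-null set, so again $\mu(\{Q<s\})=s$.

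Two points need attention. On each plateau $\{\widetilde u=t\}$ the values of $Q$ lie in the interval $(M(t),M_-(t))$, and these intervals are disjoint from the ranks of points on other levels. In particular, for $\widetilde u(x)=\tau\notin\mathcal A$ we have $Q(x)=M(\tau)$. A single value can be shared by a whole interval of levels on which $M$ is constant, but $\mu$ charges no such interval of levels, so this causes no harm. The undefined set and the null complements of the plateaus are $\mu$-null.

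The level-set statements then follow. For $t\notin\mathcal A$, the same sorting shows $U_t=\{Q<M(t)\}$ up to a $\mu$-null set. Restricting \eqref{eq:rank-pushforward} to this set gives the pushforward of $\mu\lfloor U_t$ onto Lebesgue measure on $(0,M(t))$. For \eqref{eq:equimeasurable-integral}, write
\[
\int_{U_t}\varphi^{p'}\,d\mu=\int_0^{M(t)}\theta_R(\varrho(s))^{p'}\,ds,
\]
and apply \Cref{lem:radial-volume-coordinate} on the ball $B_{r(t)}$, where $x\mapsto V(|x|)$ is uniformly distributed, to identify the right-hand side with $\int_{B_{r(t)}}\theta_R(|x|)^{p'}\,d\mu$. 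The bound $0\le\varphi\le\beta$ comes from \Cref{lem:abstract-flux-endpoints}. Since $\mu(\Omega)<\infty$, this gives $\varphi\in L^{p'}(\Omega,d\mu)$, and $\varphi$ is Borel measurable because $\theta_R$ is continuous and $\varrho$ is continuous.

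The main obstacle is \eqref{eq:phi-on-level}, since $\Sigma_t$ is $\mu$-null and the $\mu$-a.e. information is useless there. I would argue pointwise instead. Fix a level $t\notin\mathcal A$ at which $\Sigma_t$ is defined, and a point $x\in\Sigma_t$. The precise representative satisfies $\widetilde u(x)=t$ for $\Haus^{N-1}$-a.e. $x\in\partial^*U_t\cap\Omega$, for almost every $t$; this is a standard consequence of the coarea formula and the Sobolev fine properties of $\widetilde u$. Exclude also the null set where $\widetilde u$ is undefined; it is $\Haus^{N-1}$-negligible on $\Sigma_t$ for a.e. $t$, again by coarea. At such a point $t\notin\mathcal A$, so by definition $Q(x)=M(t)$. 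Hence $\rho(x)=\varrho(M(t))=r(t)$ and $\varphi(x)=\theta_R(r(t))$. Intersecting the countably many full-measure conditions on $t$, each obtained from coarea or Fubini, yields the almost-every-$t$ statement.
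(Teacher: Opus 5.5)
Your argument is essentially the paper's proof: a generalized inverse $t_s$ of $M$ with $M(t_s)\le s\le M_-(t_s)$ (your $\inf\{M<s\}$ in place of the paper's $\sup\{M>s\}$ is immaterial), sorting the levels above and below $t_s$ with the plateau segment contributing $s-M(t_s)$, transfer through the radial volume coordinate on $B_{r(t)}$, and Federer--Vol'pert plus coarea for $\Sigma_t$ and the undefined set. One local fix is needed: a plateau at level $\tau$ has ranks in $(M(\tau),M_-(\tau))$, so your first-case bounds are reversed. They should read $Q(x)\le M_-(\tau)\le M(t_s)$ for $\tau>t_s$ (strict on plateaus, with $M(\tau)<s$ off plateaus by your choice of $t_s$) and $Q(x)\ge M(\tau)\ge M_-(t_s)$ for $\tau<t_s$; with these bounds, the case $s=M(t_s)<M_-(t_s)$, which your two cases omit, goes through identically with zero plateau contribution.
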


\begin{proof}
The countability of $\mathcal A$ follows because its level sets are disjoint and have positive finite measure. Positivity of $u$ and the normalization $\|u\|_\infty=1$ give
\[
M(0+)=\mu(\Omega),\qquad M(1)=0.
\]
For $s\in(0,\mu(\Omega))$, define the generalized inverse
\[
t_s:=\sup\{t\in(0,1):M(t)>s\}.
\]
The one-sided limits of the decreasing distribution function are
$M(t_s)$ and $M_-(t_s)$, so
$M(t_s)\le s\le M_-(t_s)$. If $t_s\in\mathcal A$, then $\{Q<s\}$ consists, up to a null set, of $\{\widetilde u>t_s\}$ and the initial segment $\{x:\widetilde u(x)=t_s,\ q_{t_s}(x)<s\}$ of the plateau, whose measure is $s-M(t_s)$. If $t_s\notin\mathcal A$, the two endpoint values coincide and there is no plateau segment. Hence $\mu(\{Q<s\})=s$, proving \eqref{eq:rank-pushforward}.

If $t\notin\mathcal A$, no mass is assigned to the level $\{\widetilde u=t\}$. Applying the same distribution-function description at $s=M(t)$ gives $U_t=\{Q<M(t)\}$ modulo a null set. Restricting the preceding calculation to $0<s<M(t)$ proves the restricted pushforward assertion.

By \Cref{lem:radial-volume-coordinate}, the map $x\mapsto V(|x|)$ pushes $\mu\lfloor B_{r(t)}$ forward to Lebesgue measure on $(0,M(t))$. Therefore $\varrho(Q(x))$ on $U_t$ and $|x|$ on $B_{r(t)}$ have the same distribution. Composition with the nondecreasing function $\theta_R$ proves \eqref{eq:equimeasurable-integral}.

By the Federer--Vol'pert theorem for Sobolev functions \cite[Theorem~3.78]{AFP}, for almost every $t$ the precise representative satisfies
$\widetilde u=t$ at $\Haus^{N-1}$-almost every point of $\Sigma_t$. Moreover, if $N_u$ is the Lebesgue-null exceptional set on which the precise representative is not defined, the coarea formula gives
\[
\int_0^1
\Haus^{N-1}(N_u\cap\Sigma_t)\,dt
=
\int_{N_u}|\nabla u|\,dx
=0.
\]
Thus the arbitrary definition of $Q$ on $N_u$ does not affect almost any reduced level boundary. If $t\notin\mathcal A$, \eqref{eq:rank-function} gives $Q=M(t)$ on $\Sigma_t$ outside an $\Haus^{N-1}$-null set, and \eqref{eq:phi-on-level} follows.
\end{proof}

\subsection{Comparison and transfer}

At every admissible level, the constructed rank rearrangement produces the required comparison.
\begin{lemma}
\label{lem:H-comparison}
For almost every $t\in(0,1)$,
\begin{equation*}
\mathcal H_\Omega^w(U_t,\varphi)
\ge
\mathcal H_{B_R}^w(B_{r(t)},\theta_R)
=
\lambda_{1,\beta}^{m,w}(B_R).
\end{equation*}
\end{lemma}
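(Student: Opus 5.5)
The plan is to restrict to a full-measure set of levels $t\in(0,1)\setminus\mathcal A$ at which \Cref{lem:level-set-identity}, \Cref{lem:weighted-superlevel-perimeter}, and the conclusions \eqref{eq:phi-on-level} and \eqref{eq:equimeasurable-integral} of \Cref{lem:rank-rearrangement} all hold. We also require that $U_t$ has finite perimeter with $\mu(U_t)=M(t)>0$. This set is conull because $\mathcal A$ is countable and each of the cited statements holds for almost every $t$. At such a level we have $\mu(U_t)=\mu(B_{r(t)})$ by the choice $r(t)=\varrho(M(t))$, so the two functionals share a denominator and only the bracketed numerators need comparing. The final equality with $\lambda_{1,\beta}^{m,w}(B_R)$ is the exact subball identity already proved, applied with $r=r(t)\le R$. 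This requires $r(t)\le R$, which follows from $M(t)\le\mu(\Omega)=\mu(B_R)$ and monotonicity of $\varrho$. It also requires $r(t)>0$.

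For the numerators, the volume terms coincide by \eqref{eq:equimeasurable-integral}:
\[
(p-1)\int_{U_t}\varphi^{p'}\,d\mu=(p-1)\int_{B_{r(t)}}\theta_R(|x|)^{p'}\,d\mu .
\]
By \eqref{eq:phi-on-level}, $\varphi$ equals the constant $\theta_R(r(t))$ $\Haus^{N-1}$-a.e. on $\Sigma_t$. Hence the boundary part of the numerator on the $\Omega$ side is
\[
\theta_R(r(t))\int_{\Sigma_t}w\,d\Haus^{N-1}+\beta\int_{\Gamma_t}w\,d\Haus^{N-1}.
\]
By \Cref{lem:abstract-flux-endpoints}, $\theta_R(r(t))\le\beta$. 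Using $w\ge0$, this is at least
\[
\theta_R(r(t))\Bigl[\int_{\Sigma_t}w\,d\Haus^{N-1}+\int_{\Gamma_t}w\,d\Haus^{N-1}\Bigr]=\theta_R(r(t))\,P_w(U_t),
\]
where the equality is \eqref{eq:superlevel-perimeter}. This step is the real point of the argument: the boundary-trace part $\Gamma_t$ carries weight $\beta$ rather than $\varphi$, and the bound $\theta_R\le\beta$ is exactly what lets us absorb it into the full weighted perimeter.

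Next, the zero extension of $U_t$ is a bounded finite-perimeter set in $\R^N$ by \Cref{lem:weighted-superlevel-perimeter}, with $0<\mu(U_t)<\infty$. So \Cref{prop:superlevel-isop}, that is, \Cref{ass:double-density}, gives $P_w(U_t)\ge P_w(B_{r(t)})$. Multiplying by $\theta_R(r(t))\ge0$, subtracting the equal volume terms, and dividing by $\mu(U_t)$ yields
\[
\mathcal H_\Omega^w(U_t,\varphi)\ge\mathcal H_{B_R}^w(B_{r(t)},\theta_R).
\]
One bookkeeping point: all quantities are finite at these levels. The perimeter terms are finite by \Cref{lem:weighted-superlevel-perimeter}, and the volume terms are finite since $0\le\varphi\le\beta$. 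So no $\infty-\infty$ issue arises when we subtract.

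I expect the main obstacle to be the measure-theoretic alignment rather than the inequality itself. We must check that the perimeter entering \Cref{ass:double-density} is that of the zero-extended set, whose reduced boundary includes $\Gamma_t$, rather than the relative perimeter in $\Omega$. We must also check that the full-measure level sets from the separate lemmas can be intersected and that this intersection avoids $\mathcal A$. Both follow from the cited lemmas, so the argument reduces to stating these restrictions carefully.
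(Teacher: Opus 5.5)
Your proposal is correct and follows the paper's proof essentially step for step. You restrict to good levels outside $\mathcal A$, use $\varphi=\theta_R(r(t))$ on $\Sigma_t$ together with $0\le\theta_R(r(t))\le\beta$ to bound the boundary terms below by $\theta_R(r(t))P_w(U_t)$, apply the double-density inequality, match the volume terms and denominators, and conclude with the subball identity. Your extra bookkeeping ($0<r(t)\le R$ and finiteness of every term) is correct and only makes explicit what the paper leaves implicit.
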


\begin{proof}
Choose $t$ outside the null sets in the preceding lemmas and outside the countable set $\mathcal A$. By \eqref{eq:phi-on-level} and $0\le\theta_R(r(t))\le\beta$,
\begin{align*}
\int_{\Sigma_t}\varphi w\,d\Haus^{N-1}
+
\beta\int_{\Gamma_t}w\,d\Haus^{N-1}
&\ge
\theta_R(r(t))
\left[
\int_{\Sigma_t}w\,d\Haus^{N-1}
+
\int_{\Gamma_t}w\,d\Haus^{N-1}
\right]
\\
&=
\theta_R(r(t))P_w(U_t)
\\
&\ge
\theta_R(r(t))P_w(B_{r(t)}),
\end{align*}
where the equality follows from \Cref{lem:weighted-superlevel-perimeter} and the last inequality from \Cref{prop:superlevel-isop}. The negative terms agree by \eqref{eq:equimeasurable-integral}, and the denominators agree by definition of $r(t)$. The ball identity completes the proof.
\end{proof}

\begin{theorem}[Weighted Bossel--Daners transfer]
\label{thm:weighted-transfer}
Let $N\ge2$, $1<p<\infty$, and $\beta>0$. Let $m$ satisfy \Cref{ass:radial-volume-density}, choose a radial representative $m(x)=\mathfrak m(|x|)$ satisfying \eqref{eq:positive-radial-density}, and set $d\mu=m\,dx$ and $w=m^{1/p'}$. Let $\Omega\subset\R^N$ be a bounded connected Lipschitz domain with $0<\mu(\Omega)<\infty$, and let $\Omega^\sharp=B_R$. Assume:
\begin{enumerate}
\item $(m,w)$ is spectrally admissible on $\Omega$ and $B_R$;
\item \eqref{eq:boundary-weight-integrable} and \eqref{eq:ball-boundary-weight} hold;
\item \Cref{ass:double-density} holds;
\item \Cref{ass:flux} holds for the comparison-ball eigenfunction.
\end{enumerate}
Then
\begin{equation*}
\lambda_{1,\beta}^{m,w}(\Omega^\sharp)
\le
\lambda_{1,\beta}^{m,w}(\Omega).
\end{equation*}
\end{theorem}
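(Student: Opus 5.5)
The proof combines the lemmas of this section into the Bossel--Daners scheme; little new work is needed. Let $u$ be the positive first eigenfunction on $\Omega$ normalized by $\|u\|_{L^\infty(\Omega)}=1$, and let $z$ be the radial eigenfunction on $B_R=\Omega^\sharp$ from \Cref{ass:flux}. By \Cref{lem:abstract-flux-endpoints}, $\theta_R$ extends continuously to $[0,R]$ with $0\le\theta_R\le\beta$. Build the plateau-safe rank function $Q$ and the comparison function
\[
\varphi(x)=\theta_R\bigl(\varrho(Q(x))\bigr)
\]
as in \Cref{lem:rank-rearrangement}. Since $0\le\varphi\le\beta$ and $\mu(\Omega)<\infty$, we have $\varphi\in L^{p'}(\Omega,d\mu)$. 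Hence $\varphi$ is admissible in the selection \Cref{lem:selection}.

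Next, specify the full-measure set $\mathcal T\subset(0,1)$ on which everything holds simultaneously. It is the intersection of the following sets:
\begin{itemize}
\item the levels where $U_t$ has finite perimeter and \Cref{lem:level-set-identity} holds;
\item the levels where the superlevel perimeter identity of \Cref{lem:weighted-superlevel-perimeter} holds with finite terms;
\item the levels where \eqref{eq:phi-on-level} and \eqref{eq:equimeasurable-integral} hold;
\item the complement of the countable set $\mathcal A$.
\end{itemize}
Each condition excludes only a null set, and $\mathcal A$ is countable, so $\mathcal T$ has full measure. For $t\in\mathcal T$ we also have $\mu(U_t)>0$ and $r(t)\in(0,R]$, because $u>0$ and $t<1$ lie below the essential supremum, so that $M(t)>0$.

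Apply \Cref{lem:selection} with this $\mathcal T$. It yields some $t\in\mathcal T$ with
\[
\mathcal H^w_\Omega(U_t,\varphi)\le\lambda^{m,w}_{1,\beta}(\Omega).
\]
At the same level, \Cref{lem:H-comparison} gives
\[
\mathcal H^w_\Omega(U_t,\varphi)\ge\mathcal H^w_{B_R}(B_{r(t)},\theta_R)=\lambda^{m,w}_{1,\beta}(B_R).
\]
Chaining the two inequalities gives $\lambda^{m,w}_{1,\beta}(\Omega^\sharp)\le\lambda^{m,w}_{1,\beta}(\Omega)$.

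The one point needing care is that the level produced by selection must be one at which the comparison lemma applies. This is why the selection lemma is stated for an arbitrary prescribed full-measure set rather than giving existence of some level. I would therefore check explicitly that \Cref{lem:H-comparison} holds for every $t\in\mathcal T$, not merely almost every $t$ in an unspecified set. Its proof uses only the properties listed above, together with \Cref{prop:superlevel-isop} applied to the bounded finite-perimeter set $U_t$ (via its zero extension) and the ball identity for $0<r(t)\le R$. So enlarging the excluded null set suffices.

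A secondary check is that $\mathcal H^w_\Omega(U_t,\varphi)$ is finite at these levels, so the inequalities are meaningful. This follows from the finiteness statement in \Cref{lem:weighted-superlevel-perimeter}, the bound $\varphi\le\beta$, and \eqref{eq:boundary-weight-integrable}.
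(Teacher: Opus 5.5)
Your proposal is correct and follows the paper's proof essentially step for step. You build $\varphi=\theta_R\circ\varrho\circ Q$ from \Cref{lem:rank-rearrangement}, intersect the full-measure sets of admissible levels and remove $\mathcal A$, use \Cref{lem:selection} to pick $t\in\mathcal T$ with $\mathcal H_\Omega^w(U_t,\varphi)\le\lambda_{1,\beta}^{m,w}(\Omega)$, and chain this with \Cref{lem:H-comparison}. Your additional checks ($\mu(U_t)>0$, $0<r(t)\le R$, finiteness of $\mathcal H_\Omega^w(U_t,\varphi)$, and validity of the comparison at every level of $\mathcal T$) spell out what the paper leaves implicit in the phrase ``intersection of all full-measure sets of admissible levels.''
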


\begin{proof}
Construct $\varphi$ by \Cref{lem:rank-rearrangement}. Since $0\le\varphi\le\beta$ and $\mu(\Omega)<\infty$, one has $\varphi\in L^{p'}(\Omega,d\mu)$. Let $\mathcal T$ be the intersection of all full-measure sets of admissible levels in the preceding lemmas, with $\mathcal A$ removed. By \Cref{lem:selection}, there exists $t\in\mathcal T$ such that
\[
\mathcal H_\Omega^w(U_t,\varphi)
\le
\lambda_{1,\beta}^{m,w}(\Omega).
\]
By \Cref{lem:H-comparison},
\[
\mathcal H_\Omega^w(U_t,\varphi)
\ge
\lambda_{1,\beta}^{m,w}(B_R).
\]
Since $B_R=\Omega^\sharp$, the conclusion follows.
\end{proof}

\begin{remark}
The comparison function automatically satisfies
$\int_\Omega\varphi^{p'}\,d\mu\le\beta^{p'}\mu(\Omega)$. Thus no auxiliary product-integrability condition involving
$(\varphi-\varphi_u)|\nabla u|w/u$ is required.
\end{remark}

\begin{remark}
The abstract theorem separates applications into three independent tasks:
compatible double-density isoperimetry, spectral admissibility, and verification of the regular monotone ball flux. The last item includes the radial weak-solution regularity and center behavior in \eqref{eq:radial-flux-def}--\eqref{eq:radial-center-condition}.
\end{remark}

\subsection{Fixed-volume consequence}

Let $m$ satisfy \Cref{ass:radial-volume-density}, choose a radial representative $m(x)=\mathfrak m(|x|)$ with
\[
0<\mathfrak m(r)<\infty\qquad(r>0),
\]
and set $w=m^{1/p'}$. For $v>0$, let $B_v^\sharp$ be the unique centered ball with $\mu(B_v^\sharp)=v$.

For the fixed-volume formulation, define the admissible class as follows.
\begin{definition}
\label{def:fixed-volume-class}
Fix $1<p<\infty$, $\beta>0$, and $v>0$. The class $\mathcal D_{m,w,\beta}(v)$ consists of bounded connected Lipschitz domains $\Omega$ such that:
\begin{enumerate}
\item $\mu(\Omega)=v$, so $\Omega^\sharp=B_v^\sharp$;
\item $w\in L^1(\partial\Omega)$ and $w\in L^1(\partial B_v^\sharp)$;
\item $(m,w)$ is spectrally admissible on $\Omega$ and $B_v^\sharp$;
\item the positive radial first eigenfunction on $B_v^\sharp$ satisfies the radial ball structure in \Cref{ass:flux}; its endpoint values then follow from \Cref{lem:abstract-flux-endpoints}.
\end{enumerate}
\end{definition}

The transfer theorem gives the corresponding fixed-volume minimization statement.
\begin{corollary}

Suppose that \Cref{ass:double-density} holds. Then, for every $\Omega\in\mathcal D_{m,w,\beta}(v)$,
\[
\lambda_{1,\beta}^{m,w}(B_v^\sharp)
\le
\lambda_{1,\beta}^{m,w}(\Omega).
\]
\end{corollary}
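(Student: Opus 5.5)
This corollary is a direct specialization of \Cref{thm:weighted-transfer}, so the plan is to check that each hypothesis of that theorem is built into \Cref{def:fixed-volume-class}. Fix $\Omega\in\mathcal D_{m,w,\beta}(v)$. The standing assumptions on $m$ are those of the theorem: \Cref{ass:radial-volume-density}, a radial representative with $0<\mathfrak m<\infty$ on $(0,\infty)$, and $w=m^{1/p'}$. Item (1) of the definition gives $\mu(\Omega)=v\in(0,\infty)$. Since $V$ is a continuous strictly increasing bijection of $[0,\infty)$, the ball $B_v^\sharp=B_{\varrho(v)}$ is well defined and equals $\Omega^\sharp$; put $R:=\varrho(v)$.

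The remaining hypotheses are then matched one by one.
\begin{enumerate}
\item Item (3) of the definition is hypothesis (1) of the theorem, namely spectral admissibility on $\Omega$ and on $B_R$.
\item Item (2), $w\in L^1(\partial\Omega)$ and $w\in L^1(\partial B_R)$, is exactly \eqref{eq:boundary-weight-integrable} and \eqref{eq:ball-boundary-weight}.
\item \Cref{ass:double-density} is assumed in the corollary.
\item Item (4) states that the positive radial first eigenfunction on $B_R$ satisfies \Cref{ass:flux}.
\end{enumerate}

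One point needs a word in the last item. \Cref{ass:flux} is formulated for the eigenfunction normalized by $\|z\|_{L^\infty(B_R)}=1$. Every condition in it, namely monotonicity, the equation \eqref{eq:radial-flux-equation}, the center conditions \eqref{eq:radial-center-condition}, and the definition of $\theta_R$, is invariant under multiplying $z$ by a positive constant. The equation is $(p-1)$-homogeneous on both sides, and $\theta_R$ is scale invariant. Simplicity in the definition of spectral admissibility makes the positive eigenfunction unique up to scaling. So it does not matter which normalization item (4) refers to.

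With all hypotheses verified, \Cref{thm:weighted-transfer} gives $\lambda_{1,\beta}^{m,w}(\Omega^\sharp)\le\lambda_{1,\beta}^{m,w}(\Omega)$, and $\Omega^\sharp=B_v^\sharp$ completes the argument. There is no substantive obstacle here; the only care needed is this bookkeeping around scaling invariance and the identification $\Omega^\sharp=B_v^\sharp$.
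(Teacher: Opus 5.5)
Your proposal is correct and is the same argument as the paper's: every domain- and ball-specific hypothesis of \Cref{thm:weighted-transfer} is built into \Cref{def:fixed-volume-class}, and positivity of the radial density together with \Cref{ass:double-density} supplies the remaining global hypotheses. Your extra remark that \Cref{ass:flux} is invariant under positive rescaling of $z$ is accurate bookkeeping that the paper leaves implicit.
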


\begin{proof}
Every domain- and ball-specific assumption of \Cref{thm:weighted-transfer} is part of \Cref{def:fixed-volume-class}; the positivity of the radial density and the double-density inequality are the remaining global hypotheses. Hence the comparison follows from \Cref{thm:weighted-transfer}.
\end{proof}

\begin{remark}
The criterion in \Cref{thm:spectral-admissibility} is only one sufficient way to verify the spectral part. Other exponent or weight regimes may be covered by different spectral arguments.
\end{remark}

\begin{remark}
Connectedness is retained in the abstract weighted class because it enters positivity and simplicity, and a weighted component reduction need not preserve the comparison structure. The unweighted specialization admits a component reduction and therefore removes this restriction.
\end{remark}

\section{Consistency check: the unweighted Lipschitz case}

Here $m=w=1$. Thus $d\mu=dx$ and $P_w=P$. For a bounded Lipschitz open set, write
\[
\lambda_{1,\beta}(\Omega)
=
\inf_{0\ne u\in W^{1,p}(\Omega)}
\frac{\displaystyle\int_\Omega|\nabla u|^p\,dx
+\beta\int_{\partial\Omega}|\operatorname{Tr}u|^p\,d\Haus^{N-1}}
{\displaystyle\int_\Omega|u|^p\,dx}.
\]
This section is only a consistency check within the bounded Lipschitz class. The Robin Faber--Krahn inequality is known in substantially stronger finite-perimeter and free-discontinuity settings \cite{BucurGiacomini2010,BucurGiacomini2015}; see also the survey \cite{BucurFreitasKennedy}. The present transfer uses Lipschitz coercivity, the Sobolev/$BV$ boundary-trace formula, and a finite Lipschitz graph atlas for boundary measure estimates, so no extension to those broader classes is claimed here.

In the unweighted case, the abstract spectral assumptions hold automatically.
\begin{proposition}
\label{prop:unweighted-spectral}
Let $1<p<\infty$, $\beta>0$, and let $\Omega$ be a bounded connected Lipschitz domain. Then $(1,1)$ is spectrally admissible on $\Omega$.
\end{proposition}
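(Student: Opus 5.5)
Since $m=w=1$ is the trivial weight, the natural plan is to check the four items of \Cref{def:spectral-admissibility} directly, reusing the machinery of \Cref{sec:spectral}. The one catch is that \Cref{thm:spectral-admissibility} was stated only for $1<p\le N$, whereas here $p$ is arbitrary. So I will not cite that theorem as a black box. I will re-run its sub-arguments (\Cref{thm:attainment}, \Cref{thm:first-eigenfunction-boundedness}, \Cref{thm:first-eigenfunction-positivity}, \Cref{thm:first-eigenvalue-simplicity}) and check where the restriction $p\le N$ actually entered.

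\textbf{Existence and the Euler equation.} The weights $m=1\in L^\infty(\Omega)$ and $w=1\in L^\infty(\partial\Omega)$ satisfy \eqref{eq:spectral-basic} and \eqref{eq:spectral-positive-mass}, because $|\Omega|>0$ and $0<\Haus^{N-1}(\partial\Omega)<\infty$ for a bounded Lipschitz domain. \Cref{lem:weighted-form-compactness} then only needs the compact embeddings $W^{1,p}(\Omega)\hookrightarrow L^p(\Omega)$ (Rellich) and $W^{1,p}(\Omega)\to L^p(\partial\Omega)$ (compact trace on Lipschitz domains). Both hold for every $1<p<\infty$. The proof of \Cref{lem:compactness-coercivity} uses only the Poincar\'e inequality and continuity of the trace, so it goes through verbatim. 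Hence \Cref{thm:attainment} yields a nonnegative minimizer satisfying \eqref{eq:weak-eigenvalue}.

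\textbf{Boundedness.} For $p<N$, the Moser iteration in \Cref{thm:first-eigenfunction-boundedness} works as written with $q=\infty$, $q'=1$, $s=p^*$ and $\chi=p^*/p>1$. For $p=N$, the same proof applies with any finite $s>p$. For $p>N$, no iteration is needed: Morrey's embedding $W^{1,p}(\Omega)\hookrightarrow C^{0,1-N/p}(\overline\Omega)$ on Lipschitz domains gives boundedness, and in fact H\"older continuity, directly. This case split is the only place where the restriction $p\le N$ has to be removed.

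\textbf{Positivity and simplicity.} Positivity follows as in \Cref{thm:first-eigenfunction-positivity}. The right-hand side $\lambda u^{p-1}$ lies in $L^\infty_{\mathrm{loc}}$, so Serrin's local H\"older continuity applies, or Morrey does when $p>N$. Trudinger's weak Harnack inequality for nonnegative supersolutions holds for all $1<p<\infty$, and the open--closed argument on the connected set $\Omega$ goes through unchanged. Simplicity is the Picone argument of \Cref{thm:first-eigenvalue-simplicity}, which never used $p\le N$: it needs only boundedness, positivity, interior continuity and connectedness.

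\textbf{Main obstacle.} The real content is confirming that every cited tool (Serrin, Trudinger, the Moser step, the compact trace embedding) is valid without the restriction $p\le N$. The only genuine change is replacing the Moser iteration by Morrey's embedding when $p>N$. Once that replacement is made, all four items of \Cref{def:spectral-admissibility} hold for $(1,1)$.
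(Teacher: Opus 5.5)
Your proposal is correct and follows the paper's proof. For $1<p\le N$ the claim is the special case $m=w=1$ of the finite-exponent admissibility criterion; you re-verify it step by step, though citing it directly would suffice since $m=w=1$ lie in every $L^q$ and $L^r$. For $p>N$ the same compactness, weak Harnack, and Picone arguments apply, and your use of Morrey's embedding for boundedness and continuity makes explicit what the paper covers by ``standard local regularity.''
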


\begin{proof}
For $1<p\le N$, the assertion is the special case $m=w=1$ of \Cref{thm:spectral-admissibility}. If $p>N$, compactness of the Sobolev and trace maps gives attainment and the weak equation, while the Robin form is coercive. Standard local regularity for the unweighted $p$-Laplace equation gives a continuous representative, and the weak Harnack argument used in \Cref{thm:first-eigenfunction-positivity} gives strict positivity. The Picone identity and its equality case \cite{AllegrettoHuang} give simplicity. These standard first-eigenfunction facts are also used in the unweighted Robin proof of Bucur and Daners \cite{BucurDaners}.
\end{proof}

\subsection{The radial ball flux}
Let $B_R$ be centered, let $\lambda_R=\lambda_{1,\beta}(B_R)$, and let $z$ be its positive first eigenfunction normalized by $\|z\|_{L^\infty(B_R)}=1$. For every orthogonal map $O$, $z\circ O$ is another normalized positive first eigenfunction. Simplicity therefore gives $z\circ O=z$, so $z(x)=Z(|x|)$ is radial.

The interior $C^{1,\alpha}$ regularity and rotational invariance give $Z'(0)=0$. Define
\[
\mathfrak f_R(r)=-r^{N-1}|Z'(r)|^{p-2}Z'(r).
\]
The radial weak equation therefore has no integration constant and gives
\[
\mathfrak f_R(r)=\lambda_R\int_0^r s^{N-1}Z(s)^{p-1}\,ds,
\qquad
\mathfrak f_R(R)=\beta R^{N-1}Z(R)^{p-1}.
\]
In particular, $\mathfrak f_R(r)>0$ for $r>0$, hence $Z'(r)<0$. Thus $Z$ is strictly decreasing, its maximum is attained at the center, and the normalization gives $Z(0)=1$. Moreover, $Z(R)>0$: otherwise the Robin identity would give $\mathfrak f_R(R)=0$, contradicting the positive integral formula. At the center,
\[
\mathfrak f_R(r)=\frac{\lambda_R}{N}r^N+o(r^N).
\]
Set
\[
\theta_R(r)=\frac{|Z'(r)|^{p-1}}{Z(r)^{p-1}}.
\]
Then $\theta_R\in C([0,R])\cap C^1((0,R])$,
\[
\theta_R(0)=0,\qquad \theta_R(R)=\beta,
\]
and
\begin{equation}
\theta_R'(r)
=
\lambda_R+(p-1)\theta_R(r)^{p'}-\frac{N-1}{r}\theta_R(r).
\label{eq:unweighted-flux-ode}
\end{equation}
Also $\theta_R(r)=\lambda_R r/N+o(r)$ near zero. Substitution in \eqref{eq:unweighted-flux-ode} gives
$\theta_R'(r)=\lambda_R/N+o(1)>0$ for sufficiently small $r$. If $r_0$ were the first zero of $\theta_R'$, differentiating \eqref{eq:unweighted-flux-ode} would give
\[
\theta_R''(r_0)=\frac{N-1}{r_0^2}\theta_R(r_0)>0,
\]
whereas a first zero approached from positive values requires $\theta_R''(r_0)\le0$. Hence:

The normalized unweighted radial flux is strictly increasing.
\begin{lemma}
\label{lem:unweighted-flux}
For every $R>0$,
\[
\theta_R'(r)>0\qquad(0<r<R),
\]
and therefore $0<\theta_R(r)<\beta$ on $(0,R)$.
\end{lemma}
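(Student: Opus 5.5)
The proof is a first-zero argument for $\theta_R'$ on $(0,R)$, based on the Riccati-type equation \eqref{eq:unweighted-flux-ode}; the pointwise bounds then follow from the endpoint values. The regularity $\theta_R\in C^1((0,R])$ is already in hand. Since $Z\in C^1$ is positive, and $|Z'|^{p-1}=r^{1-N}\mathfrak f_R$ with $\mathfrak f_R$ of class $C^1$ on $(0,R]$ by the integral formula, $\theta_R=r^{1-N}\mathfrak f_R Z^{1-p}$ is $C^1$ there. Its derivative is computed directly:
\[
\theta_R'=\frac{\mathfrak f_R'}{r^{N-1}Z^{p-1}}-\frac{N-1}{r}\theta_R+(p-1)\frac{\mathfrak f_R|Z'|}{r^{N-1}Z^{p}}.
\]
Using $\mathfrak f_R'=\lambda_R r^{N-1}Z^{p-1}$ and $|Z'|/Z=\theta_R^{1/(p-1)}$, the last term equals $(p-1)\theta_R^{p'}$. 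This gives \eqref{eq:unweighted-flux-ode}. The right-hand side is again $C^1$ on $(0,R]$, because $\theta_R>0$ there and $s\mapsto s^{p'}$ is $C^1$ on $(0,\infty)$. Hence $\theta_R\in C^2((0,R])$, so differentiating \eqref{eq:unweighted-flux-ode} is legitimate.

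Step one handles small $r$. The center expansion $\mathfrak f_R(r)=\lambda_R r^N/N+o(r^N)$ together with $Z(0)=1$ gives $\theta_R(r)=\lambda_R r/N+o(r)$. Inserting this into \eqref{eq:unweighted-flux-ode} yields
\[
\theta_R'(r)=\lambda_R-\frac{N-1}{N}\lambda_R+o(1)=\frac{\lambda_R}{N}+o(1),
\]
since $\theta_R^{p'}\to0$. So $\theta_R'>0$ on some interval $(0,\delta)$.

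Step two, the main step, excludes a first zero. Suppose $r_0\in(0,R)$ is the infimum of the zeros of $\theta_R'$ in $(0,R)$. By continuity $\theta_R'(r_0)=0$, and $\theta_R'>0$ on $(0,r_0)$, which forces $\theta_R''(r_0)\le0$. Now differentiate \eqref{eq:unweighted-flux-ode}:
\[
\theta_R''=p\,\theta_R^{p'-1}\theta_R'-\frac{N-1}{r}\theta_R'+\frac{N-1}{r^2}\theta_R.
\]
At $r_0$ the first two terms vanish, leaving $\theta_R''(r_0)=(N-1)\theta_R(r_0)/r_0^2$. This is strictly positive because $N\ge2$ and $\theta_R(r_0)>0$ (indeed $Z'<0$ on $(0,R]$). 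That contradicts $\theta_R''(r_0)\le0$, so $\theta_R'>0$ on $(0,R)$. The only delicate point I see is the $C^2$ regularity used here, which the bootstrap above supplies.

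Step three gives the bounds. Strict monotonicity of $\theta_R$ on $[0,R]$ combined with $\theta_R(0)=0$ and $\theta_R(R)=\beta$ yields $0<\theta_R(r)<\beta$ for $0<r<R$.
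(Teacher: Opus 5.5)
Your proof is correct and follows the same route as the paper. You derive the Riccati equation \eqref{eq:unweighted-flux-ode}, use $\theta_R(r)=\lambda_R r/N+o(r)$ to get $\theta_R'>0$ near the origin, and exclude a first zero $r_0$ via $\theta_R''(r_0)=(N-1)\theta_R(r_0)/r_0^2>0$; your explicit $C^2$ bootstrap from the right-hand side of the equation just makes precise a regularity step the paper uses implicitly.
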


\subsection{Connected domains}
By the Euclidean isoperimetric inequality, the radial identities above, and
\Cref{prop:unweighted-spectral} and \Cref{lem:unweighted-flux}, all hypotheses of
\Cref{thm:weighted-transfer} hold on every bounded connected Lipschitz domain.
Thus, if $|B_R|=|\Omega|$,
\[
\lambda_{1,\beta}(B_R)\le\lambda_{1,\beta}(\Omega).
\]

\subsection{Ball-radius monotonicity and component reduction}
The first Robin eigenvalue on a ball decreases strictly with its radius.
\begin{lemma}
\label{lem:unweighted-ball-radius}
If $0<r<R$, then
\[
\lambda_{1,\beta}(B_R)<\lambda_{1,\beta}(B_r).
\]
\end{lemma}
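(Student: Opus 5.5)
We prove the strict decrease by a dilation argument in the Rayleigh quotient, combined with attainment on $B_R$. Let $z$ be the positive radial first eigenfunction on $B_R$, whose existence and strict positivity up to the boundary were established above; in particular $Z(R)>0$. Set $q:=r/R\in(0,1)$ and define $v(x):=z(x/q)$ on $B_r$. Then $v\in W^{1,p}(B_r)$, and the change of variables $x=qy$ gives
\[
\int_{B_r}|\nabla v|^p\,dx=q^{N-p}\int_{B_R}|\nabla z|^p\,dy,
\]
\[
\int_{\partial B_r}|v|^p\,d\Haus^{N-1}=q^{N-1}\int_{\partial B_R}|z|^p\,d\Haus^{N-1},
\]
\[
\int_{B_r}|v|^p\,dx=q^N\int_{B_R}|z|^p\,dy.
\]

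Dividing, the Rayleigh quotient of $v$ on $B_r$ equals
\[
\frac{q^{-p}\int_{B_R}|\nabla z|^p+q^{-1}\beta\int_{\partial B_R}|z|^p}{\int_{B_R}|z|^p}.
\]
Since $0<q<1$ and $p>1$, we have $q^{-p}>q^{-1}>1$. Hence the quotient is at least
\[
q^{-1}\,\frac{\int_{B_R}|\nabla z|^p+\beta\int_{\partial B_R}|z|^p}{\int_{B_R}|z|^p}=q^{-1}\lambda_{1,\beta}(B_R).
\]
Note that $\lambda_{1,\beta}(B_R)>0$ by \Cref{thm:attainment}.

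This inequality points the wrong way for an upper bound on $\lambda_{1,\beta}(B_r)$. The argument must therefore run in the reverse direction: start from the eigenfunction on the small ball and dilate it outward. Let $y$ be the positive first eigenfunction on $B_r$, and set $u(x):=y(qx)$ on $B_R$. The same change of variables gives
\[
\mathcal R_{B_R}(u)=\frac{q^{p}\int_{B_r}|\nabla y|^p+q\beta\int_{\partial B_r}|y|^p}{\int_{B_r}|y|^p}.
\]
Using $q^p<q$ and the fact that the bracket is nonnegative,
\[
\mathcal R_{B_R}(u)\le q\,\lambda_{1,\beta}(B_r)<\lambda_{1,\beta}(B_r).
\]
The first inequality uses only $q^p\le q$. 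The second is strict because $\lambda_{1,\beta}(B_r)>0$ and $q<1$. Therefore
\[
\lambda_{1,\beta}(B_R)\le\mathcal R_{B_R}(u)<\lambda_{1,\beta}(B_r).
\]

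The only delicate point is strictness, and the factor $q<1$ multiplying a positive eigenvalue supplies it. The only inputs are that the infimum on $B_r$ is attained, which is \Cref{prop:unweighted-spectral}, and that $\lambda_{1,\beta}(B_r)>0$. No flux monotonicity is required.
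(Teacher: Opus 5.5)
Your final argument is correct and is essentially the paper's proof. The paper also dilates the first eigenfunction of $B_r$ onto $B_R$: its trial function $v(x)=z_r(x/q)$ with $q=R/r$ is your $u(x)=y(qx)$ with $q=r/R$. This gives the factors $q^{p}$ and $q$ (in your notation) on the gradient and boundary terms. Strictness then comes from both factors being below one and the numerator being positive, which is exactly your bound $\mathcal R_{B_R}(u)\le q\,\lambda_{1,\beta}(B_r)<\lambda_{1,\beta}(B_r)$. The opening computation, which dilates the eigenfunction of $B_R$ down to $B_r$, gives no upper bound and should be deleted from the write-up.
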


\begin{proof}
Let $q=R/r>1$ and let $z_r$ be a first eigenfunction on $B_r$. The trial function $v(x)=z_r(x/q)$ on $B_R$ gives
\[
\lambda_{1,\beta}(B_R)
\le
\frac{q^{-p}\int_{B_r}|\nabla z_r|^p
+\beta q^{-1}\int_{\partial B_r}|z_r|^p}
{\int_{B_r}|z_r|^p}
<
\lambda_{1,\beta}(B_r),
\]
because both scaling factors are strictly smaller than one and the original numerator is positive.
\end{proof}

By the standing convention from the introduction, the Sobolev space, boundary integral, and volume integral decompose componentwise on a bounded Lipschitz open set.

For a finite disjoint union, the first eigenvalue is the minimum over the components.
\begin{lemma}
If a bounded Lipschitz open set has connected components $\Omega_1,\dots,\Omega_k$, then
\[
\lambda_{1,\beta}(\Omega)=\min_{1\le j\le k}\lambda_{1,\beta}(\Omega_j).
\]
\end{lemma}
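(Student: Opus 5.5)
We prove the two inequalities $\lambda_{1,\beta}(\Omega)\le\min_j\lambda_{1,\beta}(\Omega_j)$ and $\lambda_{1,\beta}(\Omega)\ge\min_j\lambda_{1,\beta}(\Omega_j)$ separately, using only that all three forms split over the components. Since the closures $\overline{\Omega_j}$ are pairwise disjoint and each $\Omega_j$ is a bounded Lipschitz domain, restriction gives an isomorphism
\[
W^{1,p}(\Omega)\cong\prod_{j=1}^kW^{1,p}(\Omega_j),
\qquad u\mapsto(u|_{\Omega_j})_j.
\]
Moreover $\partial\Omega$ is the disjoint union of the $\partial\Omega_j$, and $\operatorname{Tr}u=\operatorname{Tr}(u|_{\Omega_j})$ on $\partial\Omega_j$. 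Indeed, near each point of $\partial\Omega_j$ the set $\Omega$ coincides with $\Omega_j$, since the other closures are at positive distance. Consequently, writing $u_j:=u|_{\Omega_j}$,
\[
\mathcal E(u)=\sum_j\mathcal E_j(u_j),\qquad \mathcal M(u)=\sum_j\mathcal M_j(u_j),
\]
where $\mathcal E_j$ and $\mathcal M_j$ denote the Robin energy and the $L^p$ mass on $\Omega_j$.

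For the upper bound, fix $j$ and take any $0\ne v\in W^{1,p}(\Omega_j)$. Extend $v$ by zero to the other components. The extension lies in $W^{1,p}(\Omega)$ because the components are separated, and its Rayleigh quotient on $\Omega$ equals that of $v$ on $\Omega_j$. Taking the infimum over $v$ gives $\lambda_{1,\beta}(\Omega)\le\lambda_{1,\beta}(\Omega_j)$ for every $j$.

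For the lower bound, take $u\ne0$ and set $\Lambda:=\min_j\lambda_{1,\beta}(\Omega_j)$. For each $j$ with $u_j\ne0$, the definition of $\lambda_{1,\beta}(\Omega_j)$ gives $\mathcal E_j(u_j)\ge\lambda_{1,\beta}(\Omega_j)\mathcal M_j(u_j)\ge\Lambda\mathcal M_j(u_j)$. For $j$ with $u_j=0$ this holds trivially, and every $\mathcal E_j(u_j)\ge0$. Summing over $j$ gives $\mathcal E(u)\ge\Lambda\mathcal M(u)$. Since this holds for every $u\ne0$, we get $\lambda_{1,\beta}(\Omega)\ge\Lambda$.

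The only point requiring care is the decomposition step: that the zero extension is in $W^{1,p}(\Omega)$ and that traces localize to components. Both follow from positive separation of the closures: a smooth cutoff equal to $1$ near $\overline{\Omega_j}$ and $0$ near the other closures shows that multiplication by $\chi_{\Omega_j}$ is a bounded operation on $W^{1,p}(\Omega)$ commuting with the trace. The minimum is attained since $k$ is finite.
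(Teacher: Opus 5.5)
Your proof is correct and follows the paper's own route: you split the energy and mass over the components to get the lower bound, and you extend by zero from a single component to get the upper bound. The only difference is cosmetic. The paper extends a first eigenfunction of a minimizing component, whereas you extend arbitrary trial functions on $\Omega_j$ and take the infimum, which avoids invoking attainment on $\Omega_j$.
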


\begin{proof}
For $u\in W^{1,p}(\Omega)$, decompose its energy and mass over the components. Each component energy is at least $\lambda_{1,\beta}(\Omega_j)$ times the corresponding mass, so the quotient is bounded below by the minimum component eigenvalue. Conversely, extend a first eigenfunction of a minimizing component by zero to all other components.
\end{proof}

\begin{theorem}[Bucur--Daners inequality]
Let $\Omega\subset\R^N$ be a bounded Lipschitz open set satisfying the standing finite-component convention, and let $|B_R|=|\Omega|$. Then
\[
\lambda_{1,\beta}(B_R)\le\lambda_{1,\beta}(\Omega).
\]
\end{theorem}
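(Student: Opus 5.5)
The proof reduces to the connected case and then runs a volume-monotonicity argument on balls. Let $\Omega_1,\dots,\Omega_k$ be the connected components of $\Omega$; by the standing convention each $\Omega_j$ is a bounded connected Lipschitz domain. Choose $j$ with $\lambda_{1,\beta}(\Omega)=\lambda_{1,\beta}(\Omega_j)$, which is possible by the component lemma just proved. Let $R_j$ be defined by $|B_{R_j}|=|\Omega_j|$. The connected-domain comparison, which follows from \Cref{thm:weighted-transfer} together with \Cref{prop:unweighted-spectral}, \Cref{lem:unweighted-flux}, and the Euclidean isoperimetric inequality, gives
\[
\lambda_{1,\beta}(B_{R_j})\le\lambda_{1,\beta}(\Omega_j).
\]

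Since the components are disjoint, $|\Omega_j|\le|\Omega|=|B_R|$, and hence $R_j\le R$. If $R_j=R$, nothing further is needed. Otherwise $R_j<R$, and \Cref{lem:unweighted-ball-radius} gives $\lambda_{1,\beta}(B_R)<\lambda_{1,\beta}(B_{R_j})$. Chaining the inequalities yields
\[
\lambda_{1,\beta}(B_R)\le\lambda_{1,\beta}(B_{R_j})\le\lambda_{1,\beta}(\Omega_j)=\lambda_{1,\beta}(\Omega).
\]

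No step is a real obstacle, since all the ingredients are already in place. The only point to check is that the hypotheses of \Cref{thm:weighted-transfer} hold on each component with $m=w=1$:
\begin{enumerate}
\item \Cref{ass:radial-volume-density} and \eqref{eq:positive-radial-density} are trivial for the constant density.
\item The boundary integrability conditions \eqref{eq:boundary-weight-integrable} and \eqref{eq:ball-boundary-weight} reduce to finiteness of $\Haus^{N-1}(\partial\Omega_j)$ and $\Haus^{N-1}(\partial B_{R_j})$.
\item \Cref{ass:double-density} is the classical isoperimetric inequality.
\item The flux assumption \Cref{ass:flux} follows from the radial identities $\mathfrak f_R(r)=\lambda_R\int_0^r s^{N-1}Z^{p-1}\,ds$, the center asymptotics $\theta_R(r)=\lambda_Rr/N+o(r)$ (which give \eqref{eq:radial-center-condition} with $\mathfrak w\equiv1$), and the monotonicity in \Cref{lem:unweighted-flux}.
\end{enumerate}
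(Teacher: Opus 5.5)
Your proposal is correct and follows the paper's proof essentially verbatim. You pick a component realizing $\lambda_{1,\beta}(\Omega)$ via the component lemma, apply the connected comparison from \Cref{thm:weighted-transfer} with $m=w=1$, and conclude with $R_j\le R$ and \Cref{lem:unweighted-ball-radius}; your explicit check of the transfer hypotheses (including the center condition with $\mathfrak w\equiv1$) matches what the paper records in its ``Connected domains'' subsection.
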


\begin{proof}
Choose a component $\Omega_{j_0}$ with
\[
\lambda_{1,\beta}(\Omega)=\lambda_{1,\beta}(\Omega_{j_0}),
\]
and let $|B_{r_0}|=|\Omega_{j_0}|$. The connected comparison gives
\[
\lambda_{1,\beta}(B_{r_0})\le\lambda_{1,\beta}(\Omega_{j_0}).
\]
Since $r_0\le R$, \Cref{lem:unweighted-ball-radius} yields
\[
\lambda_{1,\beta}(B_R)
\le
\lambda_{1,\beta}(B_{r_0})
\le
\lambda_{1,\beta}(\Omega).
\]
\end{proof}

\section{Pure-power verification via known power-weight isoperimetry}

Throughout this section,
\[
N\ge2,\qquad 1<p\le N,\qquad -N<b<0.
\]
Set
\[
m_b(x)=|x|^b,\qquad w_b(x)=|x|^{b/p'},
\]
with arbitrary values at the origin, and define
\[
\mu_b(E)=\int_E|x|^b\,dx,
\qquad
P_b(E)=\int_{\partial^*E}|x|^{b/p'}\,d\Haus^{N-1}.
\]
For $0<\mu_b(E)<\infty$, let $E_b^\sharp$ be the centered ball of the same weighted volume.

For $R>0$,
\[
\mu_b(B_R)=\frac{\sigma_N}{N+b}R^{N+b},
\qquad
P_b(B_R)=\sigma_NR^{N-1+b/p'}.
\]
Hence the centered profile is continuous and strictly increasing.

\subsection{The radial transformation}
Set
\[
\gamma=\frac{N+b}{N}\in(0,1),
\qquad
T(0):=0,
\qquad
T(x)=|x|^{\gamma-1}x\quad(x\ne0).
\]
Then
\begin{equation}
|T(E)|=\gamma\mu_b(E).
\label{eq:power-volume-map}
\end{equation}
Define
\[
a=\frac{b(p-N)}{p(N+b)}.
\]
The assumptions imply $a\ge0$. For a finite-perimeter set $F$, put
\[
\mathcal P_a(F):=\int_{\partial^*F}|y|^a\,d\Haus^{N-1}(y).
\]

The radial transformation converts the compatible weighted perimeter into an increasing-power perimeter. Here a set is separated from the origin if $\inf_{x\in E}|x|>0$.
\begin{lemma}
\label{lem:power-perimeter-reduction}
If $E$ is bounded, has finite perimeter, and is separated from the origin, then
\[
P_b(E)\ge \mathcal P_a(T(E)).
\]
\end{lemma}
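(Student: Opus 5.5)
Since $E$ is separated from the origin and bounded, $T$ restricted to a neighbourhood of $\overline E$ is a smooth diffeomorphism, bi-Lipschitz onto its image. So $T(E)$ is a bounded finite-perimeter set, $\partial^*T(E)=T(\partial^*E)$ up to $\Haus^{N-1}$-null sets, and the normal transforms by the cofactor rule $\nu_{T(E)}(T(x))\propto (DT(x))^{-\top}\nu_E(x)$. The plan is to compare the two perimeter integrals pointwise on $\partial^*E$ through the area formula for the $(N-1)$-rectifiable set $\partial^*E$ under $T$.

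\emph{Tangential Jacobian.} At $x\ne0$ write $\hat x=x/|x|$. Then
\[
DT(x)=|x|^{\gamma-1}\bigl(I+(\gamma-1)\hat x\otimes\hat x\bigr),
\]
with eigenvalue $|x|^{\gamma-1}$ on $\hat x^\perp$ (multiplicity $N-1$) and $\gamma|x|^{\gamma-1}$ in the radial direction. The tangential Jacobian on a hyperplane with unit normal $\nu$ is $J^{\partial^*E}T=\det DT\,|(DT)^{-\top}\nu|$. Put $c=|\nu\cdot\hat x|$. This gives
\[
J^{\partial^*E}T(x)=|x|^{(N-1)(\gamma-1)}\sqrt{(1-c^2)\gamma^2+c^2}.
\]
Since $0<\gamma<1$, the root lies in $[\gamma,1]$, so $J^{\partial^*E}T(x)\le |x|^{(N-1)(\gamma-1)}$, with equality when $\nu$ is radial. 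This is the only inequality in the argument.

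\emph{Exponent bookkeeping.} The area formula gives
\[
\mathcal P_a(T(E))=\int_{\partial^*E}|T(x)|^a J^{\partial^*E}T(x)\,d\Haus^{N-1}(x)\le\int_{\partial^*E}|x|^{\gamma a+(N-1)(\gamma-1)}\,d\Haus^{N-1}.
\]
It then remains to check that $\gamma a+(N-1)(\gamma-1)=b/p'$. Here $\gamma a=b(p-N)/(Np)$ and $(N-1)(\gamma-1)=(N-1)b/N$. Their sum is $\frac bN\bigl(\frac{p-N}{p}+N-1\bigr)=\frac bN\cdot\frac{N(p-1)}{p}=\frac b{p'}$, which yields $P_b(E)\ge\mathcal P_a(T(E))$.

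\emph{Main obstacle.} The delicate step is justifying the measure-theoretic facts: that $T(E)$ has finite perimeter, that its reduced boundary is $T(\partial^*E)$ up to null sets, and that the area formula applies with this tangential Jacobian. I would handle this by invoking the standard invariance of sets of finite perimeter under $C^1$ diffeomorphisms, for example via \cite{Maggi}. This uses crucially that $E$ stays a positive distance from the origin, where $T$ fails to be Lipschitz because $\gamma<1$. The eigenvalue computation for the Jacobian is elementary but should be written with care, since the direction of the inequality depends on $\gamma<1$.
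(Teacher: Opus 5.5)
Your proposal is correct and follows the paper's proof essentially line by line. In both, $T$ is bi-Lipschitz away from the origin, the tangential Jacobian $|x|^{(N-1)(\gamma-1)}\sqrt{\nu_r^2+\gamma^2(1-\nu_r^2)}$ is bounded by $|x|^{(N-1)(\gamma-1)}$ because $0<\gamma<1$, and the area formula with $\gamma a+(N-1)(\gamma-1)=b/p'$ gives the claim; you simply write out the cofactor computation and exponent arithmetic that the paper leaves implicit.
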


\begin{proof}
On a suitable annulus, $T$ and $T^{-1}$ are bi-Lipschitz. If $\nu_r=\nu_E(x)\cdot x/|x|$, the tangential Jacobian is
\[
J_{N-1}T(x,\nu_E)
=
|x|^{(N-1)(\gamma-1)}
\sqrt{\nu_r^2+\gamma^2(1-\nu_r^2)}
\le
|x|^{(N-1)(\gamma-1)}.
\]
The final inequality uses $0<\gamma<1$, equivalently $b<0$.
Since
\[
\gamma a+(N-1)(\gamma-1)=\frac b{p'},
\]
the area formula gives the result.
\end{proof}

\begin{theorem}[ABCMP power-weight theorem and its increasing-power specialization]
\label{thm:known-power-perimeter}
Let $k,\ell\in\R$, $\ell+N>0$. Theorem~1.1 of \cite{ABCMP} applies to smooth bounded sets if one of the following holds:
\begin{enumerate}
\item[$\mathrm{(i)}$] $N\ge1$ and $\ell+1\le k$;
\item[$\mathrm{(ii)}$] $N\ge2$, $k\le\ell+1$, and $\ell(N-1)/N\le k\le0$;
\item[$\mathrm{(iii)}$] $N\ge3$, $0\le k\le\ell+1$, and
\[
\frac1{\ell+N}
\ge
\frac1{k+N-1}
-
\frac{(N-1)^2}{N(k+N-1)^3};
\]
\item[$\mathrm{(iv)}$] $N=2$, $k\le\ell+1$, and either
\[
\ell\le0\le k\le\frac13,
\]
or
\[
k\ge\frac13
\qquad\text{and}\qquad
\frac1{\ell+2}
\ge
\frac1{k+1}
-
\frac{16}{27(k+1)^3}.
\]
\end{enumerate}
Under these hypotheses it gives
\[
\int_{\partial F}|x|^k\,d\Haus^{N-1}
\ge
C_{k,\ell,N}^{\mathrm{rad}}
\left(\int_F|x|^\ell\,dx\right)^{(k+N-1)/(\ell+N)},
\]
where
\[
C_{k,\ell,N}^{\mathrm{rad}}
:=
\frac{\displaystyle\int_{\partial B_1}|x|^k\,d\Haus^{N-1}}
{\displaystyle\left(\int_{B_1}|x|^\ell\,dx\right)^{(k+N-1)/(\ell+N)}}.
\]
Equality holds for centered balls. In particular, setting $\ell=0$ shows that, for every $a\ge0$, every bounded finite-perimeter set $F\subset\R^N$ with $0<|F|<\infty$ satisfies
\[
\mathcal P_a(F)
\ge
\sigma_N\left(\frac{|F|}{\omega_N}\right)^{(N-1+a)/N}.
\]
Equivalently, centered balls minimize $\mathcal P_a$ under fixed Lebesgue volume.
\end{theorem}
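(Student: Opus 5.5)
The plan is to take the cited result of \cite{ABCMP} as given for smooth bounded sets, and to prove two things ourselves. First, the specialization $\ell=0$, $k=a\ge0$ always falls under one of the cases (i)--(iv). Second, the inequality passes from smooth bounded sets to bounded finite-perimeter sets. The explicit form of the constant is routine: with $\ell=0$ one has $\int_{B_1}|x|^0\,dx=\omega_N$ and $\int_{\partial B_1}|x|^a\,d\Haus^{N-1}=\sigma_N$. Hence $C^{\mathrm{rad}}_{a,0,N}|F|^{(N-1+a)/N}=\sigma_N(|F|/\omega_N)^{(N-1+a)/N}$, and the right-hand side equals $\mathcal P_a$ of the centered ball with $|B|=|F|$, because $\mathcal P_a(B_\rho)=\sigma_N\rho^{N-1+a}$. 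This yields the ``equivalently'' formulation.

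\textbf{Case check.} If $a\ge1$, case (i) applies since $\ell+1=1\le k$. If $a=0$, case (ii) applies with $k=\ell=0$; this is just the Euclidean isoperimetric inequality. It remains to treat $0<a<1$.
\begin{itemize}
\item For $N\ge3$, use case (iii) and put $s=a+N-1\in(N-1,N)$. Clearing denominators, the condition $1/N\ge 1/s-(N-1)^2/(Ns^3)$ becomes $s^2-Ns+(N-1)^2\ge0$. This quadratic has discriminant $N^2-4(N-1)^2=(2-N)(3N-2)<0$ for $N\ge3$, so it holds for every $s$.
\item For $N=2$ and $0<a\le1/3$, the first alternative in (iv) applies, since $\ell=0\le a\le1/3$.
\item For $N=2$ and $1/3\le a<1$, put $s=a+1\in[4/3,2)$. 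The second alternative reduces to $g(s):=s^3-2s^2+32/27\ge0$. One checks $g(4/3)=0$ and $g'(s)=s(3s-4)\ge0$ for $s\ge4/3$, so $g\ge0$ on this range.
\end{itemize}
So every $a\ge0$ is covered, and the inequality holds for smooth bounded sets.

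\textbf{Approximation.} Let $F$ be bounded, of finite perimeter, with $|F|>0$. Mollify $\chi_F$ to obtain $f_\varepsilon=\chi_F*\eta_\varepsilon\in C_c^\infty(B_{R_0})$, where $B_{R_0}$ is a fixed ball containing every $\varepsilon$-neighbourhood of $F$ with $\varepsilon\le1$.
\begin{itemize}
\item Since $|D f_\varepsilon|=|(D\chi_F)*\eta_\varepsilon|\le|D\chi_F|*\eta_\varepsilon$ and $g(y)=|y|^a$ is continuous on $\overline{B_{R_0}}$, we get $\limsup_\varepsilon\int g\,|Df_\varepsilon|\le\int g\,d|D\chi_F|=\mathcal P_a(F)$, because $g*\eta_\varepsilon\to g$ uniformly there.
\item By the weighted coarea formula and Sard's theorem, for each $\varepsilon$ there is a regular value $t_\varepsilon$ in a fixed compact interval $[\delta,1-\delta]$ such that $F_\varepsilon=\{f_\varepsilon>t_\varepsilon\}$ is a smooth bounded set with $\mathcal P_a(F_\varepsilon)\le (1-2\delta)^{-1}\int g\,|Df_\varepsilon|$.
\item Since $f_\varepsilon\to\chi_F$ in $L^1$, Chebyshev's inequality gives $|F_\varepsilon\triangle F|\to0$, so $|F_\varepsilon|\to|F|$.
\end{itemize}
Applying the smooth inequality to $F_\varepsilon$, letting $\varepsilon\to0$ and then $\delta\to0$, gives $\mathcal P_a(F)\ge\sigma_N(|F|/\omega_N)^{(N-1+a)/N}$.

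\textbf{Main obstacle.} I expect the delicate point to be this approximation step: the weighted perimeters must be controlled from above, not merely lower-semicontinuously. That control comes from the pointwise bound $|Df_\varepsilon|\le|D\chi_F|*\eta_\varepsilon$ together with continuity of the weight on a bounded region, which is where boundedness of $F$ is used. The polynomial checks in the $N=2$ and $N\ge3$ subcases are elementary but must be done carefully, especially the tangency at $s=4/3$.
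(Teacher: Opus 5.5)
Your overall route is the paper's: take \cite[Theorem~1.1]{ABCMP} for smooth bounded sets, check that $\ell=0$, $k=a\ge0$ falls under (i)--(iv), and pass to bounded finite-perimeter sets through smooth superlevel sets chosen by coarea and Sard. However, your $N\ge3$ case check is wrong as written. Multiplying $1/N\ge 1/s-(N-1)^2/(Ns^3)$ by $Ns^3>0$ gives the cubic condition
\[
h(s):=s^3-Ns^2+(N-1)^2\ge0,
\]
not $s^2-Ns+(N-1)^2\ge0$. That quadratic is what one gets with $s^2$ instead of $s^3$ in the last denominator. So the negative-discriminant argument says nothing about case (iii). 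Your conclusion that the condition holds for every $s$ is also false: for $N=4$, $h(5/2)=-3/8<0$.

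What is true, and all you need, is positivity on $[N-1,N]$. It follows exactly as in your $N=2$ computation. First, $h(N-1)=(N-1)^2\bigl((N-1)-N+1\bigr)=0$. Second, $h'(s)=s(3s-2N)\ge0$ for $s\ge N-1$, because $3(N-1)-2N=N-3\ge0$. Hence $h\ge0$ for $0\le a\le1$, with equality at $a=0$. The condition is tight at the Euclidean endpoint, just as the $N=2$ condition is tight at $s=4/3$. The paper avoids this computation by citing the remark after \cite[Theorem~1.1]{ABCMP} that $\ell=0$, $0\le k\le1$ is covered.

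With that repair the argument is complete, and your approximation step is a slightly leaner version of the paper's. The paper takes a strict $BV$ approximation and uses Reshetnyak continuity to get $\int g|\nabla u_j|\,dx\to\mathcal P_a(F)$. It then ties the Sard window $\eta_j$ to the $L^1$ error so that a single sequence works, and uses lower semicontinuity to conclude $\mathcal P_a(F_j)\to\mathcal P_a(F)$. You observe that only an upper bound on the approximating perimeters is needed. You get $\limsup_\varepsilon\int g|\nabla f_\varepsilon|\,dx\le\mathcal P_a(F)$ directly from $|\nabla f_\varepsilon|\le|D\chi_F|*\eta_\varepsilon$ and uniform convergence of $g*\eta_\varepsilon$ on a fixed ball. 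You also replace the diagonal choice by a fixed window $[\delta,1-\delta]$ followed by $\delta\to0$. This removes the need for Reshetnyak continuity and lower semicontinuity.
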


\begin{proof}
The first assertion is precisely \cite[Theorem~1.1]{ABCMP}. When $\ell=0$, cases $\mathrm{(ii)}$--$\mathrm{(iv)}$ cover $0\le a\le1$, as noted explicitly after that theorem, and case $\mathrm{(i)}$ covers $a\ge1$.

It remains to justify the finite-perimeter extension used here. If $\mathcal P_a(F)=\infty$, the conclusion is immediate. Otherwise, choose a ball $B_{R_*}$ containing $F$. Because $g(x):=|x|^a$ is continuous and bounded on that ball, strict $BV$ approximation with support in a fixed slightly larger ball, together with Reshetnyak continuity \cite{AFP,Maggi}, provides $u_j\in C_c^\infty(\R^N)$, $0\le u_j\le1$, such that
\[
u_j\to\chi_F\quad\text{in }L^1,
\qquad
\int_{\R^N}g|\nabla u_j|\,dx\to\mathcal P_a(F).
\]
Choose $\eta_j\downarrow0$ so that
$\|u_j-\chi_F\|_{L^1}/\eta_j\to0$. By coarea and Sard's theorem there is a regular value $t_j\in(\eta_j,1-\eta_j)$ such that, for $F_j:=\{u_j>t_j\}$,
\[
\mathcal P_a(F_j)
\le\frac1{1-2\eta_j}
\int_{\R^N}g|\nabla u_j|\,dx.
\]
Moreover,
\[
|F_j\triangle F|
\le\frac{\|u_j-\chi_F\|_{L^1}}
{\min\{t_j,1-t_j\}}
\longrightarrow0.
\]
Lower semicontinuity of weighted total variation gives
$\mathcal P_a(F)\le\liminf_j\mathcal P_a(F_j)$, while the preceding bound gives the reverse limsup. Thus $|F_j|\to|F|$ and $\mathcal P_a(F_j)\to\mathcal P_a(F)$. Applying the smooth inequality to $F_j$ and passing to the limit proves the displayed finite-perimeter inequality.
\end{proof}

\begin{theorem}[Pure-power double-density inequality]
\label{thm:power-double-density}
Every bounded finite-perimeter set $E$ with $0<\mu_b(E)<\infty$ satisfies
\[
P_b(E)\ge P_b(E_b^\sharp).
\]
\end{theorem}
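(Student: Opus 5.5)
The plan is to reduce the statement to \Cref{thm:known-power-perimeter} through the radial map $T$, first for sets separated from the origin, and then for general bounded finite-perimeter sets by approximation.

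\emph{Step 1 (separated sets).} Let $E$ be bounded, of finite perimeter, and separated from the origin. By \Cref{lem:power-perimeter-reduction} and \eqref{eq:power-volume-map},
\[
P_b(E)\ge\mathcal P_a(T(E)),
\qquad
|T(E)|=\gamma\mu_b(E).
\]
Since $T$ is bi-Lipschitz on annuli, $T(E)$ is again bounded and of finite perimeter. \Cref{thm:known-power-perimeter} with $a\ge0$ then gives
\[
\mathcal P_a(T(E))\ge\sigma_N\bigl(|T(E)|/\omega_N\bigr)^{(N-1+a)/N}.
\]
Next I check that the right-hand side equals $P_b(E_b^\sharp)$. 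The map $T$ sends $B_\rho$ to $B_{\rho^\gamma}$. For the ball, $\nu_r=1$, so the tangential Jacobian inequality is an equality, and hence $P_b(B_\rho)=\mathcal P_a(B_{\rho^\gamma})$. The area formula on the sphere confirms this:
\[
\sigma_N\rho^{\gamma(N-1+a)}=\sigma_N\rho^{N-1+b/p'},
\]
using the identity $\gamma a+(N-1)(\gamma-1)=b/p'$. Taking $\rho$ with $\mu_b(B_\rho)=\mu_b(E)$ gives
\[
|B_{\rho^\gamma}|=\gamma\mu_b(B_\rho)=|T(E)|.
\]
So the Euclidean-volume ball for $T(E)$ is exactly $T(E_b^\sharp)$, and the chain closes. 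The boundary of $B_\rho$ is separated from the origin, so \Cref{lem:power-perimeter-reduction} applies to it directly.

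\emph{Step 2 (removing separation).} For a general bounded $E$, assume $P_b(E)<\infty$; otherwise there is nothing to prove. Consider $E_\varepsilon:=E\setminus \overline{B_\varepsilon}$. By the coarea formula applied to $|x|$ over the sets $E\cap\partial B_\varepsilon$, for almost every $\varepsilon$,
\[
P_b(E_\varepsilon)\le\int_{\partial^*E\setminus \overline{B_\varepsilon}}|x|^{b/p'}\,d\Haus^{N-1}+\int_{E\cap\partial B_\varepsilon}|x|^{b/p'}\,d\Haus^{N-1},
\]
with the usual $\Haus^{N-1}$-a.e. identification of reduced boundaries of intersections with balls. The last term is at most $\sigma_N\varepsilon^{N-1+b/p'}$. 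This tends to $0$ because $N-1+b/p'>0$, which holds since $b>-N$ and $p'>1$ give $b/p'>-N/p'\ge -(N-1)\cdot\tfrac{N}{N-1}\cdot\tfrac1{p'}$; I will verify the clean form of this inequality: $b/p'>-N(p-1)/p\ge-(N-1)$ when $p\le N$. The first term is at most $P_b(E)$. Meanwhile $\mu_b(E_\varepsilon)\to\mu_b(E)$ by monotone convergence, and $\mu_b(E_\varepsilon)>0$ for small $\varepsilon$. Step 1 then gives
\[
P_b(E_\varepsilon)\ge\sigma_N\varrho(\mu_b(E_\varepsilon))^{N-1+b/p'},
\]
and the right-hand side is continuous in the volume. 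Letting $\varepsilon\to0$ along good values yields $P_b(E)\ge P_b(E_b^\sharp)$.

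\emph{Main obstacle.} I expect the hardest part to be the slicing step: justifying that, for almost every $\varepsilon$, $\partial^*(E\setminus\overline{B_\varepsilon})$ is contained, up to $\Haus^{N-1}$-null sets, in $(\partial^*E\setminus\overline{B_\varepsilon})\cup(E^{(1)}\cap\partial B_\varepsilon)$. I would cite the standard intersection-with-balls result for sets of finite perimeter (Maggi). I would also use that the weight $|x|^{b/p'}$ is bounded away from the origin, so weighted and unweighted finiteness agree there. The exponent check $N-1+b/p'>0$ uses $p\le N$ together with $b>-N$. It is already implicit in $J(v)<\infty$, since $P_b(B_R)=\sigma_NR^{N-1+b/p'}$.
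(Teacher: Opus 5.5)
Your proposal is correct and follows the paper's proof essentially step for step. You reduce via $T$, \Cref{lem:power-perimeter-reduction} and \Cref{thm:known-power-perimeter} for sets separated from the origin, with $T(B_\rho)=B_{\rho^\gamma}$ and $\gamma(N-1+a)=N-1+b/p'$ closing the chain. You then truncate to $E\setminus\overline{B_\varepsilon}$, where the spherical term $\sigma_N\varepsilon^{N-1+b/p'}$ vanishes because $N-1+b/p'>N/p-1\ge0$. Two side remarks are inaccurate, though neither is used. First, \Cref{lem:power-perimeter-reduction} does not literally apply to $B_\rho$, which is not separated from the origin; your direct sphere computation suffices instead. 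Second, positivity of $N-1+b/p'$ is not implied by $J(v)<\infty$, since $\sigma_N R^{N-1+b/p'}$ is finite for every exponent when $R>0$.
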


\begin{proof}
If $P_b(E)=+\infty$, the conclusion is immediate. We therefore assume $P_b(E)<\infty$.

Assume first that $E$ is separated from the origin and set $F=T(E)$. By \Cref{lem:power-perimeter-reduction} and \Cref{thm:known-power-perimeter},
\[
P_b(E)\ge \mathcal P_a(F)\ge \mathcal P_a(B_\rho),
\]
where $|B_\rho|=|F|$. If $E_b^\sharp=B_R$, then \eqref{eq:power-volume-map} gives $\rho=R^\gamma$, and
\[
\gamma(N-1+a)=N-1+\frac b{p'}.
\]
Therefore $\mathcal P_a(B_\rho)=P_b(B_R)$.

For a general $E$, take $E_\varepsilon=E\setminus\overline{B_\varepsilon}$ at radii for which the truncation has finite perimeter. Then
\[
P_b(E_\varepsilon)
\le
\int_{\partial^*E\cap\{|x|>\varepsilon\}}|x|^{b/p'}\,d\Haus^{N-1}
+
\sigma_N\varepsilon^{N-1+b/p'}.
\]
Here
\[
N-1+\frac b{p'}
>
N-1-\frac N{p'}
=\frac Np-1
\ge0,
\]
where the strict inequality uses $b>-N$ and the last one uses $p\le N$. Hence, along a sequence $\varepsilon_j\downarrow0$,
\[
\limsup_jP_b(E_{\varepsilon_j})\le P_b(E),
\qquad
\mu_b(E_{\varepsilon_j})\to\mu_b(E).
\]
Apply the separated-set result to $E_{\varepsilon_j}$ and pass to the limit using continuity of the centered profile.
\end{proof}

\begin{remark}
The theorem is a direct specialization of known power-weight isoperimetry and is not claimed as a new geometric result. At $p=N$, one has $a=0$ and the transformed inequality becomes the ordinary Euclidean isoperimetric inequality; the spectral theory in \Cref{sec:spectral} covers this endpoint as well.
\end{remark}

\section{Relation with the weighted Robin results of Amato--Chiacchio--Gentile}

Amato--Chiacchio--Gentile \cite{ACG} study weighted $p$-Poisson equations with variable Robin boundary coefficients. Their standing hypotheses $\mathrm{(H_1)}$--$\mathrm{(H_3)}$ are
\[
p\ge N,
\qquad
-N<\ell<0,
\qquad
0<\inf_{\partial\Omega}\beta(x)\le
\sup_{\partial\Omega}\beta(x)<\infty,
\]
where $\Omega$ is bounded and Lipschitz and $0\notin\partial\Omega$. Their problem is
\begin{equation*}
\begin{cases}
-\Delta_pu=f(x)|x|^\ell&\text{in }\Omega,\\
|\nabla u|^{p-2}\partial_\nu u+\beta(x)|u|^{p-2}u=0&\text{on }\partial\Omega.
\end{cases}
\end{equation*}
Their weighted volume and perimeter are
\[
|E|_\ell=\int_E|x|^\ell\,dx,
\qquad
P_{\ell/p'}(E)=\int_{\partial^*E}|x|^{\ell/p'}\,d\Haus^{N-1}.
\]
Thus, after setting $b=\ell$, the underlying double-density pair is the same formal pair
\[
m_b(x)=|x|^b,\qquad w_b(x)=|x|^{b/p'}.
\]
They define
\[
\widetilde\beta
:=\inf_{x\in\partial\Omega}
\beta(x)|x|^{-\ell/p'}.
\]
Their Theorem~1.3 states
\[
\lambda_{1,\beta(x)}(\Omega)
\ge
\lambda_{1,\widetilde\beta}(\Omega^\sharp),
\]
where the comparison ball carries the boundary coefficient
$\widetilde\beta R^{\ell/p'}$. For the special choice
$\beta(x)=\beta|x|^{\ell/p'}$, one has $\widetilde\beta=\beta$, so both Rayleigh quotients and the comparison conclusion agree exactly with the compatible quotient considered here. Their geometric input is the same double-density inequality, used as \cite[Theorem~2.1]{ACG} and traced there to the power-weight theory including \cite{ABCMP}.

The objectives and proof mechanisms are different. Amato--Chiacchio--Gentile derive weighted Talenti-type solution comparisons and then the eigenvalue consequence, and their framework allows a variable Robin coefficient. The present paper instead isolates a Bossel--Daners level-set transfer based on the exact $BV$ superlevel perimeter formula, a plateau-safe rank rearrangement, and a radial flux analysis on the comparison ball.

The exponent regimes are also different. Our pure-power theorem is established independently for
\[
1<p\le N,\qquad -p<b<0,
\]
by combining the geometric and spectral inputs established above with a direct ball-flux analysis.

For $1<p<N$, this is complementary to the hypothesis $p\ge N$ in \cite{ACG}. At the only shared exponent, $p=N$, and under the intersecting domain hypotheses, the two theorems give the same inequality for $\beta(x)=\beta|x|^{b/p'}$. The additional conclusion here at that endpoint is that the same inequality remains valid when $0\in\partial\Omega$, a contact excluded in \cite{ACG}; the proof is also independent of their Talenti comparison.

\begin{remark}
The double-density inequality is a known geometric input shared by the two approaches. The contribution here is the abstract weighted Bossel--Daners transfer and its combination with the direct proof of strict pure-power radial flux monotonicity.
\end{remark}

\section{The pure-power Robin Faber--Krahn inequality}

Throughout this section,
\begin{equation*}
N\ge2,\qquad 1<p\le N,\qquad -p<b<0.
\end{equation*}
Set
\begin{equation*}
m_b(x):=|x|^b,
\qquad
w_b(x):=|x|^{b/p'}
\qquad(x\ne0),
\end{equation*}
with arbitrary finite values at the origin when the weights are viewed as Lebesgue representatives.

For every bounded Lipschitz open set $\Omega\subset\R^N$, define
\begin{equation*}
\lambda_{1,\beta}^{b}(\Omega)
:=
\inf_{0\ne u\in W^{1,p}(\Omega)}
\frac{
\displaystyle\int_\Omega|\nabla u|^p\,dx
+
\beta\int_{\partial\Omega}|x|^{b/p'}|\operatorname{Tr}u|^p\,d\Haus^{N-1}}
{\displaystyle\int_\Omega|x|^b|u|^p\,dx}.
\end{equation*}

\subsection{Radial weak-solution structure}

Let $B_R$ be a centered ball and write
\[
\lambda_R:=\lambda_{1,\beta}^{b}(B_R).
\]
By \Cref{cor:power-spectral}, the first eigenvalue is attained by a positive bounded eigenfunction and is simple. Since the domain and both densities are rotationally invariant, \Cref{cor:radiality-ball} implies that the positive first eigenfunction is radial. Let
\[
z(x)=Z(|x|)
\]
be the radial representative, normalized by
\begin{equation*}
\|z\|_{L^\infty(B_R)}=1.
\end{equation*}
It satisfies
\begin{align}
&\int_{B_R}
|\nabla z|^{p-2}\nabla z\cdot\nabla\zeta\,dx
+
\beta\int_{\partial B_R}
|x|^{b/p'}z^{p-1}\zeta\,d\Haus^{N-1}
=
\lambda_R\int_{B_R}|x|^bz^{p-1}\zeta\,dx
\label{eq:power-ball-problem}
\end{align}
for every $\zeta\in W^{1,p}(B_R)$.

The weak radial equation can be written in integrated flux form.
\begin{lemma}
\label{lem:power-radial-regularity}
The radial representative $Z$ is locally absolutely continuous on $(0,R]$. The distributional radial flux
\begin{equation*}
\mathfrak f(r):=-r^{N-1}|Z'(r)|^{p-2}Z'(r)
\end{equation*}
admits an absolutely continuous representative on $[0,R]$ and satisfies
\begin{equation}
\mathfrak f(r)
=
\lambda_R\int_0^r
s^{N-1+b}Z(s)^{p-1}\,ds
\qquad(0\le r\le R).
\label{eq:power-radial-integral}
\end{equation}
Consequently,
\begin{equation*}
\mathfrak f(0)=0,
\qquad
\mathfrak f'(r)=\lambda_Rr^{N-1+b}Z(r)^{p-1}
\quad\text{for a.e. }r\in(0,R).
\end{equation*}
Moreover,
\begin{equation*}
Z'(r)<0
\qquad(0<r\le R),
\end{equation*}
$Z$ is strictly decreasing on $(0,R]$,
\begin{equation}
\lim_{r\downarrow0}Z(r)=1,
\label{eq:power-center-limit}
\end{equation}
and
\begin{equation}
\mathfrak f(R)
=
\beta R^{N-1+b/p'}Z(R)^{p-1},
\qquad
Z(R)>0.
\label{eq:power-boundary-flux}
\end{equation}
\end{lemma}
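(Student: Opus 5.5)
We test the weak equation \eqref{eq:power-ball-problem} with radial functions and reduce it to a one-dimensional statement about $Z$. Since $z\in W^{1,p}(B_R)$ is radial, polar coordinates give $Z\in W^{1,p}_{\mathrm{loc}}((0,R])$ with $\int_0^R r^{N-1}|Z'|^p\,dr<\infty$. In particular $Z$ is locally absolutely continuous on $(0,R]$.

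For the interior equation, take $\zeta(x)=\eta(|x|)$ with $\eta\in C_c^\infty((0,R))$. Then \eqref{eq:power-ball-problem} reads
\[
-\int_0^R\mathfrak f\,\eta'\,dr=\lambda_R\int_0^R r^{N-1+b}Z^{p-1}\eta\,dr .
\]
The right-hand density $g(r):=\lambda_R r^{N-1+b}Z(r)^{p-1}$ is in $L^1(0,R)$, because $Z\le1$ and $N+b>0$. Hence $\mathfrak f'=g$ in $\mathcal D'(0,R)$, so
\[
\mathfrak f(r)=c+\lambda_R\int_0^r s^{N-1+b}Z^{p-1}\,ds
\]
for some constant $c$, and this representative is absolutely continuous on $[0,R]$.

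The main obstacle is showing $c=0$. Here $p\le N$ must enter. Suppose $c\neq0$. Then $\mathfrak f(r)\to c$ as $r\downarrow0$, so $|Z'(r)|^{p-1}\sim|c|\,r^{1-N}$ and $|Z'(r)|^p\sim|c|^{p'}r^{-(N-1)p'}$. This gives
\[
r^{N-1}|Z'|^p\sim |c|^{p'}r^{(N-1)(1-p')}=|c|^{p'}r^{-(N-1)/(p-1)}.
\]
Since $p\le N$ gives $(N-1)/(p-1)\ge1$, this is not integrable at $0$. That contradicts $\nabla z\in L^p(B_R)$, so $c=0$ and \eqref{eq:power-radial-integral} holds.

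Note that the same argument for $p>N$ would only give that $Z$ is unbounded, and for $p=N$ this is exactly the borderline $r^{-1}$. Both regimes are still covered by the $L^p$ argument above. As a second route, one can test with a cutoff $\zeta=\eta_\varepsilon(|x|)$ equal to $1$ near the origin. The capacity of a point is zero when $p\le N$, so the boundary term at $0$ vanishes as $\varepsilon\to0$, which again gives $c=0$.

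The remaining conclusions follow from the integral formula. Positivity of $z$ (\Cref{cor:power-spectral}) makes the integrand positive, so $\mathfrak f(r)>0$ for $r>0$ and hence $Z'<0$ a.e. Since $\mathfrak f$ is continuous and positive, $|Z'|=(\mathfrak f/r^{N-1})^{1/(p-1)}$ is continuous on $(0,R]$. Thus $Z\in C^1((0,R])$ and $Z'(r)<0$ pointwise, so $Z$ is strictly decreasing. Its supremum is then the limit at $0$, and the normalization $\|z\|_\infty=1$ gives \eqref{eq:power-center-limit}.

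For the boundary identity, take radial $\zeta=\eta(|x|)$ with $\eta\in C^1([0,R])$ and $\eta\equiv1$ near $R$, and integrate $\mathfrak f'\eta$ by parts. Using \eqref{eq:power-radial-integral}, the interior terms cancel and we are left with
\[
\sigma_N\mathfrak f(R)=\beta\,\sigma_N R^{N-1+b/p'}Z(R)^{p-1},
\]
which is \eqref{eq:power-boundary-flux}. Finally, if $Z(R)=0$ this identity would force $\mathfrak f(R)=0$, contradicting $\mathfrak f(R)>0$; hence $Z(R)>0$.
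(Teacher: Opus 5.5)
Your proof is correct and follows the paper's argument essentially step for step: the distributional flux identity from radial test functions supported in $(0,R)$, elimination of the integration constant via non-integrability of $r^{-(N-1)/(p-1)}$ when $p\le N$, positivity and continuity of $\mathfrak f$ giving $Z\in C^1((0,R])$ with $Z'<0$, and the boundary identity from radial test functions equal to $1$ near $R$. Your aside about $p>N$ is wrong, since there $|Z'|\sim r^{-(N-1)/(p-1)}$ is integrable, so $Z$ would stay bounded and the $L^p$ argument fails; the capacity remark in your second route is also unnecessary, because testing directly with a radial function equal to $1$ near the origin already gives $c=0$, but neither point affects the lemma, which only concerns $p\le N$.
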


\begin{proof}
Since $z\in W^{1,p}(B_R)$ is radial, $Z$ is absolutely continuous on every compact subinterval of $(0,R]$. Taking radial test functions compactly supported in $(0,R)$ in \eqref{eq:power-ball-problem} gives, in distributions,
\[
\mathfrak f'(r)=\lambda_Rr^{N-1+b}Z(r)^{p-1}.
\]
The right-hand side belongs to $L^1(0,R)$ because $b>-N$ and $Z\in L^\infty(0,R)$. Thus the distributional flux admits an absolutely continuous representative with a finite one-sided limit at zero, and
\[
\mathfrak f(r)=C+
\lambda_R\int_0^r s^{N-1+b}Z(s)^{p-1}\,ds
\]
for some constant $C$.

If $C\ne0$, then $|\mathfrak f(r)|\ge|C|/2$ near zero, and
\[
|Z'(r)|^pr^{N-1}
=
|\mathfrak f(r)|^{p'}r^{-(N-1)/(p-1)}
\ge
c r^{-(N-1)/(p-1)}.
\]
Since $p\le N$, one has $(N-1)/(p-1)\ge1$, so the right-hand side is not integrable near zero. This contradicts $z\in W^{1,p}(B_R)$. Hence $C=0$, proving \eqref{eq:power-radial-integral}. This is the precise point at which $p\le N$ enters the radial integration.

Since $Z$ is continuous on compact subintervals of $(0,R]$, \eqref{eq:power-radial-integral} gives $\mathfrak f\in C^1((0,R])$. The identity
\[
Z'(r)
=-
\left(\frac{\mathfrak f(r)}{r^{N-1}}\right)^{1/(p-1)}
\quad\text{for a.e. }r\in(0,R)
\]
therefore provides a continuous representative of $Z'$ on $(0,R]$ and a $C^1$ representative of $Z$ there. The right-hand side of \eqref{eq:power-radial-integral} is strictly positive for $r>0$, so, for this representative,
\[
Z'(r)
=-\left(\frac{\mathfrak f(r)}{r^{N-1}}\right)^{1/(p-1)}<0
\qquad(0<r\le R).
\]
For $0<r_1<r_2\le R$,
\[
Z(r_2)-Z(r_1)
=-\int_{r_1}^{r_2}
\left(\frac{\mathfrak f(s)}{s^{N-1}}\right)^{1/(p-1)}ds<0,
\]
which proves strict decrease. Since $Z$ is positive, bounded, and decreasing, its limit at zero exists and equals its essential supremum; the normalization gives \eqref{eq:power-center-limit}.

To identify the boundary condition, use radial test functions in \eqref{eq:power-ball-problem} which are constant near the origin. Integration by parts, together with $\mathfrak f(0)=0$, yields
\[
\mathfrak f(R)=\beta R^{N-1+b/p'}Z(R)^{p-1}.
\]
If $Z(R)=0$, then $\mathfrak f(R)=0$, contradicting \eqref{eq:power-radial-integral}. Hence $Z(R)>0$.
\end{proof}

The integrated equation determines the center behavior of $Z$ and its flux.
\begin{lemma}
\label{lem:power-center-asymptotics}
As $r\downarrow0$,
\begin{equation}
\mathfrak f(r)
=
\frac{\lambda_R}{N+b}r^{N+b}
+o(r^{N+b}),
\label{eq:power-flux-asymptotic}
\end{equation}
and
\begin{equation}
|Z'(r)|^{p-1}
=
\frac{\lambda_R}{N+b}r^{1+b}
+o(r^{1+b}).
\label{eq:power-derivative-asymptotic}
\end{equation}
Consequently,
\begin{equation}
\mathfrak f(r)Z(r)^{1-p}
=o\bigl(r^{N-1+b/p'}\bigr)
\qquad(r\downarrow0).
\label{eq:power-center-flux-condition}
\end{equation}
\end{lemma}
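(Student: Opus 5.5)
The plan is to read everything off the integrated flux identity \eqref{eq:power-radial-integral} together with the center limit \eqref{eq:power-center-limit}, both from \Cref{lem:power-radial-regularity}.

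\emph{Step 1: the flux asymptotic.} Write
\[
\mathfrak f(r)=\lambda_R\int_0^r s^{N-1+b}Z(s)^{p-1}\,ds .
\]
Since $Z(s)\to1$ as $s\downarrow0$, we have $Z(s)^{p-1}=1+\epsilon(s)$ with $\epsilon(s)\to0$. The main part integrates exactly, because $N+b>0$:
\[
\int_0^r s^{N-1+b}\,ds=\frac{r^{N+b}}{N+b}.
\]
The remainder is controlled by
\[
\Bigl|\int_0^r s^{N-1+b}\epsilon(s)\,ds\Bigr|\le \sup_{(0,r)}|\epsilon|\cdot\frac{r^{N+b}}{N+b}=o(r^{N+b}).
\]
This gives \eqref{eq:power-flux-asymptotic}.

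\emph{Step 2: the derivative asymptotic.} Lemma \ref{lem:power-radial-regularity} gives $Z'<0$ on $(0,R]$. Hence
\[
|Z'(r)|^{p-1}=r^{1-N}\mathfrak f(r),
\]
and dividing \eqref{eq:power-flux-asymptotic} by $r^{N-1}$ yields \eqref{eq:power-derivative-asymptotic} immediately.

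\emph{Step 3: the relative center condition.} By \eqref{eq:power-center-limit}, $Z(r)^{1-p}\to1$. Therefore, by Step 1,
\[
\mathfrak f(r)Z(r)^{1-p}=O(r^{N+b}).
\]
It remains to compare exponents:
\[
(N+b)-\Bigl(N-1+\frac b{p'}\Bigr)=1+b-\frac{b(p-1)}{p}=1+\frac bp.
\]
This is strictly positive exactly because $b>-p$. Hence
\[
r^{N+b}=r^{1+b/p}\,r^{N-1+b/p'}=o\bigl(r^{N-1+b/p'}\bigr),
\]
which proves \eqref{eq:power-center-flux-condition}. As a byproduct, $\theta_R(r)=\frac{\lambda_R}{N+b}r^{1+b/p}+o(r^{1+b/p})$, which matches the introduction.

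\emph{Expected obstacle.} None of these steps is delicate. The only point requiring care is that Step 1 uses the genuine limit $Z(s)\to1$, not merely boundedness of $Z$; boundedness alone would give only $O(r^{N+b})$, which would still suffice for Step 3 but not for the exact leading constant. The condition $b>-p$ enters only in the exponent comparison of Step 3.
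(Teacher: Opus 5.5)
Your proposal is correct and follows the paper's proof essentially step for step. The only difference is in the flux asymptotic: the paper rescales $s=r\tau$ and applies dominated convergence, where you bound the remainder $\epsilon(s)$ explicitly. The derivative asymptotic via $|Z'(r)|^{p-1}=r^{1-N}\mathfrak f(r)$ and the exponent comparison $1+b/p>0$ are the same as in the paper.
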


\begin{proof}
By \eqref{eq:power-radial-integral} and \eqref{eq:power-center-limit},
\[
\frac{\mathfrak f(r)}{r^{N+b}}
=
\lambda_R\int_0^1
\tau^{N-1+b}Z(r\tau)^{p-1}\,d\tau
\longrightarrow
\frac{\lambda_R}{N+b}
\]
by dominated convergence. Since
$|Z'(r)|^{p-1}=r^{1-N}\mathfrak f(r)$, \eqref{eq:power-derivative-asymptotic} follows. Finally, $Z(r)\to1$ and
\[
(N+b)-(N-1+b/p')=1+\frac bp>0,
\]
so \eqref{eq:power-center-flux-condition} follows from \eqref{eq:power-flux-asymptotic}.
\end{proof}

\subsection{Automatic monotonicity of the radial flux}

Define
\begin{equation*}
\theta_R(r):=
\frac{|Z'(r)|^{p-1}}
{r^{b/p'}Z(r)^{p-1}}
=
\frac{\mathfrak f(r)}
{r^{N-1+b/p'}Z(r)^{p-1}},
\qquad 0<r\le R.
\end{equation*}

The normalized pure-power radial flux is strictly increasing.
\begin{proposition}
\label{prop:power-flux}
One has
\[
\theta_R\in C([0,R])\cap C^1((0,R]),
\]
with
\begin{equation*}
\theta_R(0)=0,
\qquad
\theta_R(R)=\beta,
\end{equation*}
and
\begin{equation}
\theta_R'(r)>0
\qquad(0<r<R).
\label{eq:power-flux-positive}
\end{equation}
Consequently,
\begin{equation}
0<\theta_R(r)<\beta
\qquad(0<r<R).
\label{eq:power-flux-bounds}
\end{equation}
Thus the pure-power ball eigenfunction verifies the radial ball structure in \Cref{ass:flux}.
\end{proposition}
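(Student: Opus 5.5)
Regularity and endpoint values come first, from \Cref{lem:power-radial-regularity} and \Cref{lem:power-center-asymptotics}. Since $\mathfrak f\in C^1((0,R])$ and $Z\in C^1((0,R])$ with $Z>0$, the formula $\theta_R=\mathfrak f\,r^{-(N-1+b/p')}Z^{1-p}$ shows $\theta_R\in C^1((0,R])$. By \eqref{eq:power-derivative-asymptotic} and \eqref{eq:power-center-limit},
\[
\theta_R(r)=\frac{\lambda_R}{N+b}r^{1+b/p}+o(r^{1+b/p}).
\]
Since $1+b/p>0$, this gives $\theta_R(0)=0$ and continuity at the origin. The value $\theta_R(R)=\beta$ follows directly from \eqref{eq:power-boundary-flux}.

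Next I derive the Riccati-type ODE. Differentiate $\theta_R=\mathfrak f\,r^{-(N-1+b/p')}Z^{1-p}$ using $\mathfrak f'=\lambda_Rr^{N-1+b}Z^{p-1}$ and $Z'/Z=-(r^{b/p'}\theta_R)^{1/(p-1)}$. This yields
\[
\theta_R'=r^{b/p}\bigl[\lambda_R+(p-1)\theta_R^{p'}\bigr]-\Bigl(N-1+\frac b{p'}\Bigr)\frac{\theta_R}{r}.
\]
Here one checks $b-b/p'=b/p$ and $r^{b/p'}\cdot r^{b/(p'(p-1))}\cdot r^{-b/p'}=r^{b/p}$. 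The right side is continuous on $(0,R]$, so $\theta_R\in C^1$, and the ODE holds classically.

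To get positivity near $0$, insert the asymptotics. The two leading terms are $\lambda_Rr^{b/p}$ and $-(N-1+b/p')\frac{\lambda_R}{N+b}r^{b/p}$. Their sum is $\lambda_Rr^{b/p}\frac{1+b/p}{N+b}\,(1+o(1))>0$, because $N+b-(N-1+b/p')=1+b/p$. The term $(p-1)\theta_R^{p'}r^{b/p}$ is of lower order and nonnegative.

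The main obstacle is excluding a first zero of $\theta_R'$. The term $r^{b/p}$ makes a direct second-derivative check messy, so I change variables. Set $\delta=1+b/p>0$ and $t=r^\delta/\delta$, so $dt=r^{b/p}\,dr$, and let $\Theta(t)=\theta_R(r)$. Then
\[
\frac{d\Theta}{dt}=\lambda_R+(p-1)\Theta^{p'}-\frac{N-1+b/p'}{\delta\,t}\Theta.
\]
The coefficient $k:=(N-1+b/p')/\delta$ is positive, since $N-1+b/p'>N-1-N/p'\ge0$ by the estimate in \Cref{thm:power-double-density}.

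Suppose $t_0$ is the first zero of $\Theta'$; before it, $\Theta'>0$. Differentiate the ODE at $t_0$. The terms $p\,\Theta^{p'-1}\Theta'$ and $-k\Theta'/t$ vanish there, leaving $\Theta''(t_0)=k\Theta(t_0)/t_0^2>0$, because $\Theta(t_0)>0$ as $\Theta$ was increasing from $0$. So $t_0$ is a strict local minimum of $\Theta$. This contradicts $\Theta'>0$ immediately to its left, which forces $\Theta(t)<\Theta(t_0)$ there. Twice-differentiability at $t_0$ is available because the right side of the ODE is $C^1$ in $t$ for $t>0$.

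Hence $\Theta'>0$ on $(0,\delta^{-1}R^\delta)$, and so $\theta_R'>0$ on $(0,R)$. Strict monotonicity together with the endpoint values gives \eqref{eq:power-flux-bounds}. Combining this with \Cref{lem:power-radial-regularity} and \eqref{eq:power-center-flux-condition} verifies every item of \Cref{ass:flux}. The first condition in \eqref{eq:radial-center-condition} follows from the second, since $r^{N-1+b/p'}\to0$.
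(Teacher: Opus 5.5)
Your proposal is correct and follows essentially the same route as the paper: you derive the same Riccati-type ODE, use the same substitution $t=r^\delta/\delta$, and reach the same first-zero contradiction via $\Theta''(t_0)=c\,\Theta(t_0)/t_0^2>0$. The differences are cosmetic: you check positivity of the derivative near the origin in the $r$-variable rather than the $t$-variable, and you justify $N-1+b/p'>0$ through $b>-N$ rather than $b>-p$.
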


\begin{proof}
By \Cref{lem:power-radial-regularity}, $\mathfrak f\in C^1((0,R])$, and the identity for $Z'$ gives $Z\in C^1((0,R])$. Thus $\theta_R\in C^1((0,R])$. Differentiation gives
\[
\theta_R'(r)
=
\lambda_Rr^{b/p}
-
\left(N-1+\frac b{p'}\right)\frac{\theta_R(r)}r
+
(p-1)\theta_R(r)\frac{-Z'(r)}{Z(r)}.
\]
Since
\[
\left(\frac{-Z'(r)}{Z(r)}\right)^{p-1}
=r^{b/p'}\theta_R(r),
\]
one obtains
\begin{equation}
\theta_R'(r)
=
r^{b/p}
\left[\lambda_R+(p-1)\theta_R(r)^{p'}\right]
-
\left(N-1+\frac b{p'}\right)\frac{\theta_R(r)}r.
\label{eq:power-flux-ode}
\end{equation}
Set
\[
\delta:=1+\frac bp>0,
\qquad
A:=N-1+\frac b{p'}>N-p\ge0,
\]
and introduce
\[
t:=\frac{r^\delta}{\delta},
\qquad
\Theta(t):=\theta_R(r),
\qquad
c:=\frac A\delta>0,
\qquad
t_R:=\frac{R^\delta}{\delta}.
\]
Then \eqref{eq:power-flux-ode} becomes, for $0<t\le t_R$,
\begin{equation}
\Theta'(t)
=
\lambda_R+(p-1)\Theta(t)^{p'}-\frac ct\Theta(t).
\label{eq:transformed-flux}
\end{equation}
By \Cref{lem:power-center-asymptotics},
\[
\theta_R(r)
=
\frac{\lambda_R}{N+b}r^\delta+o(r^\delta).
\]
Since $r^\delta=\delta t$ and $c+1=(N+b)/\delta$,
\[
\Theta(t)=\frac{\lambda_R}{c+1}t+o(t).
\]
Substituting this expansion into \eqref{eq:transformed-flux} gives
\[
\lim_{t\downarrow0}\Theta'(t)
=
\lambda_R-\frac{c\lambda_R}{c+1}
=
\frac{\lambda_R}{c+1}>0.
\]
Hence $\Theta'>0$ near zero.

Suppose that $t_0\in(0,t_R)$ is the first zero of $\Theta'$. Since the right-hand side of \eqref{eq:transformed-flux} is $C^1$ for $t>0$ and $\Theta(t)>0$, one has $\Theta\in C^2$ away from zero. Differentiating gives
\[
\Theta''(t)
=
\left[(p-1)p'\Theta(t)^{p'-1}-\frac ct\right]\Theta'(t)
+
\frac c{t^2}\Theta(t).
\]
Thus
\[
\Theta''(t_0)=\frac c{t_0^2}\Theta(t_0)>0.
\]
But $\Theta'>0$ immediately to the left of its first zero, so differentiability gives $\Theta''(t_0)\le0$, a contradiction. Therefore $\Theta'(t)>0$ for $0<t<t_R$, and hence \eqref{eq:power-flux-positive} holds.

The center value follows from the asymptotic formula. The boundary identity in \eqref{eq:power-boundary-flux} gives
\[
\theta_R(R)
=
\frac{\mathfrak f(R)}
{R^{N-1+b/p'}Z(R)^{p-1}}
=
\beta.
\]
Strict monotonicity gives \eqref{eq:power-flux-bounds}. The regularity and center conditions in \Cref{ass:flux} follow from \Cref{lem:power-radial-regularity,lem:power-center-asymptotics}.
\end{proof}

\subsection{The pure-power comparison and component reduction}

For a bounded Lipschitz open set $\Omega\subset\R^N$, let $\Omega_b^\sharp$ be the centered ball satisfying
\begin{equation*}
\int_{\Omega_b^\sharp}|x|^b\,dx
=
\int_\Omega|x|^b\,dx.
\end{equation*}

The compatible pure-power eigenvalue on a ball decreases strictly with its radius.
\begin{lemma}
\label{lem:power-ball-radius}
If $0<r<R$, then
\[
\lambda_{1,\beta}^{b}(B_R)
<
\lambda_{1,\beta}^{b}(B_r).
\]
\end{lemma}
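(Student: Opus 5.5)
The plan mirrors the unweighted argument of \Cref{lem:unweighted-ball-radius}: a dilation trial function, now tracking the three power weights. Set $q:=R/r>1$. Let $z_r$ be the positive first eigenfunction on $B_r$ from \Cref{cor:power-spectral}, and define the trial function $v(x):=z_r(x/q)$ on $B_R$. Since $z_r\in W^{1,p}(B_r)$, we have $v\in W^{1,p}(B_R)$. Its trace on $\partial B_R$ is the dilated trace of $z_r$, and its weighted mass is positive. Hence $v$ is admissible in the Rayleigh quotient defining $\lambda_{1,\beta}^{b}(B_R)$.

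Next we compute each term by the change of variables $x=qy$.

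\textbf{Gradient term.} We get
\[
\int_{B_R}|\nabla v|^p\,dx=q^{N-p}\int_{B_r}|\nabla z_r|^p\,dy.
\]

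\textbf{Mass term.} Using $|x|^b=q^b|y|^b$,
\[
\int_{B_R}|x|^b|v|^p\,dx=q^{N+b}\int_{B_r}|y|^b|z_r|^p\,dy.
\]

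\textbf{Boundary term.} Since $d\Haus^{N-1}(x)=q^{N-1}d\Haus^{N-1}(y)$ and $|x|^{b/p'}=q^{b/p'}|y|^{b/p'}$,
\[
\int_{\partial B_R}|x|^{b/p'}|\operatorname{Tr}v|^p\,d\Haus^{N-1}=q^{N-1+b/p'}\int_{\partial B_r}|y|^{b/p'}|\operatorname{Tr}z_r|^p\,d\Haus^{N-1}.
\]

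Dividing by the mass, the relative factors are as follows. The gradient term picks up $q^{-p-b}$. The boundary term picks up $q^{N-1+b/p'-N-b}=q^{-1-b/p}$. Both exponents are strictly negative precisely because $b>-p$: we have $p+b>0$ and $1+b/p=\delta>0$. This is the remark after \Cref{prop:hardy-threshold}. Consequently
\[
\lambda_{1,\beta}^{b}(B_R)\le \frac{q^{-p-b}\int_{B_r}|\nabla z_r|^p+\beta q^{-1-b/p}\int_{\partial B_r}|y|^{b/p'}|z_r|^p}{\int_{B_r}|y|^b|z_r|^p}\le \max\{q^{-p-b},q^{-1-b/p}\}\,\lambda_{1,\beta}^{b}(B_r).
\]

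For the final strict inequality we need $\lambda_{1,\beta}^{b}(B_r)>0$, which \Cref{thm:attainment} supplies. Then $\max\{q^{-p-b},q^{-1-b/p}\}<1$ gives
\[
\lambda_{1,\beta}^{b}(B_R)<\lambda_{1,\beta}^{b}(B_r).
\]
Alternatively, $Z(r)>0$ from \Cref{lem:power-radial-regularity} shows directly that the boundary term is positive.

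The only points needing care are routine. First, the traces commute with dilation; this holds because dilation is a smooth diffeomorphism mapping $\partial B_r$ onto $\partial B_R$. Second, the boundary weight is finite: $|y|^{b/p'}=r^{b/p'}$ is constant on $\partial B_r$. So I do not expect a genuine obstacle. The substantive content is the sign check of the two exponents, which uses $-p<b$.
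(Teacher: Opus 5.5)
Your proof is correct and is essentially the paper's argument: the same dilation $v(x)=z_r(x/q)$ with $q=R/r$, and the same scaling factors $q^{-p-b}$ and $q^{-1-b/p}$, both strictly less than one because $b>-p$. Bounding by $\max\{q^{-p-b},q^{-1-b/p}\}\,\lambda_{1,\beta}^{b}(B_r)$ and invoking $\lambda_{1,\beta}^{b}(B_r)>0$ is just a rephrasing of the paper's remark that the unscaled numerator is positive.
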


\begin{proof}
Set $q:=R/r>1$, let $z_r$ be a first eigenfunction on $B_r$, and define $v(x):=z_r(x/q)$ on $B_R$. A change of variables gives
\begin{align*}
\lambda_{1,\beta}^{b}(B_R)
&\le
\frac{
q^{-p-b}\displaystyle\int_{B_r}|\nabla z_r|^p\,dx
+
\beta q^{-1-b/p}\displaystyle\int_{\partial B_r}|x|^{b/p'}|z_r|^p\,d\Haus^{N-1}}
{\displaystyle\int_{B_r}|x|^b|z_r|^p\,dx}
\\
&<
\lambda_{1,\beta}^{b}(B_r).
\end{align*}
Indeed, $p+b>0$ and $1+b/p>0$, so both scaling factors in the numerator are strictly smaller than one, while the unscaled numerator is positive.
\end{proof}

The pure-power quotient also decomposes componentwise.
\begin{lemma}
\label{lem:power-component-formula}
If $\Omega$ has connected components $\Omega_1,\dots,\Omega_k$, then
\[
\lambda_{1,\beta}^{b}(\Omega)
=
\min_{1\le j\le k}\lambda_{1,\beta}^{b}(\Omega_j).
\]
\end{lemma}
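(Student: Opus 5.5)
The plan follows the unweighted component lemma, and I will make sure every term in the pure-power quotient splits over components. By the standing finite-component convention, the closures $\overline{\Omega_j}$ are pairwise disjoint. Hence $W^{1,p}(\Omega)=\prod_j W^{1,p}(\Omega_j)$ via restriction, $\partial\Omega=\bigsqcup_j\partial\Omega_j$, and $\operatorname{Tr}(u|_{\Omega_j})=(\operatorname{Tr}u)|_{\partial\Omega_j}$. This last identity holds because the trace is local and each $\partial\Omega_j$ has a neighbourhood meeting no other component. For $u\in W^{1,p}(\Omega)$ I therefore write
\[
\mathcal E(u)=\sum_{j=1}^k\mathcal E_j(u|_{\Omega_j}),
\qquad
\mathcal M(u)=\sum_{j=1}^k\mathcal M_j(u|_{\Omega_j}),
\]
where $\mathcal E_j$ and $\mathcal M_j$ are the compatible pure-power energy and mass on $\Omega_j$. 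All terms are finite by the estimates behind \Cref{cor:power-spectral}. Applied to each connected component, that corollary gives $|x|^b\in L^q(\Omega_j)$ and $|x|^{b/p'}\in L^r(\partial\Omega_j)$, which covers a possible contact $0\in\partial\Omega_j$. It also gives finiteness and attainment of $\lambda_{1,\beta}^b(\Omega_j)$.

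For the lower bound, I will take $u$ with $\mathcal M(u)>0$. On every component with $\mathcal M_j(u|_{\Omega_j})>0$, the definition of $\lambda_{1,\beta}^b(\Omega_j)$ gives $\mathcal E_j\ge\lambda_{1,\beta}^b(\Omega_j)\mathcal M_j$. On components with zero mass, I use only $\mathcal E_j\ge0$, which holds since $\beta>0$ and the weights are nonnegative. Summing gives $\mathcal E(u)\ge\min_j\lambda_{1,\beta}^b(\Omega_j)\,\mathcal M(u)$. The one point to note is that the quotient in this section is written over $u\ne0$, not over $\mathcal M(u)>0$. These are equivalent because $|x|^b>0$ almost everywhere, so $u\ne0$ in $W^{1,p}$ forces $\mathcal M(u)>0$ and no degenerate components arise.

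For the upper bound, I choose $j_0$ attaining the minimum and let $u_{j_0}$ be the first eigenfunction on $\Omega_{j_0}$ given by \Cref{cor:power-spectral}. Extending it by zero to the other components yields an element of $W^{1,p}(\Omega)$, since the closures are disjoint. Its trace vanishes on $\partial\Omega_j$ for $j\ne j_0$, so its quotient on $\Omega$ equals $\lambda_{1,\beta}^b(\Omega_{j_0})$. Combining the two bounds proves the lemma.

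I expect no real obstacle. The only step needing care is the locality of the trace and the zero extension, and this is exactly where the pairwise disjointness of the closures in the standing convention is used.
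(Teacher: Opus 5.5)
Your proposal is correct and follows essentially the same route as the paper: you split the energy, boundary term, and mass over the components to bound every quotient below by the smallest component eigenvalue, then use the zero extension of a first eigenfunction on a minimizing component as a competitor. Your extra remarks, on the locality of the trace under pairwise disjoint closures and on the equivalence of $u\ne0$ with positive $|x|^b$-mass, fill in details the paper leaves implicit.
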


\begin{proof}
The weighted mass, gradient energy, and weighted boundary term decompose over the components. Hence every Rayleigh quotient is bounded below by the smallest component eigenvalue. Conversely, a first eigenfunction on a minimizing component, extended by zero to the other components, is an admissible competitor on $\Omega$.
\end{proof}

\begin{theorem}[Pure-power weighted Robin Faber--Krahn inequality]
\label{thm:power-robin-fk}
Let
\[
N\ge2,
\qquad
1<p\le N,
\qquad
-p<b<0,
\qquad
\beta>0.
\]
Then every bounded Lipschitz open set $\Omega\subset\R^N$ satisfying the standing finite-component convention satisfies
\begin{equation*}
\lambda_{1,\beta}^{b}(\Omega_b^\sharp)
\le
\lambda_{1,\beta}^{b}(\Omega).
\end{equation*}
Thus centered balls minimize the first compatible pure-power Robin eigenvalue under prescribed $|x|^b$-weighted volume.
\end{theorem}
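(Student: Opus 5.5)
The proof first settles the connected case with \Cref{thm:weighted-transfer} and then removes connectedness by a component reduction, exactly as in the unweighted Bucur--Daners argument.

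\textbf{Connected case.} Let $\Omega$ be a bounded connected Lipschitz domain and let $B_R=\Omega_b^\sharp$. I take $m=m_b$, whose radial profile is $\mathfrak m(r)=r^b$. This satisfies \Cref{ass:radial-volume-density}, since $V(R)=\sigma_NR^{N+b}/(N+b)$ is finite, continuous and strictly increasing from $0$ to $\infty$ because $b>-N$. It also satisfies \eqref{eq:positive-radial-density}, and $w=m^{1/p'}=w_b$. I then check the four hypotheses of \Cref{thm:weighted-transfer}.
\begin{enumerate}
\item Spectral admissibility on $\Omega$ and on $B_R$ is \Cref{cor:power-spectral}. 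The values assigned at the origin are immaterial, since they only change the weights on null sets; this matters on $\partial\Omega$ only if $0\in\partial\Omega$, and a single point is $\Haus^{N-1}$-null.
\item The boundary integrability conditions \eqref{eq:boundary-weight-integrable} and \eqref{eq:ball-boundary-weight} follow from $w_b\in L^1(\partial\Omega)$, also given by \Cref{cor:power-spectral}, and from $P_b(B_R)=\sigma_NR^{N-1+b/p'}<\infty$.
\item \Cref{ass:double-density} is \Cref{thm:power-double-density}, together with $0<J(v)<\infty$ from the explicit formula for $P_b(B_R)$.
\item \Cref{ass:flux} is supplied by \Cref{lem:power-radial-regularity}, \Cref{lem:power-center-asymptotics} and \Cref{prop:power-flux}.
\end{enumerate}
Applying \Cref{thm:weighted-transfer} gives $\lambda^b_{1,\beta}(B_R)\le\lambda^b_{1,\beta}(\Omega)$.

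\textbf{Main obstacle.} The step needing care is matching \Cref{ass:flux} exactly to the radial lemmas.
\begin{itemize}
\item The flux $\mathfrak f$ is $C^1$ on $(0,R]$, hence lies in $W^{1,1}_{\mathrm{loc}}((0,R])$, and it satisfies \eqref{eq:radial-flux-equation} with $\mathfrak m(r)=r^b$.
\item The second center condition in \eqref{eq:radial-center-condition} is \eqref{eq:power-center-flux-condition}.
\item The first center condition, $\mathfrak f z^{1-p}\to0$, follows from \eqref{eq:power-flux-asymptotic}, because $N+b>0$ and $Z\to1$.
\item $\theta_R$ is continuous and nondecreasing by \Cref{prop:power-flux}, and $Z$ is strictly decreasing and locally absolutely continuous by \Cref{lem:power-radial-regularity}.
\end{itemize}
One further point must be noted explicitly. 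The transfer theorem normalizes $\|u\|_\infty=1$ on $\Omega$ but does not require $0\notin\partial\Omega$. The pure-power setting is therefore covered even when the origin touches the boundary: the singularity of $w_b$ there is already handled by the $L^r$ estimate in \Cref{cor:power-spectral}.

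\textbf{General case.} Let $\Omega$ have components $\Omega_1,\dots,\Omega_k$. By \Cref{lem:power-component-formula}, choose $j_0$ with $\lambda^b_{1,\beta}(\Omega)=\lambda^b_{1,\beta}(\Omega_{j_0})$. Let $B_{r_0}=(\Omega_{j_0})_b^\sharp$. Since $\mu_b(\Omega_{j_0})\le\mu_b(\Omega)$ and $\mu_b(B_r)$ is strictly increasing in $r$, we have $r_0\le R$. The connected case gives $\lambda^b_{1,\beta}(B_{r_0})\le\lambda^b_{1,\beta}(\Omega_{j_0})$. If $r_0<R$, \Cref{lem:power-ball-radius} gives $\lambda^b_{1,\beta}(B_R)<\lambda^b_{1,\beta}(B_{r_0})$; if $r_0=R$, the two ball eigenvalues are equal. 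Chaining these inequalities yields
\[
\lambda^b_{1,\beta}(\Omega_b^\sharp)\le\lambda^b_{1,\beta}(\Omega).
\]
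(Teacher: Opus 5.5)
Your proposal is correct and follows the paper's proof essentially step for step. You settle the connected case by checking the hypotheses of \Cref{thm:weighted-transfer} through \Cref{cor:power-spectral}, \Cref{thm:power-double-density}, the radial lemmas and \Cref{prop:power-flux}, then remove connectedness with \Cref{lem:power-component-formula} and \Cref{lem:power-ball-radius}, and the extra checks you add (the standing assumptions on $m_b$, the first center condition, the irrelevance of the values at the origin) are details the paper leaves implicit.
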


\begin{proof}
Assume first that $\Omega$ is connected. By \Cref{cor:power-spectral}, the pair $(m_b,w_b)$ is spectrally admissible on $\Omega$ and $\Omega_b^\sharp$, and $w_b\in L^1$ on both boundaries. By \Cref{thm:power-double-density}, the compatible double-density inequality holds for every bounded finite-perimeter set of positive finite $\mu_b$-volume. By \Cref{lem:power-radial-regularity,lem:power-center-asymptotics} and \Cref{prop:power-flux}, the comparison-ball eigenfunction verifies every part of \Cref{ass:flux}. All hypotheses of \Cref{thm:weighted-transfer} are therefore satisfied, proving the comparison in the connected case.

For a general $\Omega$, choose a component $\Omega_{j_0}$ such that, by \Cref{lem:power-component-formula},
\[
\lambda_{1,\beta}^{b}(\Omega)
=
\lambda_{1,\beta}^{b}(\Omega_{j_0}).
\]
Let $(\Omega_{j_0})_b^\sharp=B_{r_0}$ and $\Omega_b^\sharp=B_R$. Since
$\mu_b(\Omega_{j_0})\le\mu_b(\Omega)$, one has $r_0\le R$. The connected comparison and \Cref{lem:power-ball-radius} yield
\[
\lambda_{1,\beta}^{b}(\Omega_b^\sharp)
=
\lambda_{1,\beta}^{b}(B_R)
\le
\lambda_{1,\beta}^{b}(B_{r_0})
\le
\lambda_{1,\beta}^{b}(\Omega_{j_0})
=
\lambda_{1,\beta}^{b}(\Omega).
\]
\end{proof}

The main theorem has the following fixed-volume formulation.
\begin{corollary}
Let $v>0$, and let $B_R$ be the unique centered ball satisfying
\[
\int_{B_R}|x|^b\,dx=v.
\]
Then
\[
\lambda_{1,\beta}^{b}(B_R)
=
\min\left\{
\lambda_{1,\beta}^{b}(\Omega):
\begin{array}{l}
\Omega\subset\R^N\text{ a bounded Lipschitz open set satisfying}\\
\text{the standing finite-component convention},\\[1mm]
\displaystyle\int_\Omega|x|^b\,dx=v
\end{array}
\right\}.
\]
\end{corollary}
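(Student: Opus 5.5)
The corollary has two parts. First, the ball $B_R$ is an admissible competitor in the class. Second, its eigenvalue is a lower bound for every member of the class. So the infimum is attained at $B_R$ and is therefore a minimum.

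For admissibility, note that $B_R$ is a bounded connected Lipschitz domain. It trivially satisfies the finite-component convention, since it has one component. By the choice of $R$, and using the formula $\mu_b(B_R)=\sigma_N R^{N+b}/(N+b)$, it has $\int_{B_R}|x|^b\,dx=v$. This formula is continuous and strictly increasing in $R$, which is what guarantees the existence and uniqueness of $B_R$ claimed in the statement. Hence the value $\lambda_{1,\beta}^{b}(B_R)$ appears in the set over which the minimum is taken.

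For the lower bound, take any bounded Lipschitz open set $\Omega$ in the class. Since $\mu_b(\Omega)=v$, the uniqueness of the centered ball with prescribed $|x|^b$-volume gives $\Omega_b^\sharp=B_R$. Then \Cref{thm:power-robin-fk} yields
\[
\lambda_{1,\beta}^{b}(B_R)=\lambda_{1,\beta}^{b}(\Omega_b^\sharp)\le\lambda_{1,\beta}^{b}(\Omega).
\]
This holds for every $\Omega$ in the class and without any connectedness assumption, because the theorem already handles finite unions through \Cref{lem:power-component-formula} and \Cref{lem:power-ball-radius}.

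The only point needing care is consistency of definitions. The corollary's infimum uses $\lambda^b_{1,\beta}$ as defined in this section, with the infimum over $u\ne0$. The spectral theory was developed with the constraint $\int|x|^b|u|^p>0$. These two formulations agree, because $|x|^b>0$ almost everywhere, so $u\ne0$ in $W^{1,p}$ is equivalent to positive weighted mass. Apart from this check, there is no genuine obstacle: the corollary is a direct repackaging of \Cref{thm:power-robin-fk}.
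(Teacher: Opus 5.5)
Your proof is correct and follows the paper's own argument: every admissible $\Omega$ has $\Omega_b^\sharp=B_R$, so \Cref{thm:power-robin-fk} gives $\lambda_{1,\beta}^{b}(B_R)\le\lambda_{1,\beta}^{b}(\Omega)$. Your explicit checks that $B_R$ itself lies in the class (so the minimum is attained) and that the $u\ne0$ and positive-weighted-mass formulations of the Rayleigh infimum agree are details the paper leaves implicit.
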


\begin{proof}
Every admissible domain of weighted volume $v$ has $B_R$ as its centered comparison ball. Apply \Cref{thm:power-robin-fk}.
\end{proof}

\begin{remark}
The double-density theorem is a known geometric input. The new pure-power conclusion comes from combining it with the abstract weighted transfer and the automatic flux monotonicity in \Cref{prop:power-flux}.
\end{remark}

\begin{remark}
The strict ball-radius monotonicity shows that equality for a disconnected open set is impossible: a minimizing component has strictly smaller weighted volume than the whole set. Thus equality forces connectedness. A rigidity characterization within the connected class is not pursued here.
\end{remark}

\begin{remark}
As proved in \Cref{prop:hardy-threshold}, when $p<N$ and $0\in\Omega$, $b=-p$ is the Hardy critical scale: for $-N<b<-p$ the quotient collapses to zero, while at $b=-p$ the mass embedding loses compactness. When $p=N$, the endpoint weight $|x|^{-N}$ is not locally integrable. The geometric inequality itself remains valid in the larger range $-N<b<0$. The range $p>N$ lies outside the transformation used here because then $a=b(p-N)/(p(N+b))<0$, so \Cref{thm:known-power-perimeter} no longer supplies the required centered-ball inequality. No assertion is made in that regime.
\end{remark}

\end{document}